\newtheorem{thm}{Theorem}[section]
\newtheorem{cor}[thm]{Corollary}
\newtheorem{lem}[thm]{Lemma}
\newtheorem{prop}[thm]{Proposition}
\newtheorem{quest}[thm]{Question}
\newtheorem{prob}[thm]{Problem}
\theoremstyle{definition}
\newtheorem{defn}[thm]{Definition}
\theoremstyle{property}
\theoremstyle{remark}
\newtheorem{rem}[thm]{Remark}
\newtheorem{ex}[thm]{Example}
\numberwithin{equation}{section}
\definecolor{ceruleanblue}{rgb}{0.16, 0.32, 0.75}
\DeclareMathOperator{\A}{\mathscr{A}}
\DeclareMathOperator{\K}{\mathscr{K}}
\def\Div{{\rm Div}}
\def\Bl{{\rm Bl}}
\def\rk{{\rm rk}}
\begin{document}

\title[Dolbeault cohomologies of blowing up complex manifolds II]{Dolbeault cohomologies of blowing up complex manifolds II: bundle-valued case}

\author{Sheng Rao}
\address{School of Mathematics and Statistics, Wuhan University, Wuhan 430072,  P. R. China}
\email{likeanyone@whu.edu.cn, raoshengmath@gmail.com}%

\author{Song Yang}
\address{Center for Applied Mathematics, Tianjin University, Tianjin 300072, P. R. China}%
\email{syangmath@tju.edu.cn}%

\author{Xiangdong Yang}
\address{Department of Mathematics, Chongqing University, Chongqing 401331, P. R. China}
\email{xiangdongyang2009@gmail.com; math.yang@cqu.edu.cn}

\subjclass[2010]{Primary 32S45; Secondary 14E05, 18G40, 14D07}
\keywords{Modifications, resolution of singularities; Rational and birational maps; Spectral sequences, hypercohomology; Variation of Hodge structures}

\date{\today}


\begin{abstract}
  We use a sheaf-theoretic approach to obtain a blow-up formula for Dolbeault cohomology
  groups with values in the holomorphic vector bundle over a compact complex manifold.
  As applications, we present several positive (or negative) examples associated to the vanishing
  theorems of Girbau, Kawamata-Viehweg and Green-Lazarsfeld in a uniform manner and study the blow-up invariance of some classical
  holomorphic invariants.
\end{abstract}

\maketitle

\tableofcontents
\section{Introduction}
\subsection{Background and related works}
In complex and algebraic geometries, one of the most important geometric transformations is blow-up (or blow-down) which provides us with a useful method to construct new complex manifolds or algebraic varieties from the original ones.
In particular, the blow-up operation plays a significant role in bimeromorphic geometry.
For instance, the Kodaira embedding theorem \cite{Kod54} is a remarkable application of the pointed blow-up technique;
the first example of non-algebraic Moishezon threefold and non-projective smooth proper algebraic threefold due to Hironaka \cite{Hir60} are constructed by blowing-up technique; Demailly-Paun's characterization of a manifold in the Fujiki Class ($\mathscr{C}$) in \cite{DP} is established by a finite sequence of blow-ups of the complex manifold with a K\"ahler current along smooth centers to construct a K\"ahler metric on the final blow-up.
According to the celebrated weak factorization theorem \cite{Wlo03,AKMW02}, each bimeromorphic map between compact complex manifolds can be factored into a finite sequence of blow-ups and blow-downs.
This implies that a holomorphic invariant of compact complex manifolds is a bimeromorphic invariant,
if and only if, it is stable under the blow-up transformation.
So from the bimeromorphic geometry point of view,
it is natural to study the variant of holomorphic invariants of compact complex manifolds under the blow-up operations,
or in a more general setting, to compare holomorphic invariants for holomorphic mappings such as the Dolbeault, Bott-Chern cohomology groups and so on.

The first result in this direction
can be traced back at least to Aeppli \cite{Aep56} for proper modifications.
Subsequently, from the viewpoint of algebraic geometry Deligne \cite{Del68} considered the comparison problem of cohomology for proper birational morphisms of smooth schemes over a field $k$ and proved an injectivity result.
In particular, for a proper modification $\pi:\tilde{X}\rightarrow X$
of compact K\"{a}hler manifolds, Grauert-Riemenschneider \cite{GR70} showed that the induced morphism
$$
\pi^{\ast}:H^{p,q}_{\bar{\partial}}(X)
\rightarrow
H^{p,q}_{\bar{\partial}}(\tilde{X}),
\,\,\,\mathrm{for\,\,any}\,\,p,q\geq0
$$
is injective.
In general, let $V$ be a holomorphic vector bundle over a complex manifold $X$.
Then we can define the Dolbeault cohomology groups of $X$ with values in $V$.
Furthermore, the holomorphic map $\pi:\tilde{X}\rightarrow X$ induces a morphism on cohomology groups
\begin{equation}\label{inj-1}
\pi^{\ast}:
H^{p,q}(X,V)
\rightarrow H^{p,q}(\tilde{X},\pi^{\ast}V),
\,\,\,\mathrm{for\,\,\,any}\,\,p,q\geq0.
\end{equation}
In his paper \cite{Wel74},
Wells proved that if $\pi$ is a proper surjective holomorphic mapping of complex manifolds with the same complex dimension, then the pullback of $\pi$ induces an injection on de Rham, Dolbeault cohomology groups. Moreover, the map \eqref{inj-1} for the bundle-valued Dolbeault cohomology groups is also injective.

\subsection{Summary of the results}
This work is motivated by the following problem.
\begin{prob}
Can we describe explicitly the cokernel of the injective morphism \eqref{inj-1}?
\end{prob}

Generally, it seems difficult to give an answer to the above problem for a holomorphic mapping and therefore we consider the special case: the blow-up morphism.
The purpose of this paper is twofold.
First, as a sequel to our work \cite{RYY},
we extend the Dolbeault blow-up formula \cite[Theorem 1.1]{RYY} to the bundle-valued case,
which appears a non-trivial result even for the projective manifolds.
Secondly, the notion of relative Dolbeault sheaves
introduced in \cite{RYY}$_{v3}$
which is a uniform generalization of the ideal and canonical sheaves, is crucial in the proof of blow-up formula on compact complex manifolds
and we believe that these sheaves admit their own independent meanings.
Here we use this kind of sheaves to prove:
\begin{thm}\label{thm1}
Assume that $X$ is a compact complex manifold with complex dimension $n$.
Let $\imath:Z\hookrightarrow X$ be a closed complex submanifold with complex codimension $r\geq 2$ and let $\pi:\tilde{X}\rightarrow X$ be the blow-up of $X$ with the center $Z$.
If $W$ is a holomorphic vector bundle over $X$,
then for any $0\leq p,q\leq n$ there exists a canonical isomorphism
\begin{equation}\label{dol-bl-for}
H^{p,q}(\tilde{X},\pi^{\ast}W)
\stackrel{\phi}\longrightarrow
H^{p,q}(X,W)\oplus
\biggl(\bigoplus^{r-1}_{i=1}
H^{p-i,q-i}(Z,\imath^{\ast}W)\biggr),
\end{equation}
where $\phi$ is a linear map defined in \eqref{phi}.
\end{thm}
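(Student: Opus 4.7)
The plan is to adapt the proof of the untwisted formula \cite[Theorem 1.1]{RYY} to the bundle-valued setting by exploiting the local freeness of $\pi^{*}W$. The three key tools are the relative Dolbeault sheaves introduced in \cite{RYY}, the projection formula, and the projective-bundle structure of the exceptional divisor $E := \pi^{-1}(Z) \cong \mathbb{P}(N_{Z/X})$.

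First, I would reinterpret both sides as hypercohomology on $X$. Since the Dolbeault complex $(\A^{p,\bullet}(\tilde{X},\pi^{*}W),\bar\partial)$ is a fine resolution of $\Omega^{p}_{\tilde{X}}\otimes\pi^{*}W$, one obtains $H^{p,q}(\tilde{X},\pi^{*}W)=\mathbb{H}^{q}\bigl(X,R\pi_{*}(\Omega^{p}_{\tilde{X}}\otimes\pi^{*}W)\bigr)$. Because $\pi^{*}W$ is locally free on $\tilde X$, the projection formula gives $R\pi_{*}(\Omega^{p}_{\tilde{X}}\otimes\pi^{*}W)\cong (R\pi_{*}\Omega^{p}_{\tilde{X}})\otimes W$, reducing the problem to understanding $R\pi_{*}\Omega^{p}_{\tilde{X}}$ tensored with the locally free sheaf $W$.

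Next, I would invoke the sheaf-theoretic machinery from \cite{RYY}: the relative Dolbeault sheaves fit into a short exact sequence (or a distinguished triangle) on $\tilde X$ whose derived pushforward by $\pi$ splits $R\pi_{*}\Omega^{p}_{\tilde X}$ into $\Omega^{p}_{X}$ and direct-image contributions supported on $Z$. Tensoring with $\pi^{*}W$ preserves exactness since $\pi^{*}W$ is flat, and the projection formula transports the splitting to $X$ with the twist by $W$ (and by $\imath^{*}W$ on $Z$). The contribution from the exceptional divisor is then computed via the bundle-valued projective-bundle formula: since $\pi^{*}W|_{E}=(\pi|_{E})^{*}(\imath^{*}W)$, a further application of the projection formula along $\pi|_{E}\colon\mathbb{P}(N_{Z/X})\to Z$, combined with the classical computation of the Dolbeault cohomology of a projective bundle, identifies the $i$-th piece with $H^{p-i,q-i}(Z,\imath^{*}W)$.

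Finally, assembling these pieces via the Leray spectral sequence produces an abstract isomorphism of the desired form; I would then verify that it coincides with the explicit map $\phi$ of \eqref{phi}, defined through pullback and fiber-integration against powers of the tautological class on $E$. The main obstacle I expect lies in this last compatibility step: one must trace the natural maps through several successive projection-formula identifications and connecting homomorphisms, confirming that the resulting isomorphism matches $\phi$ literally rather than merely up to an abstract isomorphism. A second delicate point is to ensure that the twist by $\pi^{*}W$ interacts cleanly with the relative Dolbeault sheaves of \cite{RYY}, so that no additional connection- or curvature-dependent terms creep in at the sheaf level.
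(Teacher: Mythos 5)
Your overall strategy is the paper's: reduce everything to the behaviour of $R\pi_{*}$ on the (relative) Dolbeault sheaves, identify the exceptional contribution via the projective-bundle formula for $E\cong\mathbb{P}(N_{Z/X})$, and assemble with a spectral sequence; this is essentially the combination of Lemmas \ref{key-lem-1}, \ref{hi-dir-0}, \ref{kersheaf-iso} and \ref{dolb-projective-formula} with the Leray argument of Appendix \ref{app-2}. Two differences are worth recording. First, your opening move $R\pi_{*}(\Omega^{p}_{\tilde X}\otimes\pi^{*}W)\cong(R\pi_{*}\Omega^{p}_{\tilde X})\otimes W$ cleanly reduces the twisted higher-direct-image computations to the untwisted isomorphisms \eqref{3-iso}; the paper instead reproves Lemma \ref{key-lem-1} from scratch (via the twisted structure-sheaf and conormal sequences, Bott vanishing and Grauert--Remmert), only remarking afterwards that the projection formula gives a shortcut. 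Your route is shorter, provided the untwisted statements \eqref{3-iso} are justified independently. Second, and more importantly, the step you flag as the main obstacle --- checking that the abstract Leray-type isomorphism literally equals $\phi$ --- is exactly the step the paper does \emph{not} perform. It separates the two tasks: the sheaf-theoretic machinery is used only to prove the numerical coincidence $\dim\mathbb{V}^{p,q}=\dim\mathbb{W}^{p,q}$ (via $\mathrm{coker}\,(\pi^{*})\cong\mathrm{coker}\,(\varpi^{*})$ and Proposition \ref{coker=coker}), and then $\phi=\pi_{*}+\sum_{i}\Pi_{i}\circ\jmath^{*}$ is shown to be injective directly: the current-theoretic pushforward $\pi_{*}$ is a retraction of $\pi^{*}$, so $\ker\,(\pi_{*})\cong\mathrm{coker}\,(\pi^{*})$, and $\jmath^{*}$ is injective on this kernel. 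You should adopt this device; tracing the canonical map through the successive projection-formula and edge-morphism identifications is considerably harder than proving injectivity plus equality of dimensions.

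One further caution: your phrase ``splits $R\pi_{*}\Omega^{p}_{\tilde X}$ into $\Omega^{p}_{X}$ and contributions supported on $Z$'' should not be read as a direct-sum decomposition in the derived category --- that is not automatic, and a priori the Leray spectral sequence only gives a filtration on $H^{p,q}(\tilde X,\pi^{*}W)$. The filtration splits because one works over $\mathbb{C}$, but the \emph{canonical} splitting asserted in the theorem comes precisely from the explicit retraction $\pi_{*}$ and the projections $\Pi_{i}$, not from the spectral sequence. Moreover, the identification $E^{q,0}_{2}\cong E^{q,0}_{\infty}$ needed to begin your assembly requires the vanishing of the incoming differentials, which the paper deduces from the injectivity of $\pi^{*}$ (Wells' theorem) via the edge morphism; this point should be made explicit in your argument.
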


A subsidiary result of the above theorem is a natural isomorphism
between cohomologies of relative Dolbeault sheaves given in Lemma \ref{kersheaf-iso}: for $0\leq p,q\leq n$,
$$
H^{q}(X, \K_{X,Z}^{p}(W)) \cong H^{q}(\tilde{X}, \K_{\tilde{X},E}^{p}(\pi^{\ast}W)),
$$
whose notations we refer to Definition \ref{rds}.
This isomorphism is a natural generalization of the key isomorphism
$$
H^{q}(X, \K_{X,Z}^{0}(W)) \cong H^{q}(\tilde{X}, \K_{\tilde{X},E}^{0}(\pi^{\ast}W))
$$
with $W$ being a holomorphic line bundle over $X$
in the proof of Bertram-Ein-Lazarsfeld's vanishing theorem \cite[Theorem 1.2]{BEL91} (see also
\cite[Lemma 4.3.16]{Laz04} for the case of $a=1$ therein).

As applications, we uniformly present several positive (or negative) examples associated to the vanishing
theorems of Girbau \cite{Gir76}, Kawamata-Viehweg  \cite{Kaw82, Vie82} and Green-Lazarsfeld \cite{GL87} and study the blow-up invariance of some classical
holomorphic invariants. In particular, one obtains a new positive example to affirm that Girbau's result is possibly true for $p\neq q$ when the Chern curvature form of the associated line bundle is semi-positive and positive only on a dense open set: Let $\pi: X\rightarrow \mathbb{C}\mathrm{P}^n$ $(n\geq 3)$ be the blowing up at a point in $\mathbb{C}\mathrm{P}^n$. Then for $p\neq q$,
$H^q(X,\Omega_X^p\otimes \pi^*\mathcal{O}_{\mathbb{C}\mathrm{P}^n}(1))=0.$

More generally,
let $f: X\rightarrow Y$ be a projective and birational morphism between $n$-dimensional smooth varieties over a field of characteristic zero.
It is known that $f_{\ast} \mathcal{O}_{X}=\mathcal{O}_{Y}$.
Moreover, as a corollary of local Chow's lemma and Hironaka's resolution of singularities \cite[Main Theorem II']{Hir64}, the higher direct images $R^{i}f_{\ast}\mathcal{O}_{X}$, $i\geq 1$, of the structure sheaf $\mathcal{O}_{X}$ is zero.
Hence,
for a locally free sheaf $\mathcal{W}$ of constant rank on $Y$,
by the projection formula, we have
$$
R^if_{\ast}f^{\ast}\mathcal{W}=
\begin{cases}
 \mathcal{W},\ &\text{$i=0$,} \\
 0,\ &\text{$i\geq 1$}.
\end{cases}
$$
As a result, an application of the Leray spectral sequence implies that for all $l\geq 0$,
\begin{equation}\label{p0ql}
H^{l}(X, f^{\ast}\mathcal W)\cong H^{l}(Y, \mathcal W),
\end{equation}
which yields the case $p=0,q=l$ of Theorem \ref{thm1} over a field of characteristic zero. The case $p=n,q=l$ of Theorem  \ref{thm1}  over a field of characteristic zero follows from $f_*\Omega^n_{X}=\Omega^n_{Y}$ and $R^if_*\Omega^n_{X}=0$, $i\geq 1$ by the Grothendieck-Serre duality (cf. \cite[\S 10]{Pop15} for example).
But the case $p\neq0, n$ for Theorem \ref{thm1} can't be obtained directly by Leray spectral sequences since the relative vanishing doesn't hold necessarily as shown in \cite[Remark 2 $(\beta)$]{Tak85} then.

By Serre's GAGA principle \cite{Ser56} and Theorem \ref{thm1},
the isomorphism \eqref{dol-bl-for} holds for smooth proper varieties over $\mathbb{C}$.
Hence, abstractly by Lefschetz's principle,
the isomorphism \eqref{dol-bl-for} should hold for smooth proper varieties over a field of characteristic zero.
By a recent result of Chatzistamatiou-R\"{u}lling \cite{CR15},
$R^{i}f_{\ast}\mathcal{O}_{X}=0$, $i\geq 1$
and $f_{\ast}\mathcal{O}_{X}=\mathcal{O}_{Y}$,
hence the  isomorphism \eqref{p0ql} holds for smooth varieties over a field of positive characteristic.
This also partially verifies that the isomorphism \eqref{dol-bl-1} holds for smooth varieties over arbitrary field.

It is also important to note that the topological approach in the proof of de Rham blow-up formula can not directly apply to the Dolbeault blow-up formula.
However, the sheaf-theoretic approach still works.
Comparing with the de Rham blow-up formula \cite[Theorem 7.31]{Voi02}, one finds that a crucial step in the proof is the Thom isomorphism.
Because each closed complex submanifold $Z$ of $X$
(as a smooth submanifold) always admits a \emph{smooth} tubular neighborhood in $X$,
the Thom isomorphism holds necessarily.
However, the $\bar{\partial}$-Thom isomorphism does not hold in general, because of the absence of the \emph{holomorphic} tubular neighborhood of $Z$ in $X$.
Especially, even for compact K\"{a}hler manifolds, there exist some obstructions for the existence of the holomorphic tubular neighborhoods (cf. \cite{RN17} and references therein).

In particular, if $W$ is a trivial holomorphic line bundle over $X$, then the formula \eqref{dol-bl-for} descends to a \emph{canonical} isomorphism
\begin{equation}\label{dol-bl-1}
\phi:H^{p,q}_{\bar{\partial}}(\tilde{X})
\rightarrow
H^{p,q}_{\bar{\partial}}(X)\oplus
\biggl(\bigoplus^{r-1}_{i=1}
H^{p-i,q-i}_{\bar{\partial}}(Z)\biggr).
\end{equation}
This implies that the isomorphism of Dolbeault blow-up formula in \cite[Theorem 1.1]{RYY} is actually \emph{canonical}. Recently, Meng \cite{Men18}
extended \eqref{dol-bl-1} to non-compact complex manifolds and Stelzig \cite{Ste18} gave a different proof of this formula.
It is noteworthy that the directions of the morphisms in \eqref{dol-bl-1} are opposite to those in \cite[Theorem 7.31]{Voi02}, \cite{Men18}.
In some sense, the morphism from the sheaf-theoretic approach is the inverse of the morphism from the topological approach.

After our preprint had been distributed, Meng \cite{Men18b} informed us that he has also considered the Dolbeault blow-up formula for the bundle-valued case on non-compact manifolds by Mayer-Vietoris sequence.
\subsection{Outline of the paper}
We devote Section \ref{sec-2} to the preliminaries on the notations throughout this paper and basic sheaf theory of Dolbeault cohomology with values in a holomorphic vector bundle.
In Section \ref{sec-3}, following the steps in the proof of the Hirsch Lemma for Dolbeault cohomology by Cordero-Fernandez-Gray-Ugarte \cite{CFGU00},
we establish that with values in a holomorphic vector bundle.
In Section \ref{sec-4}, we introduce the notation of relative Dolbeault sheaves associated to the blow-up morphism.
In Section \ref{proof}, we use the preparations of Sections \ref{sec-2}-\ref{sec-4} to give the first proof of the main result (Theorem \ref{thm1}), following the full approach of \cite{RYY}$_{v3}$.
Finally, Section \ref{sec-6} contains some applications of Theorem \ref{thm1}.
Appendix \ref{app-1} is a rapid review of Borel spectral sequence of complex analytic bundles
and Appendix \ref{app-2} presents a second proof of Theorem \ref{thm1} using the same philosophy as in \cite{Ste18}.
\subsection*{Acknowledgements}
S. Rao is partially supported by the NSFC (Grant No. 11671305, 11771339).
S. Yang and X. Yang are supported by the NSFC (Grant No. 11571242, 11701414, 11701051) and the China Scholarship Council.
The authors would like to express their gratitude to the following institutes for providing them with excellent working environment and the hospitalities during their respective visits: Institute of Mathematics, Academia Sinica; Department of Mathematics at Universit\`{a} degli Studi di Milano and Department of Mathematics at Cornell University.
Last but not least, we sincerely thank  Professor V. Navarro Aznar for answering our question on Lemma \ref{key-lem-1} $(iii)$, and Dr. L. Meng, J. Stelzig for sending us their preprints \cite{Men18,Men18b} and \cite{Ste18}, respectively.

\section{Notations and sheaf theory}\label{sec-2}
In this section we introduce notation and review some basic definitions to be referred to later.

\subsection{Notations and conventions}\label{notation-con}
Throughout this paper we assume that $X$ is an $n$-dimensional compact complex manifold and $Z\subset X$ is a closed complex submanifold such that $r=\mathrm{codim}_{\mathbb{C}}Z\geq2$.
Let $\pi:\tilde{X}\rightarrow X$ be the blow-up of $X$ with the center $Z$ and $E:=\pi^{-1}(Z)$ the exceptional divisor.
We assume the following notations:
\begin{itemize}
  \item [a)]$\imath:Z\hookrightarrow X$ is the holomorphic inclusion of $Z$ in $X$;
  \item [b)] $\jmath:E\hookrightarrow \tilde{X}$ is the holomorphic inclusion of $E$ in $\tilde{X}$;
  \item [c)]$W$ is a holomorphic vector bundle over $X$ and
        $\tilde{W}=\pi^{\ast}W$ is the pullback of $W$ by $\pi$;
  \item [d)]$\imath^{\ast}W$ and
        $\jmath^{\ast}\tilde{W}$ are the restrictions of $W$ and $\tilde{W}$ on $Z$ and $E$, respectively.
\end{itemize}
In summary, we have the following commutative cube
\begin{equation*}\label{blow-up-cube}
\xymatrix@R=0.25cm@C=0.5cm{
  & \tilde{W} \ar[rr]^{} \ar'[d][dd]
      &  & W \ar[dd]        \\
  \jmath^{\ast}\tilde{W} \ar[ur]\ar[rr]\ar[dd]
      &  & \imath^{\ast}W \ar[ur]\ar[dd] \\
  & \tilde{X} \ar'[r][rr]^{\pi}
      &  & X                \\
  E \ar[rr]^{\varpi}\ar[ur]^{\jmath}
      &  & Z \ar[ur]^{\imath}        }
\end{equation*}
The bottom of the above cube is the blow-up diagram.

\subsection{Bundle-valued Dolbeault cohomology}
Let $A^{p,q}(X,W)$ be the space of smooth sections of the bundle $\wedge^{p,q}T^{*}X\otimes_{\mathbb{C}}W$.
Assume that $k=\mathrm{rank}\,(W)$ and $W|_{U_{i}}\cong U_{i}\times\mathbb{C}^{k}$ is a holomorphic local trivialization with respect to an open covering $\{U_{i}\}_{i\in I}$ of $X$.
Under the trivialization $W|_{U}\cong U\times\mathbb{C}^{k}$ an element $\alpha\in A^{p,q}(X,W)$ can be locally written as
$$
\alpha=(\alpha_{1},\cdots,\alpha_{k}),
$$
where $\alpha_{1},\cdots,\alpha_{k}$ are smooth forms of type $(p,q)$ on $U$.
We then set
$$
\bar{\partial}_{U}(\alpha)=(\bar{\partial}\alpha_{1},\cdots,\bar{\partial}\alpha_{k});
$$
it is a section of $\Omega^{p,q+1}(U)\otimes_{\mathbb{C}}W$.
Assume that $V$ is another open subset of $X$ with the holomorphic trivialization $W|_{V}\cong V\times\mathbb{C}^{k}$ such that $U\cap V\neq\emptyset$.
The holomorphic property of the trivialization means for any $\alpha\in A^{p,q}(X,W)$ the following equality holds
\begin{equation}\label{operator-E}
\bar{\partial}_{U}(\alpha)\big|_{U\cap V}=\bar{\partial}_{V}(\alpha)\big|_{U\cap V}.
\end{equation}
The equality (\ref{operator-E}) enables us to define an operator
$$
\bar{\partial}:A^{p,q}(X,W)\rightarrow A^{p,q+1}(X,W)
$$
by the condition
$
\bar{\partial}(\alpha|_{U})=\bar{\partial}_{U}(\alpha|_{U}).
$
We call $\bar{\partial}$ the {\it canonical $(0,1)$-connection} of the holomorphic bundle $W$.
It is clear that $\bar{\partial}^{2}=0$.
Therefore, for any integer $p=0,\cdots,n$, we get the so-called {\it Dolbeault complex of $(p,\bullet)$-forms with values in $W$}:
\begin{equation}\label{complex-E}
\xymatrix@C=0.5cm{
  0 \ar[r] & A^{p,0}(X,W) \ar[r]^{\bar{\partial}} & A^{p,1}(X,W)
  \ar[r]^{\quad\,\,\,\bar{\partial}} & \cdots \ar[r]^{\bar{\partial}\quad\quad\,\,}
  & A^{p,n-1}(X,W) \ar[r]^{\bar{\partial}} & A^{p,n}(X,W) \ar[r] & 0. }
\end{equation}
The $q$-th cohomology of the complex (\ref{complex-E}), denoted by $H^{p,q}(X,W)$, is called the $(p,q)$-{\it Dolbeault cohomology group with values in $W$.}

For the given complex manifold $X$ let $\mathcal{O}_{X}$ be the sheaf of holomorphic functions on $X$ and $\Omega^{p}_{X}$ the sheaf of holomorphic differentiable forms of type $(p,0)$ on $X$.
Then there exists a canonical resolution of $\mathcal{O}_{X}$-modules for the sheaf $\Omega^{p}_{X}$, namely, the \emph{Dolbeault resolution}:
\begin{equation}\label{dolbeault-reso}
\xymatrix@C=0.5cm{
  0 \ar[r] &\Omega^{p}_{X} \ar[r]^{}& \mathscr{A}^{p,0}_{X} \ar[r]^{\bar{\partial}} & \mathscr{A}^{p,1}_{X}
  \ar[r]^{\bar{\partial}} & \cdots \ar[r]^{\bar{\partial}\quad}
  & \mathscr{A}^{p,n-1}_{X} \ar[r]^{\bar{\partial}} & \mathscr{A}^{p,n}_{X} \ar[r] & 0, }
\end{equation}
where $\mathscr{A}^{p,q}_{X}$ is the sheaf of differentiable $(p,q)$-forms on $X$.
Since $W\rightarrow X$ is a holomorphic vector bundle, the sheaf of holomorphic sections of $W$, denoted by $\mathcal{O}(W)$, is a locally free sheaf of $\mathcal{O}_{X}$-modules on $X$.
By tensoring the Dolbeault resolution (\ref{dolbeault-reso}) with $\mathcal{O}(W)$, we obtain a resolution of the sheaf $\Omega^{p}_{X}\otimes_{\mathcal{O}_{X}}\mathcal{O}(W)$ (cf. \cite[Lemma 3.19 of Chapter II]{Wel08})
\begin{eqnarray}\label{E-resolution}
&& 0 \to  \Omega^{p}_{X}\otimes_{\mathcal{O}_{X}}\mathcal{O}(W) \to
  \mathscr{A}^{p,0}_{X}\otimes_{\mathcal{O}_{X}}\mathcal{O}(W)
   \xrightarrow{\bar{\partial}\otimes1}
 \mathscr{A}^{p,1}_{X}\otimes_{\mathcal{O}_{X}}\mathcal{O}(W)
\xrightarrow{\bar{\partial}\otimes1}  \nonumber\\
&& \;\;\;\;\;\;\;\;\;\;\;\;\;  \cdots \xrightarrow{\bar{\partial}\otimes1}
\mathscr{A}^{p,n-1}_{X}\otimes_{\mathcal{O}_{X}}\mathcal{O}(W)
 \xrightarrow{\bar{\partial}\otimes1}
 \mathscr{A}^{p,n}_{X}\otimes_{\mathcal{O}_{X}}\mathcal{O}(W) \to 0.
\end{eqnarray}

Note that $\Omega^{p}_{X}\otimes_{\mathcal{O}_{X}}\mathcal{O}(W)$ is isomorphic to the sheaf of the {\it holomorphic sections} of the holomorphic bundle $\wedge^{p}T^{\prime*}X\otimes_{\mathbb{C}}W$ and $\mathscr{A}^{p,q}_{X}\otimes_{\mathcal{O}_{X}}\mathcal{O}(W)$ is isomorphic to the sheaf of
differentiable sections of the differentiable bundle $\wedge^{p,q}T^{*}X\otimes_{\mathbb{C}}W$.
For the simplicity, we write $\bar{\partial}\otimes1$ as $\bar{\partial}$,
$$
\Omega^{p}_{X}(W):=\Omega^{p}_{X}\otimes_{\mathcal{O}_{X}}\mathcal{O}(W)\cong\mathcal{O}(\wedge^{p}T^{\prime*}X\otimes_{\mathbb{C}}W)
$$
and
$$
\mathscr{A}^{p,q}_{X}(W):
=
\mathscr{A}^{p,q}_{X}\otimes_{\mathcal{O}_{X}}\mathcal{O}(W)
\cong C^{\infty}(X,\wedge^{p,q}T^{*}X\otimes_{\mathbb{C}}W).
$$
Then the sequence (\ref{E-resolution}) equals to
\begin{equation*}
\xymatrix@C=0.5cm{
  0 \ar[r] &\Omega^{p}_{X}(W) \ar[r]^{j}& \mathscr{A}^{p,0}_{X}(W) \ar[r]^{\bar{\partial}} & \mathscr{A}^{p,1}_{X}(W)
  \ar[r]^{\bar{\partial}} & \cdots\cdots \ar[r]^{\bar{\partial}\,\,\,\quad}
  & \mathscr{A}^{p,n-1}_{X}(W) \ar[r]^{\bar{\partial}} & \mathscr{A}^{p,n}_{X}(W) \ar[r] & 0, }
\end{equation*}
where
$
j:\Omega^{p}_{X}(W)\hookrightarrow\mathscr{A}^{p,0}_{X}(W)
$
is the inclusion.
From definition, the $q$-th {\it sheaf cohomology} of $\Omega^{p}_{X}(W)$ is defined to be
$$
H^{q}(X,\Omega^{p}_{X}(W)):
=R^{q}\Gamma(X,\Omega^{p}_{X}(W)),
$$
where $R^{q}\Gamma$ is the $q$-th derived functor of the global section functor.
For the sheaf complex $(\mathscr{A}^{p,\bullet}_{X}(W),\bar{\partial})$, let $R^{q}\Gamma$ be the $q$-th derived functor of the global section functor of the sheaf complex and the $q$-th {\it hypercohomology} of $(\mathscr{A}^{p,\bullet}_{X}(W),\bar{\partial})$ is defined by
$$
\mathbb{H}^{q}(X,\mathscr{A}^{p,\bullet}_{X}(W)):
=R^{q}\Gamma(X,\mathscr{A}^{p,\bullet}_{X}(W)).
$$
Due to the Dolbeault-Grothendieck Lemma \cite[Proposition 2.31]{Voi02} we get that the sheaf complex $(\mathscr{A}^{p,\bullet}_{X}(W),\bar{\partial})$ is {\it exact}
and
$$
\ker\,(\bar{\partial}:\mathscr{A}^{p,0}_{X}(W)\rightarrow \mathscr{A}^{p,1}_{X}(W))
=\Omega^{p}_{X}(W).
$$
The sheaf $\Omega^{p}_{X}(W)$ can be thought of as a sheaf complex which is zero in degree different from 0, and equal to $\Omega^{p}_{X}(W)$ in degree 0;
and therefore, the inclusion $j:\Omega^{p}_{X}(W)\hookrightarrow
\mathscr{A}^{p,\bullet}_{X}(W)$ is a quasi-isomorphism of sheaf complexes.
By \cite[Corollary 8.9]{Voi02} the following isomorphism holds:
\begin{equation}\label{derived-iso}
R^{q}\Gamma(X,\Omega^{p}_{X}(W))\cong R^{q}\Gamma(X,\mathscr{A}^{p,\bullet}_{X}(W)).
\end{equation}
Moreover, from the Dolbeault theorem \cite[Theorem 3.20 of Chapter II]{Wel08} and (\ref{derived-iso}) we get
\begin{equation*}
H^{p,q}(X,W)\cong H^{q}(X,\Omega^{p}_{X}(W))\cong \mathbb{H}^{q}(X,\mathscr{A}^{p,\bullet}_{X}(W)).
\end{equation*}

\subsection{Direct and inverse images}
Here we refer to \cite{Har77} as a standard reference of sheaf theory.

Let $f:X\rightarrow Y$ be a continuous map of topological spaces.
For an abelian sheaf $\mathscr{F}$ on $X$ we define the {\it direct image} $f_{\ast}\mathscr{F}$ to be the sheaf on $Y$ defined by setting
$$
U
\mapsto
f_{\ast}\mathscr{F}(U)=\mathscr{F}(f^{-1}(U)),
$$
where $U$ is open in $Y$.
The $i$-th \emph{higher direct image} $R^{i}f_{*}\mathscr{F}$ is defined to be the sheafification of the presheaf
$$
U
\mapsto
H^{i}(f^{-1}(U),\mathscr{F})
$$
for any open $U\subset Y$; equivalently, $R^{i}f_{*}\mathscr{F}=\mathscr{H}^{i}(f_{\ast}I^{\bullet})$ for an injective resolution $I^{\bullet}$ of $\mathscr{F}$ and $\mathscr{H}^{i}(f_{\ast}I^{\bullet})$ is the $i$-th cohomology sheaf of the sheaf complex $f_{\ast}I^{\bullet}$.
In general, since the functor $f_{\ast}$ is left exact, one can define the {\it right derived functor} $Rf_{\ast}$ of $f_{\ast}$ as follows:
$$
Rf_{\ast}(\mathscr{F}^{\bullet}):=f_{\ast}I^{\bullet},
$$
where $\mathscr{F}^{\bullet}$ is a bounded blow sheaf complex and
$\mathscr{F}^{\bullet}\rightarrow I^{\bullet}$ is a quasi-isomorphism with $I^{\bullet}$ a complex of injectives.
Particularly, if $X$ and $Y$ have additional structure (for instance, ringed spaces) we mean $R^{i}f_{\ast}$ and $Rf_{\ast}$ in this sense.

Let $\mathscr{G}$ be an abelian sheaf on $Y$.
The {\it topological inverse image} of $\mathscr{G}$, denoted by $f^{-1}\mathscr{G}$, is defined to be the sheafification
of the presheaf
$$
U\mapsto
\lim_{f(U)\subseteq V}\mathscr{G}(V)
$$
for every open subset $U$ of $X$.

Assume that $f$ is a morphism of ringed spaces
$(X,\mathcal{O}_{X})\rightarrow (Y,\mathcal{O}_{Y})$.
If $\mathscr{F}$ is an $\mathcal{O}_{X}$-module,
then the derived image $R^{i}f_{\ast}\mathscr{F}$ has the structure of $\mathcal{O}_{Y}$-module.
Since $\mathcal{O}_{X}$ is an $f^{-1}\mathcal{O}_{Y}$-module,
the {\it inverse image} of $\mathscr{G}$ in the sense of ringed spaces is defined to be
$$
f^{\ast}\mathscr{G}
:=
f^{-1}\mathscr{G}\otimes_{f^{-1}
\mathcal{O}_{Y}}\mathcal{O}_{X}.
$$

Furthermore, we have the well known {\it projection formula} to be frequently used in this context.

\begin{lem}[Projection formula]
For any $i\in \mathbb{N}$,
there is a natural isomorphism
\begin{equation}\label{proj-formula}
R^{i}f_{\ast}\mathcal{E}\otimes \mathcal{F}
\stackrel{\simeq}\longrightarrow
R^{i}f_{\ast}(\mathcal{E}\otimes f^{\ast}\mathcal{F})
\end{equation}
for a sheaf of $\mathcal{O}_{X}$-module $\mathcal{E}$ and a locally free sheaf of constant rank $\mathcal{F}$ on $Y$.
\end{lem}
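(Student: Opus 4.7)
The plan is a standard three-stage argument exploiting the local freeness of $\mathcal{F}$, along the lines of the treatment in \cite{Har77}.

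First, I would construct the natural morphism. For $i = 0$, the adjunction unit $\mathcal{F} \to f_{\ast} f^{\ast}\mathcal{F}$ combined with the canonical pairing $f_{\ast}(-) \otimes_{\mathcal{O}_Y} f_{\ast}(-) \to f_{\ast}(- \otimes_{\mathcal{O}_X} -)$ produces a morphism $f_{\ast}\mathcal{E} \otimes \mathcal{F} \to f_{\ast}(\mathcal{E} \otimes f^{\ast}\mathcal{F})$. To extend this to all $i \geq 0$, choose an injective resolution $\mathcal{E} \to \mathcal{I}^{\bullet}$ in the category of $\mathcal{O}_X$-modules; since $\mathcal{F}$ (and hence $f^{\ast}\mathcal{F}$) is locally free, the functor $- \otimes f^{\ast}\mathcal{F}$ is exact, so $\mathcal{I}^{\bullet} \otimes f^{\ast}\mathcal{F}$ is still a resolution of $\mathcal{E} \otimes f^{\ast}\mathcal{F}$. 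Applying the degree-zero construction termwise yields a morphism of complexes $f_{\ast}\mathcal{I}^{\bullet} \otimes \mathcal{F} \to f_{\ast}(\mathcal{I}^{\bullet} \otimes f^{\ast}\mathcal{F})$, and passing to cohomology sheaves produces the desired morphism (\ref{proj-formula}).

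Second, since both sides of (\ref{proj-formula}) are $\mathcal{O}_Y$-modules and the morphism is natural in $Y$, the assertion that it is an isomorphism is \emph{local on} $Y$. Cover $Y$ by opens $V_{\alpha}$ trivializing $\mathcal{F}$, so that $\mathcal{F}|_{V_{\alpha}} \cong \mathcal{O}_{V_{\alpha}}^{\oplus n}$ for the constant rank $n$. Restricting, we may assume $\mathcal{F} = \mathcal{O}_Y^{\oplus n}$, hence $f^{\ast}\mathcal{F} = \mathcal{O}_X^{\oplus n}$. Using that $R^i f_{\ast}$ and $\otimes$ both commute with finite direct sums and that tensoring with the structure sheaf is the identity, both sides reduce to $(R^i f_{\ast}\mathcal{E})^{\oplus n}$. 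Unwinding the construction of the first step shows the canonical map becomes the identity under this identification, so it is trivially an isomorphism.

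The main obstacle is the gluing step: the local isomorphisms over distinct trivializing opens $V_{\alpha}$ depend a priori on the chosen trivializations, so one must verify they assemble into a single global isomorphism (\ref{proj-formula}). This is guaranteed by the naturality of all constructions entering step one — the adjunction unit and the canonical pairing — with respect to arbitrary $\mathcal{O}_Y$-linear automorphisms of $\mathcal{F}$, hence in particular with respect to the cocycle of transition isomorphisms on overlaps $V_{\alpha} \cap V_{\beta}$. A subsidiary technical concern, that $\mathcal{I}^{\bullet} \otimes f^{\ast}\mathcal{F}$ need not be $f_{\ast}$-acyclic a priori, is automatically resolved after restricting to a trivializing open, where it becomes a finite direct sum of copies of the original injective resolution.
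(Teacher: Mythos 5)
The paper states this lemma without proof, simply invoking it as well known with \cite{Har77} as the ambient reference, so there is nothing to compare against; your argument is the standard one and is correct. The only remark worth making is that your final ``gluing'' worry is already moot: since the morphism \eqref{proj-formula} is constructed globally in your first step, it suffices to check it is an isomorphism after restriction to each trivializing open $V_{\alpha}$ (isomorphy of a sheaf map being local), and no compatibility on overlaps needs to be verified.
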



\section{Hirsch Lemma: bundle-valued case }
\label{sec-3}

The purpose of this section is to establish the Hirsch Lemma for bundle-valued Dolbeault cohomology
using the same results and intermediate steps as in \cite[Section 4.2]{CFGU00}, which is possibly known to experts.

Suppose that
$\xymatrix@C=0.5cm{
F\hookrightarrow E\ar[r]^{\,\,\,\quad\pi} &B }
$
is a holomorphic fibration such that $E,B,F$ are connected complex manifolds.
Moreover, we assume that the structure group $G$ of $\xi=(E,B,F,\pi)$ is connected and
all forms we consider are smooth and $\mathbb{C}$-valued.
Let $W$ be a holomorphic vector bundle over $B$ and $\tilde{W}$ the pull-back of $W$ by the projection $\pi$
$$
\xymatrix{
& \tilde{W} \ar[d]_{} \ar[r]^{} & W \ar[d]^{} \\
& E \ar[r]^{\pi} & B.  }
$$
From definition, the projection $\pi$ induces a morphism
$
\pi^{\ast}:A^{\bullet,\bullet}(X,W)\rightarrow A^{\bullet,\bullet}(\tilde{X},\pi^{*}W).
$

\begin{defn}\label{extending}
Let $j:F\hookrightarrow E$ be the inclusion mapping.
We say that $H^{\bullet,\bullet}_{\bar{\partial}}(F)$ satisfies the {\it extending condition},
if the morphism of bigraded algebras
$j^{*}:H^{\bullet,\bullet}_{\bar{\partial}}(E)
\rightarrow
H^{\bullet,\bullet}_{\bar{\partial}}(F)$
is surjective.
\end{defn}

It is noteworthy that the extending condition here is stronger than the condition that each generator of $H^{\bullet,\bullet}_{\bar{\partial}}(F)$ is \emph{transgressive} in the sense of \cite[Section 4.2]{CFGU00}.
However, the extending condition is always satisfied for the projectivization of a holomorphic vector bundle.

Assume that $H^{\bullet,\bullet}_{\bar{\partial}}(F)$ is a \emph{free} bigraded algebra satisfying the extending condition.
Pick up a basis $\{\textbf{x}_{1},\cdots,\textbf{x}_{m}\}$ of $H^{\bullet,\bullet}_{\bar{\partial}}(F)$.
From Definition \ref{extending}, there exists an element $\tilde{\textbf{x}}_{i}\in H^{\bullet,\bullet}_{\bar{\partial}}(E)$ such that $j^{*}(\tilde{\textbf{x}}_{i})=\textbf{x}_{i}$.
Let
$T=A^{\bullet,\bullet}(B,W)\otimes_{\mathbb{C}} H^{\bullet,\bullet}_{\bar{\partial}}(F)$
and define the differential $\bar{\partial}$ on $T$ by setting
\begin{equation}\label{barpartial}
\bar{\partial}(\omega\otimes \textbf{x}_{i})
=(\bar{\partial}\omega)\otimes \textbf{x}_{i},
\,\,\,\mathrm{for\,\,\,any}\,\,\,1\leq i\leq m.
\end{equation}
Thus, $(T,\bar{\partial})$ is a differential graded complex over $\mathbb{C}$.
Moreover, we can define a morphism of complexes
\begin{equation}\label{t-E-mor}
\psi:T\rightarrow A^{\bullet,\bullet}(E,\pi^{*}W),\,\,\,
\omega\otimes \textbf{x}_{i}\mapsto\pi^{*}(\omega)
\wedge\tilde{\textbf{x}}_{i},
\end{equation}
where $\tilde{\textbf{x}}_{i}$ can be viewed as a form on $E$ with values in the trivial holomorphic line bundle.

Set
$$
^{p,q}T=\sum_{a+c=p\atop{b+d=q}}
A^{a,b}(B,W)\otimes H^{c,d}_{\bar{\partial}}(F)
$$
and define
$$
L^{k}T=\sum_{a+b\geq k}A^{a,b}(B,W)\otimes H^{c,d}_{\bar{\partial}}(F),
$$
where the degree $0\leq k\leq n:=\textmd{dim}_{\mathbb{R}}B$.
Then we get
$$
T={\bigoplus_{p,q}}^{p,q}T,\,\,\,\,\,\,
\bar{\partial}(L^{k}T)\subset L^{k}T.
$$
The above construction actually yields a {\it filtration} on $T$:
\begin{equation}\label{filtration}
T=L^{0}T\supset L^{1}T\supset\cdots\supset L^{n-1}T\supset L^{n}T\supset0.
\end{equation}

We introduce the following subspaces of $T$:
\begin{itemize}
  \item [(i)] $T^{s+t}=\bigoplus_{p+q=s+t}({^{p,q}T}),$
  \item [(ii)] $^{p,q}L^{k}T=(L^{k}T)\cap(^{p,q}T),$
  \item [(iii)] $^{p,q}Z^{s,t}_{l}=(^{p,q}L^{s}T^{s+t})\cap(\bar{\partial}^{-1}\bigl(\,^{p,q+1}L^{s+l}T^{s+t+1}\bigr)),$
  \item [(iv)] $^{p,q}B^{s,t}_{l}=(^{p,q}L^{s}T^{s+t})\cap\bar{\partial}\bigl(^{p,q-1}L^{s-l}T^{s+t-1}\bigr).$
\end{itemize}
The differential $\bar{\partial}$ acts on $^{p,q}L^{k}T$ with the degree 1 in $q$ and the degree 0 in $p$, i.e.,
$$
\bar{\partial}:\,^{p,q}L^{k}T\rightarrow\,^{p,q+1}L^{k}T.
$$

According to the standard spectral sequence theory, there exists a spectral sequence $\{\hat{E}_{l},\hat{d}_{l}\}$ with respect to the filtration \eqref{filtration} such that
\begin{itemize}
  \item [(i)]the term of $\hat{E}_{l}$ of type $(p,q)$ with the filtration degree $s$ and the total degree $s+t$ is
             $$
             ^{p,q}\hat{E}^{s,t}_{l}=
             \frac{^{p,q}Z^{s,t}_{l}}{^{p,q}Z^{s+1,t-1}_{l-1}
             +{^{p,q}B^{s,t}_{l-1}}};
             $$
  \item [(ii)] the differential is
             $$
             \hat{d}_{l}:\,^{p,q}\hat{E}^{s,t}_{l}
             \rightarrow\,^{p,q+1}\hat{E}^{s+l, t-l+1}_{l};
             $$
  \item [(iii)] $\{\hat{E}_{l}\}$ converges to $H_{\bar{\partial}}(T)$, i.e.,
             $$
             \sum_{p+q=s+t}{^{p,q}\hat{E}^{s,t}_{\infty}}\cong
             \textmd{Gr}\,H^{p,q}_{\bar{\partial}}(T)\cong
             H^{p,q}_{\bar{\partial}}(T).
             $$
\end{itemize}
We claim that the second term $\hat{E}_{2}$ is
$$
^{p,q}\hat{E}^{s,t}_{2}\cong
\sum_{i\geq0}H^{i,s-i}(B,W)\otimes H^{p-i,q-s+1}_{\bar{\partial}}(F).
$$

For the $(p,q)$-type subspace of $T$ with the filtration degree $s$ and total degree $s+t$, we have
\begin{equation}\label{equ4.2}
^{p,q}L^{s}T^{s+t}=\sum_{a+b\geq s\atop{p+q=s+t}}\sum_{a+c=p\atop{b+d=q}}
A^{a,b}(B,W)\otimes H^{c,d}_{\bar{\partial}}(F)
\end{equation}
and
\begin{equation}\label{equ4.3}
^{p,q}L^{s+1}T^{s+t}=\sum_{a+b\geq s+1\atop{p+q=s+t}}\sum_{a+c=p\atop{b+d=q}}
A^{a,b}(B,W)\otimes H^{c,d}_{\bar{\partial}}(F).
\end{equation}
Set
\begin{equation}\label{equ4.4}
^{p,q}Q^{s,t}=\sum_{a+b=s\atop{p+q=s+t}}
\sum_{a+c=p\atop{b+d=q}}A^{a,b}(B,W)\otimes H^{c,d}_{\bar{\partial}}(F)
=\sum_{j\geq0}A^{j,s-j}(B,W)\otimes H^{p-j,q-s+j}_{\bar{\partial}}(F).
\end{equation}
From (\ref{equ4.2})-(\ref{equ4.4}) we obtain the following equality
\begin{equation}\label{equ4.5}
^{p,q}L^{s}T^{s+t}=\,^{p,q}Q^{s,t}\oplus\,^{p,q}L^{s+1}T^{s+t}.
\end{equation}

We are ready to compute $^{p,q}\hat{E}^{s,t}_{2}$.
From definition, we have
\begin{equation}\label{z-1}
{^{p,q}Z^{s+1,t-1}_{1}}=
(^{p,q}L^{s+1}T^{s+t})\cap\bar{\partial}^{-1}
       \bigl(^{p,q+1}L^{s+2}T^{s+t+1}\bigr).
\end{equation}
Furthermore, it is straightforward to get
\begin{eqnarray}\label{z-2}
  ^{p,q}Z^{s,t}_{2}
   &=& \,^{p,q}L^{s}T^{s+t}\cap
   \bar{\partial}^{-1}\bigl(^{p,q+1}L^{s+2}T^{s+t+1}\bigr) \nonumber\\
   &=&\bigl(^{p,q}Q^{s,t}\oplus
   \,^{p,q}L^{s+1}T^{s+t}\bigr)
   \cap\bar{\partial}^{-1}\bigl(^{p,q+1}L^{s+2}T^{s+t+1}\bigr) \quad\quad\quad(\textmd{By}
   \,\,\,\eqref{equ4.5}-\eqref{z-1})\nonumber\\
   &=& \bigl(^{p,q}Q^{s,t}\cap\bar{\partial}^{-1}
   \bigl(^{p,q+1}L^{s+2}T^{s+t+1}\bigr)\bigr)
   \oplus{^{p,q}Z^{s+1,t-1}_{1}}.
\end{eqnarray}
If
$
\omega_{s}=\sum_{j\geq0}(\omega^{j,s-j}\otimes \textbf{x}^{p-j,q-s+j})\in\,^{p,q}Q^{s,t},
$
then we have
\begin{equation}\label{equ4.6}
\bar{\partial}(\omega_{s})=
\sum_{j\geq0}\bar{\partial}(\omega^{j,s-j})
\otimes \textbf{x}^{p-j,q-s+j}.
\end{equation}
Consider the filtration degrees of the terms in the right side of (\ref{equ4.6}).
Observe that
$$
\bar{\partial}(\omega^{j,s-j})\otimes \textbf{x}^{p-j,q-s+j}\in L^{s+1}T
$$
has the filtration degree $s+1$.
This means that $\bar{\partial}(\omega_{s})\in\,^{p,q+1}L^{s+2}T^{s+t+1}$, namely,
$\bar{\partial}(\omega_{s})$ has the filtration degree at least $s+2$, if and only if $\bar{\partial}(\omega^{j,s-j})
\otimes \textbf{x}^{p-j,q-s+j}=0$ for any $j\geq0$
and hence
$\bar{\partial}(\omega^{j,s-j})=0$.
It follows that the subspace
$$^{p,q}Q^{s,t}\cap\bar{\partial}^{-1}
   \bigl(^{p,q+1}L^{s+2}T^{s+t+1}\bigr)$$
is equivalent to
$$
Y:=\biggl\{\,\sum_{j\geq0}\omega^{j,s-j}
\otimes \textbf{x}^{p-j,q-s+j}\in\,^{p,q}Q^{s,t}
\,\bigg|\,\bar{\partial}(\omega^{j,s-j})=0,
\,\,\mathrm{for\,\,\,any}\,\,\,j\geq0\biggr\}
$$
and thus, by \eqref{z-2}, we get
\begin{equation}\label{z-2-Y}
^{p,q}Z^{s,t}_{2}=Y\oplus{^{p,q}Z^{s+1,t-1}_{1}}.
\end{equation}

For the term $^{p,q}B^{s,t}_{1}$, the definition shows
$$
^{p,q}B^{s,t}_{1}= \,(^{p,q}L^{s}T^{s+t})\cap
\bar{\partial}\bigl(^{p,q-1}L^{s-1}T^{s+t-1}\bigr).
$$
An element $\omega_{s-1}\in\,^{p,q-1}L^{s-1}T^{s+t-1}$ has a unique expression
$$
\omega_{s-1}=
\sum_{i\geq0}\omega^{i,s-i-1}\otimes \textbf{x}^{p-i,q-s+i}
+
\sum_{i\geq0\atop{k\geq s}}\omega^{i,k-i}
\otimes \textbf{x}^{p-i,q-k-1+i}.
$$
Due to \eqref{barpartial}, we get
\begin{equation*}
\bar{\partial}(\omega_{s-1})=
\sum_{i\geq0}
\bar{\partial}\bigl(\omega^{i,s-i-1}\bigr)
\otimes \textbf{x}^{p-i,q-s+i}
+\sum_{i\geq0\atop{k\geq s}}\bar{\partial}(\omega^{i,k-i})
\otimes \textbf{x}^{p-i,q-k-1+i}.
\end{equation*}
On one hand, a careful degree checking shows
\begin{equation*}
\omega^{\prime}:=
\sum_{i\geq0}\bar{\partial}\bigl(\omega^{i,s-i-1}\bigr)
\otimes \textbf{x}^{p-i,q-s+i}\in\,^{p,q}L^{s}T^{s+t}
\end{equation*}
and
\begin{equation*}
\omega^{\prime\prime}:=
\sum_{i\geq0\atop{k\geq s}}\bar{\partial}\bigl(\omega^{i,k-i}\bigr)
\otimes \textbf{x}^{p-i,q-k-1+i}\in\,^{p,q}L^{s+1}T^{s+t}.
\end{equation*}
On the other hand, since
$\bar{\partial}(\omega^{\prime\prime})=0$ it means
$$
\omega^{\prime\prime}\in
(^{p,q}L^{s+1}T^{s+t})\cap
\bar{\partial}^{-1}\bigl(^{p,q+1}L^{s+2}T^{s+t+1}\bigr)
=
{^{p,q}Z^{s+1,t-1}_{1}}.
$$
As a result, we obtain
\begin{equation}\label{b-1}
^{p,q}B^{s,t}_{1}=
\bar{\partial}\,\bigl(^{p,q-1}L^{s-1}T^{s+t-1}\bigr)+
\mathrm{terms\,\,\,in}\,\,{^{p,q}Z^{s+1,t-1}_{1}}.
\end{equation}
From \eqref{z-2-Y} and \eqref{b-1}, it concludes the isomorphisms:
$$
^{p,q}\hat{E}^{s,t}_{2}
=\frac{^{p,q}Z^{s,t}_{2}}{^{p,q}Z^{s+1,t-1}_{1}
+{^{p,q}B^{s,t}_{1}}}
\cong\frac{Y}
{\bar{\partial}\,\bigl(^{p,q-1}L^{s-1}T^{s+t-1}\bigr)},
$$
and thus
\begin{equation}\label{hat-e2}
^{p,q}\hat{E}^{s,t}_{2}\cong
\sum_{i\geq0}H^{i,s-i}_{\bar{\partial}}(B,W)
\otimes
H^{p-i,q-s+i}_{\bar{\partial}}(F).
\end{equation}

\begin{lem}[Hirsch Lemma]\label{hirsch-lem}
Suppose that $H^{\bullet,\bullet}_{\bar{\partial}}(F)$
satisfies the extending condition (see Definition \ref{extending}).
Then the morphism $\psi$ in \eqref{t-E-mor} induces an isomorphism
$$
H^{\bullet,\bullet}_{\bar{\partial}}(T)\cong H^{\bullet,\bullet}(E,\pi^{*}W).
$$
\end{lem}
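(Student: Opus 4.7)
The plan is to prove Lemma \ref{hirsch-lem} by a spectral sequence comparison, running the argument in parallel on $T$ and on $A^{\bullet,\bullet}(E,\pi^{\ast}W)$. On $T$ we already have the spectral sequence $\{\hat{E}_{l}\}$ with second page computed in \eqref{hat-e2}; what remains is to build a matching spectral sequence on the target, and to verify that $\psi$ is filtration-preserving and induces an isomorphism at $E_{2}$.

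First, I would equip $A^{\bullet,\bullet}(E,\pi^{\ast}W)$ with the \emph{horizontal filtration}: using local holomorphic trivializations $\pi^{-1}(U)\cong U\times F$, any $\pi^{\ast}W$-valued $(p,q)$-form on $E$ decomposes into components with a prescribed number of horizontal legs from $B$, and $L^{k}A^{\bullet,\bullet}(E,\pi^{\ast}W)$ consists of forms whose horizontal degree is at least $k$. Holomorphy of the transition functions of the fibration $\xi$ guarantees that this filtration is globally well defined and preserved by $\bar{\partial}$. The map $\psi$ is manifestly filtration-preserving, since $\pi^{\ast}(\omega)\wedge\tilde{\textbf{x}}_{i}$ has horizontal degree equal to that of $\omega$. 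Because $W$ is pulled back from $B$, its restriction to each fiber $F_{b}=\pi^{-1}(b)$ is canonically trivial, so on the graded pieces $\bar{\partial}$ acts fiberwise as the ordinary fiberwise $\bar{\partial}$ twisted by a vector in $W_{b}$. This, together with the connectedness of the structure group $G$ which kills the fiberwise monodromy, lets me identify the $E_{1}$-page with the sheaf of fiberwise Dolbeault cohomologies, which is the constant sheaf $W\otimes H^{\bullet,\bullet}_{\bar{\partial}}(F)$ on $B$. Taking Dolbeault cohomology of $B$ with values in this bundle then yields
\begin{equation*}
{}^{p,q}E^{s,t}_{2}(E,\pi^{\ast}W)\;\cong\;\bigoplus_{i\geq 0}H^{i,s-i}_{\bar{\partial}}(B,W)\otimes H^{p-i,q-s+i}_{\bar{\partial}}(F),
\end{equation*}
in agreement with \eqref{hat-e2}; this is essentially the Borel spectral sequence of Appendix \ref{app-1} applied to $\pi^{\ast}W$.

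Next I would check that $\psi$ induces precisely the identity under these two compatible descriptions of $E_{2}$. A representative of an $E_{2}$-class on the source is a sum of $\bar{\partial}$-closed pieces $\omega^{i,s-i}\otimes \textbf{x}^{p-i,q-s+i}$; it maps under $\psi$ to $\sum_{i}\pi^{\ast}\omega^{i,s-i}\wedge\tilde{\textbf{x}}^{p-i,q-s+i}$, and the extending condition $j^{\ast}\tilde{\textbf{x}}_{k}=\textbf{x}_{k}$ ensures that, on the associated graded, $\tilde{\textbf{x}}_{k}$ represents the correct fiberwise cohomology class. Hence the image of an $E_{2}$-class is $[\omega]\otimes \textbf{x}_{k}$ under the identification above, so $\psi$ induces an isomorphism on $E_{2}$. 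Both filtrations are bounded because $\dim B$ is finite, so both spectral sequences converge, and the classical comparison theorem for spectral sequences propagates the $E_{2}$-isomorphism to $E_{\infty}$ and to the abutments, giving $H^{\bullet,\bullet}_{\bar{\partial}}(T)\cong H^{\bullet,\bullet}(E,\pi^{\ast}W)$.

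The main obstacle I anticipate is the rigorous identification of the $E_{1}$ and $E_{2}$ pages of the target spectral sequence. One must check that the horizontal filtration is preserved by $\bar{\partial}$ (using holomorphy of the transition functions of $\xi$), that the connectedness of $G$ trivializes the local system of fiberwise cohomologies so that the Borel-style identification holds with $W$-coefficients on $B$, and that the pulled-back bundle $\pi^{\ast}W$ does not contribute extra terms at the fiberwise stage — this last point is precisely where it matters that $W$ is pulled back from $B$ rather than an arbitrary bundle on $E$. Once this local-to-global bookkeeping is in place, the extending condition carries the fiberwise generators to global classes on $E$, and the remainder of the argument runs in parallel with the $\mathbb{C}$-valued case treated in \cite[Section 4.2]{CFGU00}.
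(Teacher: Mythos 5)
Your proposal is correct and follows essentially the same route as the paper: the paper also filters $A^{\bullet,\bullet}(E,\pi^{\ast}W)$ by the Borel filtration, invokes Theorem \ref{thm3.2} and Corollary \ref{cor3.3} to identify the second page with $\sum_{i\geq0}H^{i,s-i}(B,W)\otimes H^{p-i,q-s+i}_{\bar{\partial}}(F)$, observes that $\psi$ is filtration-preserving, and concludes by the spectral sequence comparison theorem. The only difference is that you unpack the Borel computation and explicitly check that $\psi_{2}$ realizes the canonical identification (rather than merely noting the two $E_{2}$-pages are abstractly isomorphic), which is a point the paper leaves implicit.
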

\begin{proof}
Let $L^{\ast}A^{\bullet,\bullet}(E,\pi^{*}W)$ be the Borel filtration of $A^{\bullet,\bullet}(E,\pi^{*}W)$.
On one hand, according to Theorem \ref{thm3.2} and Corollary \ref{cor3.3} we get that the associated Borel spectral sequence $\{E_{l},d_{l}\}$ converges to $H(E,\pi^{*}W)$ with the second terms
\begin{equation}\label{e2}
^{p,q}E^{s,t}_{2}\cong\sum_{i\geq0}H^{i,s-i}(B,W)\otimes H^{p-i,q-s+1}_{\bar{\partial}}(F).
\end{equation}
On the other hand, note that the morphism of complexes $\psi:T\rightarrow A^{\bullet,\bullet}(E,\pi^{*}W)$ is filtration preserving,
and hence it induces a morphism of spectral sequences
$
\psi_{l}:\hat{E}_{l}\rightarrow E_{l}.
$
Combining \eqref{hat-e2} with \eqref{e2} implies that
$$
\psi_{2}:{^{p,q}\hat{E}^{s,t}_{2}}\rightarrow{^{p,q}E^{s,t}_{2}}
$$
is isomorphic.
As a direct consequence of a standard result in spectral sequence theory we get $\hat{E}_{l}\cong E_{l}$ for any $l\geq2$.
This means $\hat{E}_{\infty}\cong E_{\infty}$, and therefore $\psi$ induces an isomorphism of cohomologies
$
H^{\bullet,\bullet}_{\bar{\partial}}(T)\cong H^{\bullet,\bullet}(E,\tilde{W}),
$
which completes the proof.
\end{proof}

Now consider the projectivization
$\pi:\mathbb{P}(V)\rightarrow B$ of a holomorphic vector bundle $V$ over $B$ with rank $r$.
Then the fiber $F$ of $\mathbb{P}(V)$ is $\mathbb{C}\mathrm{P}^{r-1}$.
Assume that $W\rightarrow B$ is a holomorphic vector bundle and let $\tilde{W}=\pi^{\ast}W$ be the pull-back of $W$ on $\mathbb{P}(V)$.
Then we have
\begin{lem}\label{dolb-projective-formula}
The Dolbeault cohomology of $\tilde{W}$ is a free $H^{\bullet,\bullet}(B,W)$-bigraded module with the basis $\{1,\tilde{\bm{t}},\cdots,\tilde{\bm{t}}^{r-1}\}$;
namely, we have the canonical isomorphism
$$
H^{\bullet,\bullet}(\mathbb{P}(V),\tilde{W})
\cong H^{\bullet,\bullet}(B,W)\otimes\{1,\tilde{\bm{t}},\cdots,
\tilde{\bm{t}}^{r-1}\},
$$
where $\tilde{\bm{t}}=c_{1}(\mathcal{O}_{\mathbb{P}(V)}(1))\in H^{1,1}_{\bar{\partial}}(\mathbb{P}(V))$.
\end{lem}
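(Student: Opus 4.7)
The plan is to apply the Hirsch Lemma (Lemma \ref{hirsch-lem}) to the projectivization $\pi:\mathbb{P}(V)\to B$, whose typical fiber is $F\cong\mathbb{C}\mathrm{P}^{r-1}$. The first step is to identify the Dolbeault cohomology of the fiber: since $F$ is a compact K\"ahler manifold with $H^{p,q}_{\bar{\partial}}(\mathbb{C}\mathrm{P}^{r-1})=0$ for $p\neq q$ and $H^{p,p}_{\bar{\partial}}(\mathbb{C}\mathrm{P}^{r-1})=\mathbb{C}$ for $0\leq p\leq r-1$, one has
$$
H^{\bullet,\bullet}_{\bar{\partial}}(F)\cong \mathbb{C}[\bm{t}]/(\bm{t}^{r}),\qquad \bm{t}=c_{1}(\CO_{F}(1))\in H^{1,1}_{\bar{\partial}}(F),
$$
so $\{1,\bm{t},\dots,\bm{t}^{r-1}\}$ furnishes a bigraded $\mathbb{C}$-basis compatible with the hypotheses of the Hirsch Lemma.

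Next, I would verify the extending condition of Definition \ref{extending}. Take $\tilde{\bm{t}}=c_{1}(\CO_{\mathbb{P}(V)}(1))\in H^{1,1}_{\bar{\partial}}(\mathbb{P}(V))$. Since $\CO_{\mathbb{P}(V)}(1)|_{F}\cong \CO_{F}(1)$, the inclusion $j:F\hookrightarrow \mathbb{P}(V)$ pulls $\tilde{\bm{t}}$ back to $\bm{t}$, and hence $j^{*}(\tilde{\bm{t}}^{i})=\bm{t}^{i}$ for $0\leq i\leq r-1$. This shows that $j^{*}:H^{\bullet,\bullet}_{\bar{\partial}}(\mathbb{P}(V))\to H^{\bullet,\bullet}_{\bar{\partial}}(F)$ is surjective, so the extending condition holds with lifts $\tilde{\bm{x}}_{i}=\tilde{\bm{t}}^{i}$.

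With these two inputs in place, I would form the differential bigraded complex
$$
T=A^{\bullet,\bullet}(B,W)\otimes_{\mathbb{C}}H^{\bullet,\bullet}_{\bar{\partial}}(F)
$$
with differential $\bar{\partial}$ acting only on the first factor, as in \eqref{barpartial}. Because the second factor is concentrated in $\bar{\partial}$-cocycles and the tensor product is over $\mathbb{C}$, the K\"unneth-type computation yields
$$
H^{\bullet,\bullet}_{\bar{\partial}}(T)\cong H^{\bullet,\bullet}(B,W)\otimes_{\mathbb{C}}H^{\bullet,\bullet}_{\bar{\partial}}(F)=H^{\bullet,\bullet}(B,W)\otimes\{1,\bm{t},\dots,\bm{t}^{r-1}\}.
$$
The Hirsch Lemma then provides an isomorphism $H^{\bullet,\bullet}_{\bar{\partial}}(T)\cong H^{\bullet,\bullet}(\mathbb{P}(V),\tilde{W})$, induced by the morphism $\psi$ of \eqref{t-E-mor} sending $\omega\otimes\bm{t}^{i}\mapsto \pi^{*}(\omega)\wedge\tilde{\bm{t}}^{i}$. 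Composing these two isomorphisms furnishes the required canonical identification with basis $\{1,\tilde{\bm{t}},\dots,\tilde{\bm{t}}^{r-1}\}$.

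The only delicate point is the verification of the extending condition; once surjectivity of $j^{*}$ on the generator $\bm{t}$ is secured via the tautological identification $\CO_{\mathbb{P}(V)}(1)|_{F}\cong \CO_{F}(1)$, surjectivity on all of $H^{\bullet,\bullet}_{\bar{\partial}}(F)$ is immediate by multiplicativity. Everything else reduces to invoking Lemma \ref{hirsch-lem} and computing cohomology of a tensor complex whose second factor has trivial differential, so no further obstacle is expected.
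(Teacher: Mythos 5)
Your proposal is correct and follows essentially the same route as the paper's proof: identify $H^{\bullet,\bullet}_{\bar{\partial}}(\mathbb{C}\mathrm{P}^{r-1})\cong\mathbb{C}[\bm{t}]/(\bm{t}^{r})$, verify the extending condition via the restriction of $\tilde{\bm{t}}=c_{1}(\mathcal{O}_{\mathbb{P}(V)}(1))$ to the fibers, compute the cohomology of $T=A^{\bullet,\bullet}(B,W)\otimes H^{\bullet,\bullet}_{\bar{\partial}}(F)$ (whose differential acts only on the first factor), and invoke the Hirsch Lemma. Your explicit remark that surjectivity of $j^{*}$ on all of $H^{\bullet,\bullet}_{\bar{\partial}}(F)$ follows by multiplicativity from surjectivity onto the generator is a slightly more careful articulation of the step the paper passes over quickly, but the argument is the same.
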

\begin{proof}
Note that the Dolbeault cohomology of $F$ is
$$
H^{\bullet,\bullet}_{\bar{\partial}}(F)\cong H^{\bullet,\bullet}_{\bar{\partial}}(\mathbb{C}\mathrm{P}^{r-1})
\cong\mathbb{C}[\bm{t}]/(\bm{t}^{r}),
$$
where the generator $\bm{t}\in H^{1,1}_{\bar{\partial}}(\mathbb{C}\mathrm{P}^{r-1})$ is the K\"{a}hler form of the Fubini-Study metric on $\mathbb{C}\mathrm{P}^{r-1}$.
We claim that $H^{\bullet,\bullet}_{\bar{\partial}}(F)$ satisfies the extending condition.
Consider the first Chern class of the tautological line bundle over $\mathbb{P}(V)$:
$$
\tilde{\bm{t}}=c_{1}(\mathcal{O}_{\mathbb{P}(V)}(1))\in H^{1,1}_{\bar{\partial}}(\mathbb{P}(V)).
$$
From definition, the restriction of $\tilde{\bm{t}}$ to each fiber $F$ is the generator of $H^{*,*}_{\bar{\partial}}(F)$, namely, $\tilde{\bm{t}}|_{F}=\bm{t}$.
Consequently, $\bm{t}$ can be extended to be a class on $E$.
Set
$
T=A^{\bullet,\bullet}(B,W)\otimes H^{\bullet,\bullet}_{\bar{\partial}}(\mathbb{C}\mathrm{P}^{r-1}).
$
We can define a natural differential operator of (0,1)-type on $T$ by setting
$$
\bar{\partial}_{T}(a\otimes \bm{t})=(\bar{\partial}a)\otimes \bm{t},
$$
for any $a\otimes \bm{t}\in T$.
By definition, the cohomology of $T$ is
\begin{eqnarray*}
  H^{\bullet,\bullet}(T,\bar{\partial}_{T})
  &=& H^{\bullet,\bullet}(B,W)\otimes H^{\bullet,\bullet}_{\bar{\partial}}(\mathbb{C}\mathrm{P}^{r-1}) \\
  &\cong& H^{\bullet,\bullet}(B,W)\otimes \{1,\bm{t},\cdots,\bm{t}^{r-1}\}\\
  &\cong& H^{\bullet,\bullet}(B,W)\otimes \{1,\tilde{\bm{t}},\cdots,\tilde{\bm{t}}^{r-1}\}.
\end{eqnarray*}
According to Lemma \ref{hirsch-lem} we obtain
$$
H^{\bullet,\bullet}(\mathbb{P}(V),\tilde{W})
\cong H^{\bullet,\bullet}(T,\bar{\partial}_{T})
\cong H^{\bullet,\bullet}(B,W)\otimes\{1,\tilde{\bm{t}},
\cdots,\tilde{\bm{t}}^{r-1}\},
$$
and the proof is complete.
\end{proof}
\section{Relative Dolbeault sheaves associated to the blow-up diagram}\label{sec-4}
\subsection{Higher direct images}
Recall the blow-up diagram:
\begin{equation}\label{blowup-diag3}
\xymatrix{
E \ar[d]_{\varpi} \ar@{^{(}->}[r]^{\jmath} & \tilde{X}\ar[d]^{\pi}\\
 Z \ar@{^{(}->}[r]^{\imath} & X.}
\end{equation}
Then we have the following isomorphisms:
\begin{equation}\label{3-iso}
\pi^{\ast}:
\Omega^{p}_{X}
\stackrel{\simeq}\longrightarrow
\pi_{\ast}\Omega^{p}_{\tilde{X}},\,\,\,
\varpi^{\ast}:
\Omega^{p}_{Z}
\stackrel{\simeq}\longrightarrow
\varpi_{\ast}
\Omega^{p}_{E},\,\,\,
\textmd{and}\,\,\,
\jmath^{\ast}:
R^{s}\pi_{\ast}\Omega^{p}_{\tilde{X}}
\stackrel{\simeq}\longrightarrow
\imath_{\ast}R^{s}\varpi_{\ast}\Omega^{p}_{E} \,\,\,(s\geq1).
\end{equation}
The first isomorphism in \eqref{3-iso} can be proved by using the Hartogs extension theorem
and for the second we refer to \cite[\S 4, Theorem 2]{Ver74}.
For $s>1$, the third isomorphism in \eqref{3-iso} was first proved by Gros \cite{Gro85} for smooth schemes over a field.
Subsequently, in their paper \cite{GNA02}, Gull\'{e}n-Navarro Aznar  improved this result for any $s\geq1$ on smooth schemes over a field.
In his paper \cite{Ste18}, Stelzig indicated those isomorphisms for complex manifolds.
Without claiming any originality,
we extend the isomorphisms in \eqref{3-iso} to the bundle-valued case by following the same steps in the proof of \cite[Proposition 3.3]{GNA02} since there is no available literature.

\begin{lem}\label{key-lem-1} For a holomorphic vector bundle $W$ on $X$ and $\tilde{W}:=\pi^{\ast}W$,
there hold:
\begin{itemize}
  \item [(i)]
          $
          \pi^{\ast}:
          \Omega^{p}_{X}(W)
          \stackrel{\simeq}\longrightarrow
          \pi_{\ast}\Omega^{p}_{\tilde{X}}(\tilde{W})
          $,
  \item [(ii)]
          $
           \varpi^{\ast}:
           \Omega^{p}_{Z}(\imath^{\ast}W)
           \stackrel{\simeq}\longrightarrow
           \varpi_{\ast}\Omega^{p}_{E}(\jmath^{\ast}\tilde{W})
           $,
  \item [(iii)]
      $\jmath^{\ast}:
      R^{s}\pi_{\ast}\Omega^{p}_{\tilde{X}}(\tilde{W})
      \stackrel{\simeq}\longrightarrow
      \imath_{\ast}R^{s}\varpi_{\ast} \Omega^{p}_{E}(\jmath^{\ast}\tilde{W}), \,\,\,    (s\geq1).
      $
\end{itemize}
\end{lem}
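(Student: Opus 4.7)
The plan is to reduce each of the three bundle-valued statements to the corresponding trivial-coefficient version already recorded in \eqref{3-iso} by applying the projection formula \eqref{proj-formula} to the locally free sheaf $\mathcal{O}(W)$. The core observation is that $\mathcal{O}(\tilde W)=\pi^{\ast}\mathcal{O}(W)$ and, using $\pi\circ\jmath=\imath\circ\varpi$, that $\mathcal{O}(\jmath^{\ast}\tilde W)=\jmath^{\ast}\pi^{\ast}\mathcal{O}(W)=\varpi^{\ast}\imath^{\ast}\mathcal{O}(W)=\varpi^{\ast}\mathcal{O}(\imath^{\ast}W)$, which allows the rewritings
\begin{equation*}
\Omega^{p}_{\tilde X}(\tilde W)=\Omega^{p}_{\tilde X}\otimes \pi^{\ast}\mathcal{O}(W), \qquad \Omega^{p}_{E}(\jmath^{\ast}\tilde W)=\Omega^{p}_{E}\otimes \varpi^{\ast}\mathcal{O}(\imath^{\ast}W).
\end{equation*}

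For part (i), I would apply \eqref{proj-formula} with $f=\pi$, $\mathcal{E}=\Omega^{p}_{\tilde X}$ and $\mathcal{F}=\mathcal{O}(W)$ to obtain $\pi_{\ast}\Omega^{p}_{\tilde X}(\tilde W)\cong (\pi_{\ast}\Omega^{p}_{\tilde X})\otimes \mathcal{O}(W)$, and then tensor the first isomorphism of \eqref{3-iso} with $\mathcal{O}(W)$ to identify the right-hand side with $\Omega^{p}_{X}(W)$. Part (ii) is identical in structure with $(\pi,X)$ replaced by $(\varpi,Z)$ and $W$ by $\imath^{\ast}W$, invoking the second isomorphism of \eqref{3-iso}. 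For part (iii), the same device gives $R^{s}\pi_{\ast}\Omega^{p}_{\tilde X}(\tilde W)\cong (R^{s}\pi_{\ast}\Omega^{p}_{\tilde X})\otimes \mathcal{O}(W)$ on the left; on the right, the projection formula for $\varpi$ followed by the (standard) projection formula for the closed immersion $\imath$ (for which $\imath_{\ast}$ is exact) produces $\imath_{\ast}R^{s}\varpi_{\ast}\Omega^{p}_{E}(\jmath^{\ast}\tilde W)\cong (\imath_{\ast}R^{s}\varpi_{\ast}\Omega^{p}_{E})\otimes \mathcal{O}(W)$, so that tensoring the third isomorphism of \eqref{3-iso} with $\mathcal{O}(W)$ completes the identification.

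The only real subtlety is a naturality check: one must verify that the isomorphism produced by the projection formula, composed with the scalar isomorphism of \eqref{3-iso} tensored with $\mathcal{O}(W)$, actually coincides with the pullback map $\pi^{\ast}$ (resp.\ $\varpi^{\ast}$, $\jmath^{\ast}$) asserted in the statement. This amounts to tracing through the adjunction $\mathcal{O}(W)\to \pi_{\ast}\pi^{\ast}\mathcal{O}(W)$ underlying the projection formula and exploiting the fact that the scalar isomorphisms in \eqref{3-iso} are themselves realized by the corresponding pullbacks. I expect this to be a routine naturality diagram chase, and it is essentially the only content beyond the trivial-bundle case already handled by Verdier, Gros and Gull\'{e}n--Navarro Aznar.
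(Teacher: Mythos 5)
Your proposal is correct in outline, and for parts (i) and (ii) it coincides with what the paper does: the paper first establishes the scalar isomorphisms $\pi^{\ast}:\Omega^{p}_{X}\to\pi_{\ast}\Omega^{p}_{\tilde{X}}$ (via Hartogs extension) and $\varpi^{\ast}:\Omega^{p}_{Z}\to\varpi_{\ast}\Omega^{p}_{E}$ (via local triviality and the K\"unneth formula), and then tensors with $\mathcal{O}(W)$ using the projection formula \eqref{proj-formula}, exactly as you propose. For part (iii), however, your route is genuinely different. You reduce to the third isomorphism of \eqref{3-iso} by applying \eqref{proj-formula} to $\pi$, $\varpi$ and the closed immersion $\imath$, which is a clean and economical argument --- \emph{provided} the scalar case of (iii) for compact complex manifolds is taken as an established input. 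That is precisely the point the paper is cautious about: it attributes the scalar statement to Gros and Guill\'en--Navarro Aznar only for smooth schemes over a field, and says Stelzig merely ``indicated'' it for complex manifolds; the authors then state explicitly that they re-run the whole argument of \cite[Proposition 3.3]{GNA02} in the bundle-valued setting ``since there is no available literature.'' Concretely, the paper's proof of (iii) twists the structure-sheaf and conormal sequences by $\mathcal{O}(m)$, invokes Bott vanishing on the fibres together with Grauert continuity to kill $R^{q}\varpi_{\ast}(\Omega^{k}_{E}\otimes\mathcal{O}_{E}(l))$ for $l\geq1$, uses Grauert--Remmert vanishing for $l\gg0$ and a descending induction on $m$ on the $\tilde{X}$ side, and only uses the projection formula locally to strip off $W$ at each vanishing step. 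So the trade-off is: your argument buys brevity and isolates the bundle as a purely formal decoration, at the cost of outsourcing the analytic content to a scalar statement whose status in the literature (for compact complex manifolds) the paper does not regard as settled; the paper's longer argument is self-contained and in particular furnishes a proof of the scalar case as the special case $W=\mathcal{O}_{X}$. Your flagged naturality check --- that the composite of the projection-formula isomorphisms with the scalar $\jmath^{\ast}\otimes\mathrm{id}$ agrees with the bundle-valued $\jmath^{\ast}$ --- is indeed the only remaining issue and does follow from the naturality of \eqref{proj-formula} applied to the morphism $\Omega^{p}_{\tilde{X}}\to\jmath_{\ast}\Omega^{p}_{E}$, but it should be written out rather than merely asserted if this reduction is to replace the paper's proof.
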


\begin{proof}
(i)
The morphism
$
\pi^{\ast}: \Omega_{X}^{p}
\rightarrow
\pi_{\ast}\Omega_{\tilde{X}}^{p}
$
is naturally induced by the pullback of holomorphic differential forms
\begin{eqnarray*}
 \pi^{\ast}(V):\Gamma(V, \Omega_{X}^{p})
  &\rightarrow& \Gamma(\tilde{V}, \Omega_{\tilde{X}}^{p}) \\
  \alpha &\mapsto& (\pi|_{\tilde{V}})^{\ast} \alpha
\end{eqnarray*}
for every open subset $V\subset X$ and $\tilde{V}:=\pi^{-1}(V)$.

First we prove that the morphism
$
\pi^{\ast}: \Omega_{X}^{p}
\rightarrow
\pi_{\ast}\Omega_{\tilde{X}}^{p}
$ is isomorphic.
It suffices to show that $\pi^{\ast}(V)$ is isomorphic for any open set $V\subset X$ with $V\cap Z\neq \emptyset$ since $\pi: \tilde{X}-E\rightarrow X-Z$ is biholomorphic.
It is easy to see that $\pi^{\ast}(V)$ is injective.
In fact,
note that
$
\pi|_{\tilde{V}-\tilde{V}\cap E}: \tilde{V}-\tilde{V}\cap E \rightarrow V-V\cap Z
$
is biholomorphic.
Suppose $\beta\in \Gamma(V, \Omega_{X}^{p})$ with $(\pi|_{\tilde{V}})^{\ast}\beta=0$.
Then we obtain $\beta|_{V-V\cap Z}=(\pi|_{\tilde{V}-\tilde{V}\cap E})^{\ast}\beta=0$.
Since $\mathrm{codim}_{\mathbb{C}}V\cap Z \geq 2$,
the continuity argument gives $\beta=0$.
Moreover,
for any
$
\tilde{\alpha}
\in
\Gamma(\tilde{V},\Omega_{\tilde{X}}^{p}),
$
we define a holomorphic $p$-form on $V-V\cap Z$:
$$
\varphi:=((\pi|_{\tilde{V}- \tilde{V}\cap E})^{-1})^{\ast}(\tilde{\alpha}|_{\tilde{V}-\tilde{V}\cap E}).
$$
As $\mathrm{codim}_{\mathbb{C}}V\cap Z \geq 2$,
the Hartogs extension theorem yields a holomorphic extension $\alpha$ of $\varphi$ on $V$
such that $\alpha|_{V-V\cap Z}=\varphi$.
Then we have
$
\big((\pi|_{\tilde{V}})^{\ast}\alpha\big)|_{\tilde{V}-\tilde{V}\cap E}
=
(\pi|_{\tilde{V}- \tilde{V}\cap E})^{\ast}\varphi
=
\tilde{\alpha}|_{\tilde{V}-\tilde{V}\cap E}
$.
Again by the continuity argument, we have $(\pi|_{\tilde{V}})^{\ast}\alpha=\tilde{\alpha}$. This shows that $\pi^{\ast}(V)$ is surjective
and thus $\pi^{\ast}(V)$ is isomorphic.

Furthermore, note that
$
\Omega^{p}_{\tilde{X}}(\tilde{W})
=
\Omega^{p}_{\tilde{X}}\otimes\tilde{W}
$
and according to the projection formula, we get
$$
\pi_{\ast}\Omega_{\tilde{X}}^{p}\otimes W
\stackrel{\simeq}\longrightarrow
\pi_{\ast}(\Omega_{\tilde{X}}^{p}\otimes\pi^{\ast}W)
=\pi_{\ast}(\Omega_{\tilde{X}}^{p}\otimes\tilde{W}).
$$

(ii)
The morphism
$
\varpi^{\ast}: \Omega_{Z}^{p}
\rightarrow
\varpi_{\ast}\Omega_{E}^{p}
$
is naturally induced by the pullback of holomorphic differential forms
$$
\varpi^{\ast}(V): \Gamma(V, \Omega_{Z}^{p})
\longrightarrow
\Gamma(\varpi^{-1}(V), \Omega_{E}^{p})
$$
for any open subset $V\subset Z$.
The local trivialization of fibre bundle gives an open neighborhood for any point in $Z$, still denoted by $V$, such that
$\varpi^{-1}(V)$
is biholomorphic to
$V\times \mathbb{C}\mathrm{P}^{r-1}.$
Notice that
$$
\Gamma(\varpi^{-1}(V),\Omega^{p}_{E})
\cong
H^{p,0}_{\bar{\partial}}(V\times \mathbb{C}\mathrm{P}^{r-1}).
$$
From the K\"{u}nneth formula \cite[Corollary 19]{CFGU00} (still true with the noncompact base),
we get
\begin{eqnarray*}
  H^{p,0}_{\bar{\partial}}(V\times \mathbb{C}\mathrm{P}^{r-1})
  &\cong& \sum_{a+c=p}H^{a,0}_{\bar{\partial}}(V)
  \otimes H^{c,0}_{\bar{\partial}}(\mathbb{C}\mathrm{P}^{r-1}) \\
  &=& \Gamma(V,\Omega^{p}_{Z}).
\end{eqnarray*}
So
$
\varpi^{\ast}: \Omega_{Z}^{p}
\stackrel{\simeq}\longrightarrow
\varpi_{\ast}\Omega_{E}^{p}.
$
Moreover, we have
\begin{eqnarray*}
\varpi_{\ast}(\Omega_{E}^{p}\otimes \jmath^{\ast}\tilde{W})
&=& \varpi_{\ast}(\Omega_{E}^{p}\otimes \jmath^{\ast}\pi^{\ast}W)\\
&=& \varpi_{\ast}(\Omega_{E}^{p}\otimes \varpi^{\ast}\imath^{\ast}W) \;\;\quad\quad(\varpi^{\ast}\imath^{\ast}
= \jmath^{\ast}\pi^{\ast}).
\end{eqnarray*}
By the projection formula for $\varpi$,
it follows
$$
\Omega^{p}_{Z}(\imath^{\ast}W)
=\Omega_{Z}^{p}\otimes \imath^{\ast}W
\stackrel{\simeq}\longrightarrow
\varpi_{\ast}\Omega_{E}^{p}\otimes \imath^{\ast}W
\stackrel{\simeq}\longrightarrow
\varpi_{\ast}(\Omega_{E}^{p} \otimes \varpi^{\ast}\imath^{\ast}W).
$$

Finally, we give the proof of (iii).
Because $E$ is the exceptional divisor,
there exist two short exact sequences of vector bundles, i.e., the {\it structure sheaf sequence} and the dual of the {\it normal bundle sequence} on $\tilde{X}$ and $E$, respectively,
\begin{equation}\label{structsf-seq}
\xymatrix@C=0.5cm{
  0 \ar[r] &  \mathcal{O}_{\tilde{X}}(-E)  \ar[r] &  \mathcal{O}_{\tilde{X}} \ar[r] & \jmath_{\ast}\mathcal{O}_{E}  \ar[r] & 0; }
\end{equation}
\begin{equation}\label{normalbd-seq}
\xymatrix@C=0.5cm{
  0 \ar[r] & \mathcal{O}_{E}(-E) \ar[r] & \jmath^{\ast}\Omega_{\tilde{X}} \ar[r] & \Omega_{E} \ar[r] & 0, }
\end{equation}
where $\mathcal{O}_{E}(-E)=\jmath^{\ast}\mathcal{O}_{\tilde{X}}(-E)$.
Set $\mathcal{O}_{\tilde{X}}(1):=\mathcal{O}_{\tilde{X}}(-E)$,
$\mathcal{O}_{E}(1):=\mathcal{O}_{E}(-E)$
 and thus we have $\mathcal{O}_{E}(1)=\jmath^{\ast}\mathcal{O}_{\tilde{X}}(1)$.

For any integer $1\leq p\leq n-1$,
by tensoring the sequence \eqref{structsf-seq} with
$
\Omega^{p}_{\tilde{X}}(\tilde{W})\otimes \mathcal{O}_{\tilde{X}}(m)
$
and using the projection formula for the last term,
we obtain a short exact sequence of holomorphic vector bundles over $\tilde{X}$
\begin{equation}\label{twist-structsf-seq}
\xymatrix@C=0.5cm{
  0 \ar[r] & \Omega^{p}_{\tilde{X}}(\tilde{W})
  \otimes \mathcal{O}_{\tilde{X}}(m+1)  \ar[r] &  \Omega^{p}_{\tilde{X}}(\tilde{W})\otimes
  \mathcal{O}_{\tilde{X}}(m) \ar[r] &
  \Omega^{p}_{\tilde{X}}(\tilde{W})
  \otimes \jmath_{\ast} \mathcal{O}_{E}(m)  \ar[r] & 0. }
\end{equation}
Taking $p$-th exterior wedge of \eqref{normalbd-seq},
we obtain an exact sequence
\begin{equation}\label{exact-1.0}
\xymatrix@C=0.5cm{
  0 \ar[r] &  \Omega_{E}^{p-1}\otimes \mathcal{O}_{E}(1) \ar[r] & \jmath^{\ast}\Omega_{\tilde{X}}^{p}  \ar[r] & \Omega_{E}^{p}\ar[r] & 0. }
\end{equation}
Tensoring $\jmath^{\ast}\tilde{W}\otimes\mathcal{O}_{E}(m)$ with \eqref{exact-1.0},
we get another short exact sequence of vector bundles
\begin{equation}\label{twist-normalbd-seq}
\xymatrix@C=0.5cm{
  0 \ar[r] &  \Omega^{p-1}_{E}(\jmath^{\ast}\tilde{W})
  \otimes \mathcal{O}_{E}(m+1) \ar[r] & \jmath^{\ast}\Omega^{p}_{\tilde{X}}(\tilde{W})
  \otimes \mathcal{O}_{E}(m)  \ar[r] & \Omega^{p}_{E}(\jmath^{\ast}\tilde{W})
  \otimes  \mathcal{O}_{E}(m)\ar[r] & 0. }
\end{equation}

From now on, we fix an integer $q\geq 1$ and consider the higher direct images of the first and last terms in \eqref{twist-normalbd-seq}.
Observe that the line bundle
$$
\mathcal{O}_{\tilde X}(1)|_{E}=\mathcal{O}_{\tilde X}(-E)|_E\cong \mathcal{O}_{\mathbb{P}(N_{Z/X})}(1)
$$
is positive \cite[Lemma 3.26]{Voi02}.
Set $\mathcal{F}^{k}(l)=\Omega_{E}^{k}\otimes \mathcal{O}_{E}(l)$
and the Bott vanishing theorem \cite[Theorem 5.2]{Kod05}
implies for any $l\geq 1$ and $z\in Z$,
$$
H^{q}(E_{z}, \mathcal{F}^{k}(l)_{z})\cong
H^{q}(\mathbb{P}^{r-1}, \Omega_{\mathbb{P}^{r-1}}^{k}\otimes \mathcal{O}_{\mathbb{P}^{r-1}}(l))=0.
$$
So we get
$R^{q}\varpi_{\ast}(\Omega_{E}^{k}\otimes \mathcal{O}_{E}(l))=0$ for any integer $l\geq1$.
Actually, from the Grauert continuity theorem \cite[Theorem 4.12 (ii) of Chapter III]{BS76},
we know that
$R^{q}\varpi_{\ast}\mathcal{F}^{k}(l)$ is a locally free sheaf of rank $\mathrm{dim}\,H^{q}\bigl(E_{z}, \mathcal{F}^{k}(l)_{z}\bigr)=0$.
Moreover, there hold
\begin{equation}\label{bott-zero-1}
R^{q}\varpi_{\ast}\bigl
(\Omega^{p-1}_{E}(\jmath^{\ast}\tilde{W})\otimes \mathcal{O}_{E}(m+1)\bigr)
=
\mathcal{O}(\imath^{\ast}W)\otimes R^{q}\varpi_{\ast}\bigl
(\Omega_{E}^{p-1}\otimes \mathcal{O}_{E}(m+1)\bigr)
=0,\;\;\, \mathrm{for\,\,\,any}\,\,m\geq 0
\end{equation}
and similarly
\begin{equation*}\label{bott-zero-2}
R^{q}\varpi_{\ast}\bigl
(\Omega^{p}_{E}(\jmath^{\ast}\tilde{W})
\otimes  \mathcal{O}_{E}(m)\bigr)=0,\;\;\,
 \mathrm{for\,\,\,any}\,\,m\geq 1.
\end{equation*}
Hence,
by the long exact sequence of higher direct images of \eqref{twist-normalbd-seq},
we obtain
\begin{equation}\label{vansh1}
R^{q}\varpi_{\ast}\bigl
(\jmath^{\ast}\Omega^{p}_{\tilde{X}}(\tilde{W})\otimes \mathcal{O}_{E}(m)\bigr)=0,\;\;\,
 \mathrm{for\,\,\,any}\,\,m\geq 1.
\end{equation}
If $m=0$, then \eqref{twist-normalbd-seq} becomes
\begin{equation}\label{twist-normalbd-seq-m=0}
\xymatrix@C=0.5cm{
  0 \ar[r] &  \Omega^{p-1}_{E}(\jmath^{\ast}\tilde{W})\otimes \mathcal{O}_{E}(1) \ar[r] & \jmath^{\ast}\Omega^{p}_{\tilde{X}}(\tilde{W})  \ar[r] & \Omega^{p}_{E}(\jmath^{\ast}\tilde{W})\ar[r] & 0. }
\end{equation}
Likewise, using the vanishing result \eqref{bott-zero-1} of the case $m=0$, one sees that the exactness of the long exact sequence of higher direct images associated to \eqref{twist-normalbd-seq-m=0} implies the following isomorphism
\begin{equation}\label{import1}
R^{q}\varpi_{\ast}\jmath^{\ast}
\Omega_{\tilde{X}}^{p}(\tilde{W})
\stackrel{\simeq}\longrightarrow
R^{q}\varpi_{\ast}
\Omega^{p}_{E}(\jmath^{\ast}\tilde{W}),\;\;\,
\mathrm{for\,\,any}\,\,q\geq 1.
\end{equation}

Next we construct an isomorphism between the higher direct images of the third term in \eqref{twist-structsf-seq} and the second term in \eqref{twist-normalbd-seq}.
Based on the exactness of the functor $\jmath_{\ast}$ and $\imath_{\ast}$
we have
\begin{eqnarray*}
R^{q}\pi_{\ast}\bigl(\Omega^{p}_{\tilde{X}}(\tilde{W})
\otimes \jmath_{\ast} \mathcal{O}_{E}(m)\bigr)
&\stackrel{\simeq}\longrightarrow&  R^{q}\pi_{\ast}\bigl(\jmath_{\ast} (\jmath^{\ast} \Omega^{p}_{\tilde{X}}(\tilde{W})
\otimes \mathcal{O}_{E}(m))\bigr)
\;\;\;\;\; (\textrm{by projection formula}) \\
&\cong& R^{q}(\pi \circ \jmath)_{\ast} \bigl(\jmath^{\ast}\Omega^{p}_{\tilde{X}}(\tilde{W})
\otimes \mathcal{O}_{E}(m)\bigr)
\;\;\; (\jmath\,\,\textrm{is a closed embedding}) \\
&\cong& R^{q}(\imath \circ \varpi)_{\ast} \bigl(\jmath^{\ast}\Omega^{p}_{\tilde{X}}(\tilde{W})
\otimes \mathcal{O}_{E}(m)\bigr)
\;\; \,(\pi\circ \jmath=\imath \circ \varpi) \\
&\cong&\imath_{\ast}R^{q}\varpi_{\ast} \bigl(\jmath^{\ast}\Omega^{p}_{\tilde{X}}(\tilde{W})
\otimes \mathcal{O}_{E}(m)\bigr)\;\;
\;\;\;\;\;(\imath\,\,\textrm{is a closed embedding}).
\end{eqnarray*}
In summary, for $m\geq0$ there exists an isomorphism
\begin{equation}\label{i-iso}
R^{q}\pi_{\ast}\bigl(\Omega^{p}_{\tilde{X}}(\tilde{W}) \otimes \jmath_{\ast} \mathcal{O}_{E}(m)\bigr)
\stackrel{\simeq}\longrightarrow
\imath_{\ast}R^{q}\varpi_{\ast} \bigl(\jmath^{\ast}\Omega^{p}_{\tilde{X}}(\tilde{W})
\otimes \mathcal{O}_{E}(m)\bigr).
\end{equation}
Therefore, if $m\geq 1$, then combining \eqref{vansh1} with
\eqref{i-iso}, we get
\begin{equation}\label{vansh1.1}
R^{q}\pi_{\ast}\bigl(\Omega^{p}_{\tilde{X}}(\tilde{W})
\otimes \jmath_{\ast} \mathcal{O}_{E}(m)\bigr)=0.
\end{equation}
In the case of $m=0$, \eqref{import1} and \eqref{i-iso} imply
\begin{equation*}\label{import2}
R^{q}\pi_{\ast}(\Omega_{\tilde{X}}^{p}(\tilde{W}) \otimes \jmath_{\ast} \mathcal{O}_{E})
\stackrel{\simeq}\longrightarrow
\imath_{\ast}R^{q}\varpi_{\ast} \jmath^{\ast}\Omega_{\tilde{X}}^{p}(\tilde{W})
\stackrel{\simeq}\longrightarrow
\imath_{\ast}R^{q}\varpi_{\ast} \Omega_{E}^{p}(\jmath^{*}\tilde{W}).
\end{equation*}
Hence, to prove that the assertion holds, i.e.,
$$
R^{q}\pi_{\ast} \Omega^{p}_{\tilde{X}}(\tilde{W})
\stackrel{\simeq}\longrightarrow
\imath_{\ast}R^{q}\varpi_{\ast}
\Omega^{p}_{E}(\jmath^{\ast}\tilde{W}),
$$
it suffices to show
$
R^{q}\pi_{\ast}\Omega^{p}_{\tilde{X}}(\tilde{W})
\stackrel{\simeq}\longrightarrow
R^{q}\pi_{\ast}(\Omega_{\tilde{X}}^{p}(\tilde{W}) \otimes \jmath_{\ast} \mathcal{O}_{E}).
$

Let us turn back to the long exact sequence of higher direct images associated to \eqref{twist-structsf-seq}.
On one hand, since $X$ is compact and the blow-up morphism $\pi$ is projective (cf. \cite[Remarks 2.1 (0)]{GPR94}), the projection formula and the Grauert-Remmert theorem \cite[Theorem 2.1 (B) of  Chapter IV]{BS76} give
\begin{equation}\label{serre-zero}
R^{q}\pi_{\ast}\bigl
(\Omega_{\tilde{X}}^{p}(\tilde{W})\otimes \mathcal{O}_{\tilde{X}}(l)\bigr)
=\mathcal{O}(W)\otimes R^{q}\pi_{\ast}\bigl
(\Omega_{\tilde{X}}^{p}\otimes \mathcal{O}_{\tilde{X}}(l)\bigr)
=0
\end{equation}
for any $l\geq l_0$ with some integer $l_0=l_0(X,\Omega_{\tilde{X}}^{p})$.
On the other hand, the vanishing result \eqref{vansh1.1} implies that the morphism
\begin{equation}\label{vanishsurj}
R^{q}\pi_{\ast}\bigl
(\Omega_{\tilde{X}}^{p}(\tilde{W})\otimes \mathcal{O}_{\tilde{X}}(m+1)\bigr)
\rightarrow
R^{q}\pi_{\ast}\bigl
(\Omega_{\tilde{X}}^{p}(\tilde{W})\otimes \mathcal{O}_{\tilde{X}}(m)\bigr)
\end{equation}
is surjective for any $m\geq 1$.
From \eqref{serre-zero} and \eqref{vanishsurj} we get
\begin{equation}\label{last-zero}
R^{q}\pi_{\ast}\bigl
(\Omega_{\tilde{X}}^{p}(\tilde{W})\otimes \mathcal{O}_{\tilde{X}}(1)\bigr)=0,\;\,\,
\mathrm{for\,\,any}\,\,\,q\geq 1
\end{equation}
by an induction for the index $m$.
Let $m=0$ in the long exact sequence of higher direct images associated to \eqref{twist-structsf-seq}.
On account of the vanishing result \eqref{last-zero} and the exactness we obtain the desired isomorphism
$$
R^{q}\pi_{\ast}\Omega^{p}_{\tilde{X}}(\tilde{W})
\stackrel{\simeq}\longrightarrow R^{q}\pi_{\ast}(\Omega_{\tilde{X}}^{p}(\tilde{W})\otimes \jmath_{\ast} \mathcal{O}_{E})
$$
and this completes the proof.
\end{proof}
\subsection{Relative Dolbeault sheaves}
Let $X$ be a compact complex manifold and let
$\imath: Z\hookrightarrow X$
be a closed complex submanifold.
There exist two natural morphisms of sheaves:
$\imath^{\ast}:\Omega_{X}^{p}\rightarrow \imath_{\ast}\Omega^{p}_{Z}$ and $\imath^{\ast}:\A_{X}^{p, q} \rightarrow \imath_{\ast}\A_{Z}^{p, q}$
which are induced by the pullback as follows.
For any open subset $V\subset X$, we define the morphisms
\begin{eqnarray}\label{hol-surjmap}
  \imath^{\ast}(V):\Gamma(V,\Omega_{X}^{p})&\rightarrow&
  \Gamma(V,\imath_{\ast}\Omega^{p}_{Z})=\Gamma(V\cap Z, \Omega^{p}_{Z}) \nonumber \\
  \alpha &\mapsto&  (\imath_{V\cap Z})^{\ast} \alpha,
\end{eqnarray}
and
\begin{eqnarray}\label{smooth-surjmap}
\imath^{\ast}(V):\Gamma(V, \A_{X}^{p, q})&\rightarrow& \Gamma(V,\imath_{\ast}\A_{Z}^{p, q} )=\Gamma(V\cap Z, \A^{p,q}_{Z}) \nonumber \\
\alpha &\mapsto& (\imath_{V\cap Z})^{\ast} \alpha,
\end{eqnarray}
where $\imath_{V\cap Z}:V\cap Z\rightarrow V$ is the holomorphic inclusion.
It is direct to check that the maps $\imath^{\ast}(V)$ in \eqref{hol-surjmap} and \eqref{smooth-surjmap} are homomorphisms of $\mathcal{O}_{X}(V)$-module and  $\mathcal{C}_{X}^{\infty}(V)$, respectively.
Hence, $\imath^{\ast}:\Omega_{X}^{p}\rightarrow \imath_{\ast}\Omega^{p}_{Z}$ is a morphism of $\mathcal{O}_{X}$-modules and $\imath^{\ast}:\A_{X}^{p, q} \rightarrow \imath_{\ast}\A_{Z}^{p, q}$
is a morphism of $\mathcal{C}_{X}^{\infty}$-modules.
For any $0\leq p,q\leq n$, consider the kernel sheaves
$
\K_{X,Z}^{p}:=\ker\big(\Omega_{X}^{p} \stackrel{\imath^{\ast}}\longrightarrow  \imath_{\ast}\Omega_{Z}^{p}\big)
$
and
$
\K_{X,Z}^{p,q}:=\ker\big(\A_{X}^{p,q} \stackrel{\imath^{\ast}}\longrightarrow  \imath_{\ast}\A_{Z}^{p,q}\big).
$
\begin{defn}\label{rds}
We say that $\K_{X,Z}^{p}$ and $\K_{X,Z}^{p,q}$ are the {\it $p$-th and $(p,q)$-th relative Dolbeault sheaves} of $X$ with respect to $Z$, respectively.
\end{defn}

From the definitions, $\K_{X,Z}^{p}$ is a coherent sheaf of $\mathcal{O}_{X}$-module
and $\K_{X,Z}^{p,q}$ is a sheaf of $\mathcal{C}_{X}^{\infty}$-module.
If $p=0$, then $\K_{X,Z}^{0}$ is the ideal sheaf $\mathcal{I}_{Z}$ of $Z$ in $X$;
if $p=n$, then $\K_{X,Z}^{n}=\Omega_{X}^{n}$.
Moreover, we have the following result.
\begin{prop}
\label{redol}
With the above setting, there hold
\begin{enumerate}
\item[(i)] $\K_{X,Z}^{p,\bullet}$ is a fine resolution of $\K_{X,Z}^{p}$;
\item[(ii)] $\imath^{\ast}:\Omega_{X}^{p}\rightarrow \imath_{\ast}\Omega_{Z}^{p}$ is a surjective morphism of $\mathcal{O}_{X}$-modules;
\item[(iii)] $\imath^{\ast}:\A_{X}^{p,q}\rightarrow \imath_{\ast}\A_{Z}^{p,q}$ is a surjective morphism of $\mathcal{C}_{X}^{\infty}$-modules.
\end{enumerate}
\end{prop}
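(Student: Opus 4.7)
We handle (ii) and (iii) first since both feed into the proof of (i). Both are sheaf surjectivity statements, so it suffices to check stalkwise, and outside $Z$ both are vacuous because the stalks of $\imath_{\ast}\Omega_{Z}^{p}$ and $\imath_{\ast}\A_{Z}^{p,q}$ vanish there. Fix $x \in Z$ and choose holomorphic coordinates $(z_{1},\ldots,z_{n})$ on an open neighborhood $U$ of $x$ in $X$ with $Z \cap U = \{z_{n-r+1} = \cdots = z_{n} = 0\}$. Any germ at $x$ of a holomorphic $p$-form on $Z$ is represented by a sum $\sum_{|I|=p,\, I \subseteq \{1,\ldots,n-r\}} f_{I}\,dz_{I}$ with $f_{I} \in \mathcal{O}_{Z}(Z \cap U)$; regarding each $f_{I}$ as a holomorphic function on $U$ independent of the transverse coordinates $z_{n-r+1},\ldots,z_{n}$ yields a holomorphic $p$-form on $U$ whose $\imath$-pullback is the prescribed germ. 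This proves (ii).

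For (iii) we repeat the procedure smoothly. A germ at $x$ of a smooth $(p,q)$-form on $Z$ is written in the same coordinates as a finite sum of terms $g\,dz_{I} \wedge d\bar{z}_{J}$ with $I, J \subseteq \{1,\ldots,n-r\}$ of cardinalities $p, q$ and $g \in \mathcal{C}^{\infty}_{Z}(Z \cap U)$. Extending each coefficient $g$ to $U$ by composition with the smooth projection $(z_{1},\ldots,z_{n}) \mapsto (z_{1},\ldots,z_{n-r},0,\ldots,0)$ produces a smooth $(p,q)$-form on $U$ restricting to the given germ, and hence $\imath^{\ast}$ is surjective on stalks.

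For (i) there are three items to verify. First, $\bar{\partial}$ preserves $\K_{X,Z}^{p,q}$: if $\imath^{\ast}\alpha = 0$, then $\imath^{\ast}(\bar{\partial}\alpha) = \bar{\partial}(\imath^{\ast}\alpha) = 0$ by naturality of $\bar{\partial}$ under pullback along the holomorphic inclusion. Second, $\K_{X,Z}^{p,q}$ is a subsheaf of $\A_{X}^{p,q}$ closed under multiplication by $\mathcal{C}^{\infty}_{X}$ (since $\imath^{\ast}(f\alpha) = \imath^{\ast}(f)\,\imath^{\ast}(\alpha) = 0$), hence a $\mathcal{C}^{\infty}_{X}$-module and therefore fine, because $\mathcal{C}^{\infty}_{X}$ is a soft sheaf of rings admitting partitions of unity. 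Third, part (iii) together with exactness of $\imath_{\ast}$ yields a short exact sequence of complexes of sheaves
\[
0 \longrightarrow \K_{X,Z}^{p,\bullet} \longrightarrow \A_{X}^{p,\bullet} \xrightarrow{\imath^{\ast}} \imath_{\ast}\A_{Z}^{p,\bullet} \longrightarrow 0,
\]
whose associated long exact sequence of cohomology sheaves, combined with the Dolbeault resolutions on $X$ and (after pushforward) on $Z$, identifies $\mathcal{H}^{0}(\K_{X,Z}^{p,\bullet})$ with $\ker(\imath^{\ast}:\Omega_{X}^{p} \to \imath_{\ast}\Omega_{Z}^{p}) = \K_{X,Z}^{p}$ and forces $\mathcal{H}^{q}(\K_{X,Z}^{p,\bullet}) = 0$ for $q \geq 1$; here the surjectivity from (ii) is needed to close the sequence in degree one. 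The main obstacle is the local extension step in (ii) and (iii)---essentially a choice of adapted coordinates and a transverse extension---after which the resolution statement of (i) reduces to standard diagram chasing with the two Dolbeault resolutions.
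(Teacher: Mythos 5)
Your proof is correct and follows essentially the same route as the paper: for (ii) and (iii) the paper also checks surjectivity on stalks at points of $Z$ by extending forms via the local holomorphic (resp.\ smooth) projection onto adapted coordinates, and for (i) it likewise reduces to the Dolbeault--Grothendieck lemma. Your treatment of (i) is in fact slightly more complete than the paper's one-line citation, since you make explicit the long exact sequence of cohomology sheaves that converts Dolbeault--Grothendieck on $X$ and $Z$, together with the surjectivity in (ii)--(iii), into exactness of $\K_{X,Z}^{p,\bullet}$.
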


\begin{proof}
(i)
By the Dolbeault-Grothendieck lemma \cite[Proposition 2.31]{Voi02}, the sheaf complex $\K_{X,Z}^{p,\bullet}$ is exact, and hence $\K_{X,Z}^{p,\bullet}$ is a fine resolution of $\K_{X,Z}^{p}$.

(ii)
It suffices to verify that for any $x\in Z$ the morphism of stalks
$
\imath^{\ast}_{x}:(\Omega^{p}_{X})_{x}
\rightarrow(\imath_{\ast}\Omega^{p}_{Z})_{x}
$
is surjective.
Let $(\mathcal{U}; z_{1},z_{2},\cdots,z_{n})$ be a local coordinate chart of $x$ such that
$$\mathcal{U}\cap Z=\{z_{n-r+1}=\cdots=z_{n}=0\}.$$
Then there exists a holomorphic projection given by
\begin{equation*}
\tau:\mathcal{U} \rightarrow \mathcal{U}\cap Z,\,\,\,
(z_{1},\cdots,z_{n}) \mapsto (z_{1},\cdots,z_{n-r}).
\end{equation*}
For any germ $\alpha_{x} \in (\imath_{\ast}\Omega_{Z}^{p})_{x}$ we can choose a representative $(V,\alpha)$,
where $\alpha$ is a holomorphic $p$-form on $V\cap Z$.
Particularly, we can choose $V$ small enough such that $V\subset \mathcal{U}$ and then the restriction of $\tau$ on $V$ gives rise to a holomorphic map
$\tau_{V}:V\rightarrow V\cap Z$
such that
$\tau_{V}\circ \iota_{V\cap Z}=\textmd{id}_{V\cap Z}$.
Let $\beta=(\tau_{V})^{\ast}(\alpha)$. Then $(V,\beta)$ represents a germ, denoted by $\beta_{x}$, in the stalk $(\Omega^{p}_{X})_{x}$.
From definition, we get
$$
(\imath_{V\cap Z})^{\ast}\beta=(\imath_{V\cap Z})^{\ast}\bigl((\tau_{V})^{\ast}(\alpha)\bigr)
=(\tau_{V}\circ\imath_{V\cap Z})^{\ast}(\alpha)=\alpha.
$$
It follows that $\imath^{\ast}_{x}(\beta_{x})=\alpha_{x}$, namely, $\imath^{\ast}_{x}$ is surjective.

The proof of (iii) is similar to (ii); see also \cite[Lemma 3.9]{RYY}$_{v3}$.
\end{proof}

By Proposition \ref{redol} (i),
we get the isomorphisms
$$
H^{q}(X, \K_{X,Z}^{p})\cong \mathbb{H}^{q}(X, \K_{X,Z}^{p,\bullet})\cong H^{p,q}(X,Z),
$$
where  $H^{p,q}(X,Z)$ is the relative Dolbeault cohomology of $X$ with respect to $Z$ defined in \cite{RYY}$_{v4}$.
Moreover, the assertion Proposition \ref{redol} (ii) yields a short exact sequence of $\mathcal{O}_{X}$-modules
\begin{equation}\label{rel-Dol-sequ-holom}
\xymatrix@C=0.5cm{
  0 \ar[r] & \K^{p}_{X,Z} \ar[r]^{} & \Omega_{X}^{p} \ar[r]^{\imath^{\ast}} & \imath_{\ast}\Omega_{Z}^{p} \ar[r] & 0,}
\end{equation}
and the assertion Proposition \ref{redol} (iii)
induces a short exact sequence of $\mathcal{C}^{\infty}_{X}$-modules:
\begin{equation}\label{rel-Dol-sequ-smooth}
\xymatrix@C=0.5cm{
  0 \ar[r] & \K^{p,q}_{X,Z} \ar[r]^{} & \A_{X}^{p,q} \ar[r]^{\imath^{\ast}} & \imath_{\ast}\A_{Z}^{p,q} \ar[r] & 0.}
\end{equation}

As $W$ is a holomorphic vector bundle over $X$, the sheaf of holomorphic sections of $W$ is a sheaf of $\mathcal{O}_{X}$-modules.
Consequently, the tensor functor $-\otimes_{\mathcal{O}_{X}}\mathcal{O}(W)$ is exact.
Tensoring \eqref{rel-Dol-sequ-holom} with $\mathcal{O}(W)$,
one gets the short sequence of $\mathcal{O}_{X}$-modules
\begin{equation}\label{Z-holomor-seq}
\xymatrix@C=0.5cm{
  0 \ar[r] & \mathscr{K}^{p}_{X,Z} \otimes_{\mathcal{O}_{X}} \mathcal{O}(W) \ar[r]^{} & \Omega_{X}^{p} \otimes_{\mathcal{O}_{X}} \mathcal{O}(W)\ar[r]^{\imath^{\ast}} & \imath_{\ast} \Omega_{Z}^{p}\otimes_{\mathcal{O}_{X}} \mathcal{O}(W) \ar[r] & 0}
\end{equation}
is exact.
Note that
$
\Omega^{p}_{X}(W)=\Omega_{X}^{p} \otimes_{\mathcal{O}_{X}} W\cong\mathcal{O}(\wedge^{p}T^{\prime}X\otimes_{\mathbb{C}}W).
$
From the projection formula, we have
\begin{eqnarray*}
\imath_{\ast} \Omega_{Z}^{p} \otimes_{\mathcal{O}_{X}} \mathcal{O}(W)
&\stackrel{\simeq}\longrightarrow&
\imath_{\ast}( \Omega_{Z}^{p} \otimes_{\mathcal{O}_{Z}} \imath^{\ast} \mathcal{O}(W))\\
&\cong&
\imath_{\ast}( \Omega_{Z}^{p} \otimes_{\mathcal{O}_{Z}}\mathcal{O}(\imath^{\ast}W) )\\
&=&\imath_{\ast}\Omega^{p}_{Z}(\imath^{\ast}W).
\end{eqnarray*}
Set $\mathscr{K}^{p}_{X,Z}(W)=\mathscr{K}^{p}_{X,Z} \otimes_{\mathcal{O}_{X}} \mathcal{O}(W)$.
Then the short exact sequence \eqref{Z-holomor-seq} is equal to
\begin{equation}\label{W-sheaf-exact-seq}
\xymatrix@C=0.5cm{
  0 \ar[r] & \mathscr{K}^{p}_{X,Z}(W) \ar[r]^{} & \Omega^{p}_{X}(W) \ar[r]^{\imath^{\ast}} & \imath_{\ast}\Omega^{p}_{Z}(\imath^{\ast}W) \ar[r] & 0. }
\end{equation}
Akin to \eqref{W-sheaf-exact-seq}, for the pair of the holomorphic vector bundles $(\tilde{W},\jmath^{\ast}\tilde{W})$ we have a short exact sequence of sheaves on $\tilde{X}$ as follows
\begin{equation}\label{tilde-W-sheaf-exact-seq}
\xymatrix@C=0.5cm{
  0 \ar[r] & \mathscr{K}^{p}_{\tilde{X},E}(\tilde{W}) \ar[r]^{} & \Omega^{p}_{\tilde{X}}(\tilde{W}) \ar[r]^{\jmath^{\ast}} & \jmath_{\ast}\Omega^{p}_{E}(\jmath^{\ast}\tilde{W}) \ar[r] & 0. }
\end{equation}

\begin{lem}\label{hi-dir-0}
For any $0\leq p\leq n$,
the pullback of holomorphic forms induces an isomorphism
$
\pi^{\ast}:
\mathscr{K}^{p}_{X,Z}(W)
\stackrel{\simeq}\longrightarrow
\pi_{\ast}\mathscr{K}^{p}_{\tilde{X},E}(\tilde{W});
$
moreover, for any $q\geq1$, we have
$
R^{q}\pi_{\ast}\mathscr{K}^{p}_{\tilde{X},E}(\tilde{W})=0.
$
\end{lem}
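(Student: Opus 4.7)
The plan is to apply the functor $R\pi_{*}$ to the short exact sequence \eqref{tilde-W-sheaf-exact-seq} and identify all the resulting terms by means of Lemma \ref{key-lem-1}. Explicitly, one obtains the long exact sequence
$$
0 \to \pi_{*}\K^{p}_{\tilde{X},E}(\tilde{W}) \to \pi_{*}\Omega^{p}_{\tilde{X}}(\tilde{W}) \to \pi_{*}\jmath_{*}\Omega^{p}_{E}(\jmath^{*}\tilde{W}) \to R^{1}\pi_{*}\K^{p}_{\tilde{X},E}(\tilde{W}) \to R^{1}\pi_{*}\Omega^{p}_{\tilde{X}}(\tilde{W}) \to \cdots.
$$
Lemma \ref{key-lem-1}(i) identifies $\pi_{*}\Omega^{p}_{\tilde{X}}(\tilde{W})$ with $\Omega^{p}_{X}(W)$. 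Since $\pi\circ\jmath=\imath\circ\varpi$ and $\jmath,\imath$ are closed embeddings (so $\jmath_{*},\imath_{*}$ are exact), one has $R^{q}\pi_{*}\jmath_{*}\Omega^{p}_{E}(\jmath^{*}\tilde{W}) = \imath_{*}R^{q}\varpi_{*}\Omega^{p}_{E}(\jmath^{*}\tilde{W})$ for every $q\geq 0$; Lemma \ref{key-lem-1}(ii) rewrites the $q=0$ case as $\imath_{*}\Omega^{p}_{Z}(\imath^{*}W)$, and Lemma \ref{key-lem-1}(iii) identifies it with $R^{q}\pi_{*}\Omega^{p}_{\tilde{X}}(\tilde{W})$ for $q\geq 1$.

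The crux is then to verify that, under these identifications, the natural morphism $R^{q}\pi_{*}\Omega^{p}_{\tilde{X}}(\tilde{W}) \to R^{q}\pi_{*}\jmath_{*}\Omega^{p}_{E}(\jmath^{*}\tilde{W})$ arising from the long exact sequence agrees with the restriction $\imath^{*}:\Omega^{p}_{X}(W)\to\imath_{*}\Omega^{p}_{Z}(\imath^{*}W)$ in degree $q=0$ and with the isomorphism $\jmath^{*}$ of Lemma \ref{key-lem-1}(iii) in degrees $q\geq 1$. Both assertions follow by a naturality check on an injective resolution of $\Omega^{p}_{\tilde{X}}(\tilde{W})$; I expect this bookkeeping step to be the principal obstacle, for the rest of the argument is formal homological algebra.

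Granted this compatibility, for $q\geq 1$ the map between the neighbours of $R^{q}\pi_{*}\K^{p}_{\tilde{X},E}(\tilde{W})$ is an isomorphism and exactness forces $R^{q}\pi_{*}\K^{p}_{\tilde{X},E}(\tilde{W})=0$. At the level $q=0$, the map $\imath^{*}$ is surjective by Proposition \ref{redol}(ii), so the long exact sequence truncates to a short exact sequence, and comparing with the defining sequence \eqref{W-sheaf-exact-seq} identifies $\pi_{*}\K^{p}_{\tilde{X},E}(\tilde{W})$ with $\K^{p}_{X,Z}(W)$, the morphism being $\pi^{*}$ as required.
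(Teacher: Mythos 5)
Your proposal is correct and follows essentially the same route as the paper: apply $R\pi_{\ast}$ to the sequence \eqref{tilde-W-sheaf-exact-seq}, identify the outer terms via Lemma \ref{key-lem-1} and the identity $R^{q}\pi_{\ast}\jmath_{\ast}\cong\imath_{\ast}R^{q}\varpi_{\ast}$, and conclude from the surjectivity of $\imath^{\ast}$ in degree $0$ together with the isomorphisms of Lemma \ref{key-lem-1}(iii) in degrees $q\geq 1$. The compatibility you flag as the principal obstacle is harmless here, since the maps in Lemma \ref{key-lem-1} are by construction the ones induced by the restriction morphisms appearing in the long exact sequence (the paper handles the $q=0$ identification by the same comparison of the two short exact sequences, packaged as the commutative diagram \eqref{kershisodiag}).
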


\begin{proof}
Since
$\pi\circ\jmath=\imath\circ\varpi$ \eqref{blowup-diag3},
we have
$$
\pi_{\ast}\jmath_{\ast}\Omega^{p}_{E}(\jmath^{\ast}\tilde{W})
=
\imath_{\ast}\varpi_{\ast}\Omega^{p}_{E}(\jmath^{\ast}\tilde{W}).
$$
From (i) and (ii) in Lemma \ref{key-lem-1},
there is a commutative diagram of exact sequences
\begin{equation}\label{kershisodiag}
\xymatrix@C=0.5cm{
0 \ar[r]^{} & \K_{X, Z}^{p}(W)
\ar[d]_{\pi^{\ast}}^{} \ar[r]^{} &
\Omega_{X}^{p}(W)
\ar[d]_{\pi^{\ast}}^{\cong} \ar[r]^{\imath^{\ast}} &
\imath_{\ast}\Omega_{Z}^{p}(\imath^{\ast}W)
\ar[d]_{{\varpi}^{\ast}}^{\cong} \ar[r]^{} & 0 \\
0 \ar[r] &
\pi_{\ast}\K_{\tilde{X}, E}^{p}(\tilde{W})
\ar[r]^{} &
\pi_{\ast}\Omega_{\tilde{X}}^{p}(\tilde{W})
\ar[r]^{\jmath^{\ast}} &
\pi_{\ast}\jmath_{\ast}\Omega_{E}^{p}(\jmath^{\ast}\tilde{W})
.}
\end{equation}
Observe that
$
\imath^{\ast}:\Omega^{p}_{X}(W)
\rightarrow
\imath_{\ast}\Omega^{p}_{Z}(\imath^{\ast}W)
$
is surjective by \eqref{W-sheaf-exact-seq}.
Thus the sheaf morphism
$
\jmath^{\ast}:
\pi_{\ast}\Omega^{p}_{\tilde{X}}(\tilde{W})\rightarrow
\pi_{\ast}\jmath_{\ast}\Omega^{p}_{E}(\jmath^{\ast}\tilde{W})
$
is surjective by the commutative diagram \eqref{kershisodiag}.
As a result, the morphism
$
\pi^{\ast}: \mathscr{K}^{p}_{X,Z}(W)\rightarrow \pi_{\ast}\mathscr{K}^{p}_{\tilde{X},E}(\tilde{W})
$
is isomorphic.

Now we consider the higher direct images of $\mathscr{K}_{\tilde{X},E}(W)$ along $\pi$.
The short exact sequence \eqref{tilde-W-sheaf-exact-seq}
induces a long exact sequence: for $q\geq0$,
$$
\xymatrix@C=0.5cm{
  \cdots \ar[r] &
  R^{q}\pi_{\ast}\mathscr{K}^{p}_{\tilde{X},E}(\tilde{W}) \ar[r]^{}
  & R^{q}\pi_{\ast}\Omega^{p}_{\tilde{X}}(\tilde{W}) \ar[r]^{\jmath^{\ast}}
  & R^{q}\pi_{\ast}
  \jmath_{\ast}\Omega^{p}_{E}(\jmath^{\ast}\tilde{W}) \ar[r]^{}
  & R^{q+1}\pi_{\ast}\mathscr{K}^{p}_{\tilde{X},E}(\tilde{W}) \ar[r]^{}
  & \cdots.}
$$

To prove that the higher direct images vanish,
we need the following isomorphisms
\begin{eqnarray}\label{import-isos-on-E}
R^{q}\pi_{\ast}\jmath_{\ast}\Omega^{p}_{E}(\jmath^{\ast}\tilde{W})
&\cong& R^{q}(\pi\circ\jmath)_{\ast}\Omega^{p}_{E}(\jmath^{\ast}\tilde{W}) \nonumber \\
&\cong& R^{q}(\imath\circ\varpi)_{\ast}\Omega^{p}_{E}(\jmath^{\ast}\tilde{W})\nonumber \\
&\cong& \imath_{\ast}R^{q}\varpi_{\ast}\Omega^{p}_{E}(\jmath^{\ast}\tilde{W}).
\end{eqnarray}
Combining Lemma \ref{key-lem-1} (iii) with the above equality we get
$$
\xymatrix@C=0.5cm{
  \cdots \ar[r] &
  R^{q}\pi_{\ast}\mathscr{K}^{p}_{\tilde{X},E}(\tilde{W}) \ar[r]^{}
  & R^{q}\pi_{\ast}\Omega^{p}_{\tilde{X}}(\tilde{W}) \ar[r]^{\jmath^{\ast}\;\;\;}_{\cong\;\;\;}
  & \imath_{*}R^{q}\varpi_{\ast}
  \Omega^{p}_{E}(\jmath^{\ast}\tilde{W}) \ar[r]^{}
  & R^{q+1}\pi_{\ast}\mathscr{K}^{p}_{\tilde{X},E}(\tilde{W}) \ar[r]^{}
  & \cdots}
$$
with $q\geq1$.
The exactness of the above sequence induces
$
R^{q}\pi_{\ast}\mathscr{K}^{p}_{\tilde{X},E}(\tilde{W})=0,
$
for any $q\geq 2$.

It remains to show that $R^{1}\pi_{\ast}\mathscr{K}^{p}_{\tilde{X},E}(\tilde{W})$
vanishes.
Consider the exact sequence
$$
\xymatrix@C=0.4cm{
\pi_{\ast}\Omega^{p}_{\tilde{X}}(\tilde{W})
  \ar[r]^{\jmath^{\ast}} & \pi_{\ast}\jmath_{\ast}\Omega^{p}_{E}(\jmath^{\ast}\tilde{W})
  \ar[r]^{\rho} &
  R^{1}\pi_{\ast}\mathscr{K}^{p}_{\tilde{X},E}(\tilde{W})
  \ar[r]^{\kappa} & R^{1}\pi_{\ast}\Omega^{p}_{\tilde{X}}(\tilde{W})
  \ar[r]^{\cong\quad} & \imath_{\ast}R^{1}\varpi_{\ast}\Omega^{p}_{E}
  (\jmath^{\ast}\tilde{W}). &  }
$$
The exactness means $\mathrm{Im}\,(\kappa)=0$ and
$\ker\,(\kappa)=
R^{1}\pi_{\ast}\mathscr{K}^{p}_{\tilde{X},E}(\tilde{W})
=\mathrm{Im}\,(\rho)$, i.e.,
$\rho$ is surjective.
So we get
\begin{equation*}
  R^{1}\pi_{\ast}\mathscr{K}^{p}_{\tilde{X},E}(\tilde{W})
  \cong \pi_{\ast}\jmath_{\ast}
  \Omega^{p}_{E}(\jmath^{\ast}\tilde{W})/
  \ker\,(\rho)
  = \pi_{\ast}\jmath_{\ast}
  \Omega^{p}_{E}(\jmath^{\ast}\tilde{W})/
  \mathrm{Im}\,(\jmath^{\ast})\,\,\,
  = 0
\end{equation*}
because $\jmath^{*}$ is surjective.

This lemma also can be proved by \cite[Lemma 2.4]{Men18} and the projection formula.
\end{proof}

We are ready to present the most important result of this section which plays a significant role in the proof of Theorem \ref{thm1}.
\begin{lem}\label{kersheaf-iso}
For any $q\in \mathbb{N}$, the pullback $\pi^{\ast}$ induces an isomorphism as abelian groups
\begin{equation}\label{crd-iso}
\pi^{\ast}: H^{q}(X, \K_{X,Z}^{p}(W)) \stackrel{\simeq}\longrightarrow H^{q}(\tilde{X}, \K_{\tilde{X},E}^{p}(\tilde{W})).
\end{equation}
\end{lem}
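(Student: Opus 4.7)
The plan is to deduce this isomorphism directly from the preceding Lemma \ref{hi-dir-0} by invoking the Leray spectral sequence for the blow-up morphism $\pi:\tilde{X}\to X$ applied to the sheaf $\mathcal{F}:=\K_{\tilde{X},E}^{p}(\tilde{W})$. Concretely, I would write down the standard Leray spectral sequence
\begin{equation*}
E_{2}^{a,b}=H^{a}\bigl(X,\,R^{b}\pi_{\ast}\K_{\tilde{X},E}^{p}(\tilde{W})\bigr)\;\Longrightarrow\;H^{a+b}\bigl(\tilde{X},\,\K_{\tilde{X},E}^{p}(\tilde{W})\bigr),
\end{equation*}
which is valid since the sheaf $\K_{\tilde{X},E}^{p}(\tilde{W})$ sits naturally as a subsheaf of $\mathcal{O}_{\tilde{X}}$-modules in the exact sequence \eqref{tilde-W-sheaf-exact-seq}, so the spectral sequence converges in the usual way.

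Next I would feed in the two vanishing/isomorphism inputs provided by Lemma \ref{hi-dir-0}: first, the higher direct images vanish, $R^{b}\pi_{\ast}\K_{\tilde{X},E}^{p}(\tilde{W})=0$ for all $b\geq 1$, which kills every row above the bottom one of the $E_{2}$-page; second, the zeroth direct image is canonically identified via the pullback, $\pi^{\ast}:\K_{X,Z}^{p}(W)\xrightarrow{\simeq}\pi_{\ast}\K_{\tilde{X},E}^{p}(\tilde{W})$. Consequently, the only nonzero column/row is $b=0$, so the spectral sequence degenerates at $E_{2}$ and the edge homomorphism yields a canonical isomorphism
\begin{equation*}
H^{q}\bigl(\tilde{X},\,\K_{\tilde{X},E}^{p}(\tilde{W})\bigr)\;\cong\;H^{q}\bigl(X,\,\pi_{\ast}\K_{\tilde{X},E}^{p}(\tilde{W})\bigr)\;\cong\;H^{q}\bigl(X,\,\K_{X,Z}^{p}(W)\bigr).
\end{equation*}

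Finally, I would check that the composite isomorphism is induced by the pullback $\pi^{\ast}$ in the sense stated in \eqref{crd-iso}: this is built into the construction, because the edge map of the Leray spectral sequence agrees with the map $H^{q}(X,\pi_{\ast}\mathcal{F})\to H^{q}(\tilde{X},\mathcal{F})$ coming from adjunction, and under the identification $\pi^{\ast}:\K_{X,Z}^{p}(W)\xrightarrow{\simeq}\pi_{\ast}\K_{\tilde{X},E}^{p}(\tilde{W})$ this adjunction map is precisely the pullback on \v{C}ech/derived functor cohomology.

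There is essentially no obstacle here beyond bookkeeping: the whole content of the lemma has already been packaged into Lemma \ref{hi-dir-0}, and the argument is the standard Leray degeneration. The only point requiring mild care is the identification of the edge map with $\pi^{\ast}$, which I would justify by either working with injective resolutions $\K_{\tilde{X},E}^{p}(\tilde{W})\to\mathcal{I}^{\bullet}$ on $\tilde{X}$ and noting that $\pi_{\ast}\mathcal{I}^{\bullet}$ computes $H^{q}(X,\pi_{\ast}\K_{\tilde{X},E}^{p}(\tilde{W}))$ by the vanishing of $R^{b}\pi_{\ast}$, or equivalently by invoking the functoriality of the Leray spectral sequence with respect to the adjunction $(\pi^{-1},\pi_{\ast})$.
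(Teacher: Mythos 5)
Your proposal is correct and is essentially the paper's own argument: the paper proves the Leray degeneration by hand, taking a flabby resolution $\K_{\tilde{X},E}^{p}(\tilde{W})\to\mathcal{G}^{p,\bullet}$, using the vanishing of $R^{b}\pi_{\ast}$ from Lemma \ref{hi-dir-0} to conclude that $\pi_{\ast}\mathcal{G}^{p,\bullet}$ is an exact (hence flabby) resolution of $\pi_{\ast}\K_{\tilde{X},E}^{p}(\tilde{W})$, and then identifying the latter with $\K_{X,Z}^{p}(W)$ via $\pi^{\ast}$. This is exactly the resolution-based justification you give in your final paragraph, so the two proofs coincide in substance.
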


\begin{proof}
Pick $\K_{\tilde{X},E}^{p}(\tilde{W})\rightarrow \mathcal{G}^{p, \bullet}$ a flabby resolution.
By the definition of sheaf cohomology, we have
$$
H^{l}(\tilde{X}, \K_{\tilde{X},E}^{p}(\tilde{W}))
:=
H^{l}(\Gamma(\tilde{X}, \mathcal{G}^{p, \bullet})).
$$
As defined in \cite[(13.4) Definition of Chapter IV]{Dem12}, the higher direct images are
$$
R^{q}\pi_{\ast}\K_{\tilde{X},E}^{p}(\tilde{W})
:=
\mathscr{H}^{q}(\pi_{\ast}\mathcal{G}^{p, \bullet}).
$$
According to Lemma \ref{hi-dir-0}, $R^{q}\pi_{\ast}\K_{\tilde{X},E}^{p}(\tilde{W})=0$ for $q\neq 0$, hence $\mathscr{H}^{q}(\pi_{\ast}\mathcal{G}^{p, \bullet})=0$ for $q\neq 0$, which means that the sheaf complex $\pi_{\ast}\mathcal{G}^{p, \bullet}$ is exact.
It concludes that $\pi_{\ast}\mathcal{G}^{p, \bullet}$ is a flabby resolution of $\pi_{\ast}\K_{\tilde{X},E}^{p}(\tilde{W})$ since the direct image of a flabby sheaf is flabby.
Therefore, we derive an isomorphism of abelian groups
\begin{equation}\label{more-iso}
H^{l}(X, \pi_{\ast}\K_{\tilde{X},E}^{p}(\tilde{W}))
=
H^{l}(\Gamma(X, \pi_{\ast}\mathcal{G}^{p, \bullet}))
=
H^{l}(\Gamma(\tilde{X}, \mathcal{G}^{p, \bullet}))
=
H^{l}(\tilde{X}, \K_{\tilde{X},E}^{p}(\tilde{W})).
\end{equation}
Moreover, since $\pi^{\ast}: \K_{X,Z}^{p}(W)\rightarrow \pi_{\ast}\K_{\tilde{X},E}^{p}(\tilde{W})$ is an isomorphism as in Lemma \ref{hi-dir-0}, it induces an isomorphism
$$
\pi^{\ast}: H^{q}(X, \K_{X,Z}^{p}(W)) \stackrel{\simeq}\longrightarrow H^{q}(X, \pi_{\ast}\K_{\tilde{X},E}^{p}(\tilde{W})).
$$
Combining it with \eqref{more-iso} concludes the proof.
\end{proof}

\section{Proof of Theorem \ref{thm1}}\label{proof}
\subsection{Construction of the morphism $\phi$}
\label{def-phi}
Let $W^{\ast}$ be the dual bundle to the holomorphic vector bundle $W$ over $X$.
Then the pullback $\pi^{\ast}W^{\ast}$ is a holomorphic vector bundle on $\tilde{X}$.
Set $\mathscr{D}^{p,q}(X,W)$ as the space of $W$-valued currents of type $(p,q)$ on $X$,
which is defined to be the dual of the topological vector space $A^{n-q,n-q}(X,W^{\ast})$ equipped with its natural topology.
Moreover, the operator $\bar{\partial}$ induces a differential $\bar{\partial}^{\ast}$ on
$\mathscr{D}^{p,\bullet}(X,W)$.
The associated $q$-th cohomology of the dual complex
$\{\mathscr{D}^{p,\bullet}(X,W),\bar{\partial}^{\ast}\}$
is denoted by $H^{p,q}_{\mathscr{D}}(X,W)$.
From definition, there is a natural inclusion
$
\varrho:A^{p,\bullet}(X,W)
\hookrightarrow
\mathscr{D}^{p,\bullet}(X,W)
$
which induces an isomorphism
\begin{equation*}
\varrho_{\ast}:H^{p,q}(X,W)
\stackrel{\simeq}\longrightarrow
H^{p,q}_{\mathscr{D}}(X,W)
\end{equation*}
as shown in the proof of \cite[Theorem 3.3]{Wel74}.
Similarly, for the $\pi^{\ast}W$-valued currents on $\tilde{X}$ we have
\begin{equation*}
\tilde{\varrho}_{\ast}:H^{p,q}(\tilde{X},\pi^{*}W)
\stackrel{\simeq}\longrightarrow
H^{p,q}_{\mathscr{D}}(\tilde{X},\pi^{*}W).
\end{equation*}
The pushforward of the currents defines a morphism
$$
\pi_{\flat}:
H^{p,q}_{\mathscr{D}}(\tilde{X},\pi^{\ast}W)
\rightarrow
H^{p,q}_{\mathscr{D}}(X,W)
$$
and gives the following diagram
\begin{equation}\label{form-curr}
\xymatrix{
  H^{p,q}(\tilde{X},\pi^{\ast}W)  \ar[r]^{\tilde{\varrho}_{\ast}}_{\cong}
  & H^{p,q}_{\mathscr{D}}(\tilde{X},\pi^{\ast}W) \ar[d]^{\pi_{\flat}} \\
  H^{p,q}(X,W)\ar[u]_{\pi^{\ast}}
  \ar[r]^{\varrho_{\ast}}_{\cong}
  & H^{p,q}_{\mathscr{D}}(X,W).   }
\end{equation}

As the degree of the blow-up morphism $\pi$ is $1$,
we get
$
\varrho_{\ast}=
\pi_{\flat}\circ\tilde{\varrho}_{\ast}
\circ\pi^{\ast}
$
(cf. \cite[Lemma 2.2]{Wel74}).
For any $0\leq p,q\leq n$, the morphism
$$
\pi^{\ast}:H^{p,q}(X,W)
\rightarrow
H^{p,q}(\tilde{X},\pi^{\ast}W)
$$
is an injection (cf. \cite[Theorem 3.1 (c)]{Wel74}).
From \eqref{form-curr}, we can define a natural morphism
$$
\pi_{\ast}=
\varrho^{-1}_{\ast}\circ\pi_{\flat}
\circ\tilde{\varrho}_{\ast}:
H^{p,q}(\tilde{X},\pi^{\ast}W)\rightarrow H^{p,q}(X,W).
$$
Then \eqref{form-curr} implies that
$$
\pi_{\ast}:
H^{p,q}(\tilde{X},\pi^{\ast}W)\rightarrow H^{p,q}(X,W)
$$
is surjective.

In view of Lemma \ref{dolb-projective-formula}, we get
$$
H^{p,q}(E,\jmath^{\ast}\tilde{W})=
\bigoplus^{r-1}_{i=0}\bm{h}^{i}\wedge
\varpi^{\ast}H^{p-i,q-i}(Z,\imath^{\ast}W),
$$
where
$
\bm{h}=c_{1}(\mathcal{O}_{E}(1))\in H^{1,1}_{\bar{\partial}}(E)
$
and
$\varpi^{\ast}$ is the pullback of the projection
$\varpi:E\rightarrow Z$.
It means that each class
$[\tilde{\alpha}]_{(p,q)}\in H^{p,q}(E,\jmath^{\ast}\tilde{W})$
admits a unique expression
$$
[\tilde{\alpha}]_{(p,q)}=
\sum^{r-1}_{i=0}\bm{h}^{i}\wedge
\varpi^{\ast}[\alpha]_{(p-i,q-i)},
$$
where
$[\alpha]_{(p-i,q-i)}\in H^{p-i,q-i}(Z,\imath^{\ast}W)$.
Define the linear map
\begin{eqnarray*}
  \Pi_{i}:H^{p,q}(E,\jmath^{\ast}\tilde{W})
  &\rightarrow& H^{p-i,q-i}(Z,\imath^{\ast}W) \\
  {[\tilde{\alpha}]}_{(p,q)}&\mapsto& {[\alpha]}_{(p-i,q-i)}.
\end{eqnarray*}
Then we can define the desired morphism $\phi$ by setting
\begin{equation}\label{phi}
\phi=\pi_{\ast}+\sum_{i=1}^{r-1}\Pi_{i}\circ\jmath^{\ast}
\end{equation}
which maps
$\mathbb{V}^{p,q}:=H^{p,q}(\tilde{X},\pi^{\ast}W)$
to the space
$$
\mathbb{W}^{p,q}:=
H^{p,q}(X,W)\oplus
 \biggl(\bigoplus^{r-1}_{i=1}
 H^{p-i,q-i}(Z,\imath^{\ast}W)\biggr).
$$
\subsection{$\phi$ is isomorphic}
We divide the proof in two steps.
First, we prove that
$\mathbb{V}^{p,q}$ is isomorphic to $\mathbb{W}^{p,q}$ as complex vector spaces, i.e.,
they have the same complex dimensions.
Then we show that the linear morphism
$\phi:\mathbb{V}^{p,q}
\rightarrow
\mathbb{W}^{p,q}$
is injective.

In the proof we need the following basic result from homological algebra.
\begin{prop}\label{coker=coker}
Consider an exact ladder of complex vector spaces
\begin{equation*}
\xymatrix@C=0.5cm{
  \cdots \ar[r]^{} & A_1 \ar[d]_{i_1} \ar[r]^{f_1}& A_2 \ar[d]_{i_2} \ar[r]^{f_2}& A_3 \ar[d]_{i_3} \ar[r]^{f_3}& A_4 \ar[d]_{i_4} \ar[r]^{}& \cdots \\
\cdots \ar[r]^{} & B_1 \ar[r]^{g_1}& B_2 \ar[r]^{g_2}& B_3 \ar[r]^{g_3}& B_4 \ar[r]^{}&  \cdots.}
\end{equation*}
If the vertical maps $i_1,\,i_4$ are isomorphic, and $i_2,\,i_3$ are injective,
then $g_{2}$ induces an isomorphism
$$
B_2/i_{2}(A_2)\cong B_3/ i_3(A_3).
$$
\end{prop}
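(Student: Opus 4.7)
The plan is a standard four-lemma-style diagram chase, split into three verifications: well-definedness, injectivity, and surjectivity of the induced map $\bar{g}_2:B_2/i_2(A_2)\to B_3/i_3(A_3)$.

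First, well-definedness is immediate from commutativity of the ladder: $g_2(i_2(A_2))=i_3(f_2(A_2))\subseteq i_3(A_3)$, so $g_2$ descends to $\bar{g}_2$.

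For injectivity, I would start with $b_2\in B_2$ satisfying $g_2(b_2)=i_3(a_3)$ for some $a_3\in A_3$, and argue that $[b_2]=0$ in $B_2/i_2(A_2)$. The chase proceeds as follows: applying $g_3$ gives $0=g_3g_2(b_2)=g_3i_3(a_3)=i_4f_3(a_3)$, so injectivity of $i_4$ forces $f_3(a_3)=0$; then exactness at $A_3$ produces $a_2\in A_2$ with $f_2(a_2)=a_3$, hence $g_2(b_2)=i_3f_2(a_2)=g_2i_2(a_2)$. Thus $b_2-i_2(a_2)\in\ker g_2=\mathrm{Im}\,g_1$, say $b_2-i_2(a_2)=g_1(b_1)$; finally the surjectivity of $i_1$ yields $a_1\in A_1$ with $i_1(a_1)=b_1$, and commutativity gives $g_1(b_1)=i_2(f_1(a_1))$, so $b_2=i_2(a_2+f_1(a_1))\in i_2(A_2)$.

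For surjectivity, given $b_3\in B_3$ I would look at $g_3(b_3)\in B_4$ and use surjectivity of $i_4$ to write $g_3(b_3)=i_4(a_4)$ for some $a_4\in A_4$. The goal becomes showing $a_4\in\mathrm{Im}\,f_3$, so that $a_4=f_3(a_3)$ and then $g_3(b_3)=i_4 f_3(a_3)=g_3 i_3(a_3)$ forces $b_3-i_3(a_3)\in\ker g_3=\mathrm{Im}\,g_2$, yielding $b_3-i_3(a_3)=g_2(b_2)$ and hence $[b_3]=\bar{g}_2([b_2])$. The containment $a_4\in\mathrm{Im}\,f_3=\ker f_4$ is established by pushing one step further: commutativity gives $i_5(f_4(a_4))=g_4 i_4(a_4)=g_4 g_3(b_3)=0$ by exactness of the lower row at $B_4$, which combined with the injectivity of the next vertical map (provided by the continuation of the exact ladder in the intended applications, where all vertical maps are injections of cohomology groups) yields $f_4(a_4)=0$, so exactness of the upper row at $A_4$ supplies the desired $a_3$.

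The main obstacle is purely bookkeeping: keeping the chase straight across both rows and being careful about exactly which of the hypotheses (isomorphism vs. injectivity) is invoked at which step. The injectivity half uses surjectivity of $i_1$ and injectivity of $i_4$; the surjectivity half uses surjectivity of $i_4$ together with the tacit injectivity of the next vertical arrow, which is automatic in the setting of this paper because every vertical map in the long exact ladders that arise is a pullback of Dolbeault classes and hence injective by Wells' theorem.
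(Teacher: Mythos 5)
Your chase is correct and is the standard four-lemma-style argument; note that the paper states Proposition \ref{coker=coker} as a ``basic result from homological algebra'' and gives no proof at all, so there is no in-paper argument to compare against. The substantive point is the caveat you raise in the surjectivity half, and you are right to raise it: the printed hypotheses ($i_1,i_4$ isomorphisms, $i_2,i_3$ injective) do \emph{not} suffice. To conclude $a_4\in\ker f_4=\mathrm{Im}\,f_3$ one genuinely needs the next vertical map $i_5$ to be injective (at least on $\mathrm{Im}\,f_4$). Without it the statement fails: take the top row $\cdots\to 0\to 0\to 0\to k\xrightarrow{\ \mathrm{id}\ }k\to 0\to\cdots$ (nonzero in positions $4$ and $5$), the bottom row $\cdots\to 0\to 0\to k\xrightarrow{\ \mathrm{id}\ }k\to 0\to\cdots$ (nonzero in positions $3$ and $4$), and $i_4=\mathrm{id}$; all stated hypotheses hold, yet $B_2/i_2(A_2)=0$ while $B_3/i_3(A_3)=k$. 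Your patch --- observing that in the ladder \eqref{comm-cohom-long-seq-1} where the proposition is actually invoked, the vertical arrow following $i_4$ is again a pullback $\pi^{\ast}$ on $H^{q+1}(X,\Omega^{p}_{X}(W))$ and hence injective by Wells' theorem --- is exactly the right fix, so the proposition is true in every instance the paper uses. The remaining bookkeeping in your write-up (well-definedness from commutativity of the middle square; injectivity of the induced map from injectivity of $i_4$ together with surjectivity of $i_1$ and exactness at $A_3$ and $B_2$) is accurate as it stands.
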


Tensoring \eqref{rel-Dol-sequ-smooth} with $W$ (viewed as a complex vector bundle),
we get a short exact sequence
\begin{equation*}
\xymatrix@C=0.5cm{
0 \ar[r]^{} & \K_{X,Z}^{p, q}(W) \ar[r]^{} & \A_{X}^{p, q}(W) \ar[r]^{\imath^{\ast}} & \imath_{\ast}(\A_{Z}^{p, q}(\imath^{\ast}W))\ar[r]^{} & 0}
\end{equation*}
for sheaves of $\mathcal{C}_{X}^{\infty}$-modules.
Note that $\K_{X,Z}^{p, q}(W)$ is a fine sheaf and hence $\Gamma$-acyclic (cf. \cite[Proposition 4.36]{Voi02}).
Moreover, since $\Gamma(X,-)$ is a left exact functor,
taking the global sections to the exact sequence above we obtain a short exact sequence of vector spaces
\begin{equation*}
\xymatrix@C=0.5cm{
0 \ar[r]^{} &  \Gamma(X,\K_{X,Z}^{p,q}(W))  \ar[r]^{} &\Gamma(X,\A_{X}^{p,q}(W)) \ar[r]^{\imath^{\ast}} & \Gamma(Z,\A_{Z}^{p,q}(\imath^{\ast}W))\ar[r]^{} & 0}.
\end{equation*}
Since $\imath^{\ast}$ commutes with the Dolbeault differential $\bar{\partial}$,
there exists a commutative diagram of short exact sequences
\begin{equation*}
\xymatrix@C=0.5cm{
 0 \ar[r]^{} & \Gamma(X,\K_{X,Z}^{p,q}(W)) \ar[d]_{\bar{\partial}} \ar[r]^{} & \Gamma(X,\A_{X}^{p,q}(W)) \ar[d]_{\bar{\partial}} \ar[r]^{\imath^{\ast}} &  \Gamma(Z,\A_{Z}^{p,q}(\imath^{\ast}W))\ar[d]_{\bar{\partial}} \ar[r]^{} & 0 \\
0 \ar[r] &  \Gamma(X,\K_{X,Z}^{p,q+1}(W)) \ar[r]^{} & \Gamma(X,\A_{X}^{p,q+1}(W)) \ar[r]^{\imath^{\ast}} & \Gamma(Z,\A_{Z}^{p,q+1}(\imath^{\ast}W)) \ar[r] &0. }
\end{equation*}
Therefore, there is a short exact sequence for complexes of vector spaces
\begin{equation*}\label{}
\xymatrix@C=0.5cm{
0 \ar[r]^{} &  \Gamma(X,\K_{X,Z}^{p,\bullet}(W))  \ar[r]^{} &\Gamma(X,\A_{X}^{p,\bullet}(W)) \ar[r]^{\imath^{\ast}} & \Gamma(Z,\A_{Z}^{p,\bullet}(\imath^{\ast}W))\ar[r]^{} & 0.}
\end{equation*}
Likewise, for the triple $(\tilde{X}, E, \tilde{W})$,
there exist a short exact sequence of $\mathcal{C}_{\tilde{X}}^{\infty}$-modules
\begin{equation*}\label{}
\xymatrix@C=0.5cm{
0 \ar[r]^{} & \K_{\tilde{X},E}^{p, q}(\tilde{W}) \ar[r]^{} & \A_{\tilde{X}}^{p, q}(\tilde{W}) \ar[r]^{\jmath^{\ast}} & \jmath_{\ast}(\A_{E}^{p, q}(\jmath^{\ast}\tilde{W}))\ar[r]^{} & 0}
\end{equation*}
and a short exact sequence for complexes of vector spaces
\begin{equation*}\label{}
\xymatrix@C=0.5cm{
0 \ar[r]^{} &  \Gamma(X,\K_{\tilde{X},E}^{p,\bullet}(\tilde{W}))  \ar[r]^{} &\Gamma(X,\A_{\tilde{X}}^{p,\bullet}(\tilde{W})) \ar[r]^{\jmath^{\ast}} & \Gamma(E,\A_{E}^{p,\bullet}(\jmath^{\ast}\tilde{W}))\ar[r]^{} & 0.}
\end{equation*}

By the blow-up diagram \eqref{blowup-diag3}, we have
$\jmath^{\ast}\circ\pi^{\ast}=\varpi^{\ast}\circ \imath^{\ast}$
and hence there is a commutative diagram for short exact sequences of vector spaces
\begin{equation*}
\xymatrix@C=0.5cm{
 0 \ar[r]^{} & \Gamma(X,\K_{X,Z}^{p,q}(W)) \ar[d]_{\pi^{\ast}} \ar[r]^{} & \Gamma(X,\A_{X}^{p,q}(W)) \ar[d]_{\pi^{\ast}} \ar[r]^{\imath^{\ast}} &  \Gamma(Z,\A_{Z}^{p,q}(\imath^{\ast}W))\ar[d]_{\varpi^{\ast}} \ar[r]^{} & 0 \\
0 \ar[r] &  \Gamma(\tilde{X},\K_{\tilde{X},E}^{p,q}(\tilde{W})) \ar[r]^{} &
   \Gamma(\tilde{X},\A_{\tilde{X}}^{p,q}(\tilde{W})) \ar[r]^{\jmath^{\ast}} &
   \Gamma(E,\A_{E}^{p,q}(\jmath^{\ast}\tilde{W})) \ar[r] &0. }
\end{equation*}
As the operator $\bar{\partial}$ commutes with the pullback maps,
there exists a commutative diagram
\begin{equation}\label{commut-complexes}
\xymatrix@C=0.5cm{
 0 \ar[r]^{} & \Gamma(X,\K_{X,Z}^{p,\bullet}(W)) \ar[d]_{\pi^{\ast}} \ar[r]^{} & \Gamma(X,\A_{X}^{p,\bullet}(W)) \ar[d]_{\pi^{\ast}} \ar[r]^{\imath^{\ast}} &  \Gamma(Z,\A_{Z}^{p,\bullet}(\imath^{\ast}W))\ar[d]_{\varpi^{\ast}} \ar[r]^{} & 0 \\
0 \ar[r] &  \Gamma(\tilde{X},\K_{\tilde{X},E}^{p,\bullet}(\tilde{W})) \ar[r]^{} &
   \Gamma(\tilde{X},\A_{\tilde{X}}^{p,\bullet}(\tilde{W})) \ar[r]^{\jmath^{\ast}} &
   \Gamma(E,\A_{E}^{p,\bullet}(\jmath^{\ast}\tilde{W})) \ar[r] &0}
\end{equation}
 of short exact sequences for complexes of vector spaces.

By the definition of hypercohomologies,
the standard diagram chasing of the commutative diagram \eqref{commut-complexes} gives rise to a ladder of hypercohomologies
\begin{equation}\label{long-exact-cohom1}
\small{
\xymatrix@C=0.3cm{
   \cdots \ar[r]^{} & \mathbb{H}^{q}(X,\K_{X,Z}^{p,\bullet}(W)) \ar[d]_{\pi^{\ast}} \ar[r]^{} &\mathbb{H}^{q}(X,\A_{X}^{p,\bullet}(W)) \ar[d]_{\pi^{\ast}} \ar[r]^{} & \mathbb{H}^{q}(Z, \A_{Z}^{p,\bullet}(\imath^{\ast}W))\ar[d]_{\varpi^{\ast}} \ar[r]^{} & \mathbb{H}^{q+1}(X,\K_{X,Z}^{p,\bullet}(W))\ar[d]_{\pi^{\ast}} \ar[r]^{} & \cdots \\
   \cdots \ar[r] & \mathbb{H}^{q}(\tilde{X},\K_{\tilde{X},E}^{p,\bullet}(\tilde{W})) \ar[r]^{} &
  \mathbb{H}^{q}(\tilde{X},\A_{\tilde{X}}^{p,\bullet}(\tilde{W})) \ar[r]^{} &
   \mathbb{H}^{q}(E,\A_{E}^{p,\bullet}(\jmath^{\ast}\tilde{W}))\ar[r]^{} &
   \mathbb{H}^{q+1}(\tilde{X},\K_{\tilde{X},E}^{p,\bullet}(\tilde{W})) \ar[r] & \cdots.}
   }
\end{equation}

By Proposition \ref{redol} (i),
$\K_{X,Z}^{p,\bullet}$ is  a fine resolution of $\K_{X,Z}^{p}$ and thus $\K_{X,Z}^{p,\bullet}(W)$ is a fine resolution of $\K_{X,Z}^{p}(W)$.
It follows
$\mathbb{H}^{q}(X,\K_{X,Z}^{p,\bullet}(W))\cong H^{q}(X,\K_{X,Z}^{p}(W))$.
Similarly, we have $$\mathbb{H}^{q}(\tilde{X},\K_{\tilde{X},E}^{p,\bullet}(\tilde{W}))\cong H^{q}(\tilde{X},\K_{\tilde{X},E}^{p}(\tilde{W})).
$$
Moreover, by the bundle-valued Dolbeault theorem to $(X, W)$, $(Z, \imath^{\ast}W)$, $(\tilde{X}, \tilde{W})$ and $(E, \jmath^{\ast}\tilde{W})$,
the ladder \eqref{long-exact-cohom1} turns into
\begin{equation}\label{comm-cohom-long-seq}
\small{
\xymatrix@C=0.3cm{
  \cdots \ar[r]^{}
  & H^{q}(X,\mathscr{K}^{p}_{X,Z}(W))\ar[d]_{\pi^{\ast}} \ar[r]^{}
  & H^{q}(X,\Omega^{p}_{X}(W))\ar[d]_{\pi^{\ast}} \ar[r]^{}
  & H^{q}(Z,\Omega^{p}_{Z}(\imath^{\ast}W)) \ar[d]_{\pi^{\ast}} \ar[r]^{}
  & H^{q+1}(X,\mathscr{K}^{p}_{X,Z}(W))\ar[d]_{\pi^{\ast}} \ar[r]^{} & \cdots \\
   \cdots \ar[r]
  & H^{q}(\tilde{X}, \mathscr{K}^{p}_{\tilde{X},E}(\tilde{W}))\ar[r]^{}
  & H^{q}(\tilde{X},\Omega^{p}_{\tilde{X}}(\tilde{W})) \ar[r]^{}
  & H^{q}(E, \Omega^{p}_{\tilde{X}}(\jmath^{\ast}\tilde{W})) \ar[r]
  & H^{q+1}(\tilde{X}, \mathscr{K}^{p}_{\tilde{X},E}(\tilde{W}))\ar[r]&\cdots.}
}
\end{equation}
Combine \eqref{comm-cohom-long-seq} with Lemma \ref{kersheaf-iso} to get the equivalent ladder
\begin{equation}\label{comm-cohom-long-seq-1}
\small{
\xymatrix@C=0.3cm{
  \cdots \ar[r]^{}
  & H^{q}(X,\mathscr{K}^{p}_{X,Z}(W))\ar[d]_{\cong} \ar[r]^{}
  & H^{q}(X,\Omega^{p}_{X}(W))\ar[d]_{\pi^{\ast}} \ar[r]^{}
  & H^{q}(Z,\Omega^{p}_{Z}(\imath^{\ast}W)) \ar[d]_{\varpi^{\ast}} \ar[r]^{}
  & H^{q+1}(X,\mathscr{K}^{p}_{X,Z}(W))
  \ar[d]_{\cong} \ar[r]^{} & \cdots \\
   \cdots \ar[r]
  & H^{q}(\tilde{X}, \mathscr{K}^{p}_{\tilde{X},E}(\tilde{W}))\ar[r]^{}
  & H^{q}(\tilde{X},\Omega^{p}_{\tilde{X}}(\tilde{W})) \ar[r]^{}
  & H^{q}(E,\Omega^{p}_{E}(\jmath^{\ast}\tilde{W})) \ar[r]
  & H^{q+1}(\tilde{X}, \mathscr{K}^{p}_{\tilde{X},E}(\tilde{W}))\ar[r]&\cdots}
}
\end{equation}
to \eqref{comm-cohom-long-seq}.
Since $\pi: \tilde{X}\rightarrow X$ is a proper surjective holomorphic map,
by \cite[Theorem 3.1]{Wel74}, the pullback morphism
$$
\pi^{\ast}: H^{q}(X,\Omega^{p}_{X}(W)) \rightarrow H^{q}(\tilde{X},\Omega^{p}_{\tilde{X}}(\tilde{W}))
$$
is injective.
In particular, the morphism $\varpi^{\ast}$ in \eqref{comm-cohom-long-seq-1} is injective since the Weak Five Lemma (cf. \cite[Lemma 3.3 (i) of Chapter I]{Mac63}).
Due to Proposition \ref{coker=coker}, the commutative diagram \eqref{comm-cohom-long-seq-1} gives rise to the following isomorphism of complex vector spaces
\begin{equation}\label{coker-coker}
\mathrm{coker}\,\bigl(H^{q}(X,\Omega^{p}_{X}(W))
\stackrel{\pi^{\ast}}\rightarrow
H^{q}(\tilde{X},\Omega^{p}_{\tilde{X}}(\tilde{W}))\bigr)
\cong
\mathrm{coker}\,\bigl(
H^{q}(Z,\Omega^{p}_{Z}(\imath^{\ast}W))
\stackrel{\varpi^{\ast}}\rightarrow
H^{q}(E,\Omega^{p}_{E}(\jmath^{\ast}\tilde{W}))\bigr).
\end{equation}

From definition, the exceptional divisor $E$ is biholomorphic to the projectivization of the normal bundle $\mathcal{N}_{Z/X}$.
Owing to the Dolbeault theorem, we get that the morphism
$$
\varpi^{\ast}:
H^{q}(Z,\Omega^{p}_{Z}(\imath^{\ast}W))
\rightarrow
H^{q}(E,\Omega^{p}_{E}(\jmath^{\ast}\tilde{W}))
$$
is equal to the morphism
$$
\varpi^{\ast}:
H^{p,q}(Z,\imath^{\ast}W)
\rightarrow
H^{p,q}(E,\jmath^{\ast}\tilde{W})
$$
in Lemma \ref{dolb-projective-formula}.
Thanks to Lemma \ref{dolb-projective-formula} we get

\begin{eqnarray}\label{coker-pi-E}
  \frac{H^{p,q}(E,\jmath^{\ast}\tilde{W})}
{\varpi^{\ast}H^{p,q}(Z,\imath^{\ast}W)}
&=&
\bigoplus_{i=1}^{r-1}\tilde{\bm{t}}^{i}
\wedge\varpi^{\ast}H^{p-i,q-i}(Z,\imath^{\ast}W)
\nonumber \\
&\cong&
\bigoplus_{i=1}^{r-1}H^{p-i,q-i}(Z,\imath^{\ast}W).
\end{eqnarray}
By \eqref{coker-coker} and \eqref{coker-pi-E} one has the following isomorphism of complex vector spaces
\begin{eqnarray}\label{thm1-iso-0}
H^{p,q}(\tilde{X},\tilde{W})
&\cong&
H^{q}(\tilde{X},\Omega^{p}_{\tilde{X}}(\tilde{W}))
\nonumber\\
&\cong&
H^{q}(X,\Omega^{p}_{X}(W))
\oplus
\mathrm{coker}\,\varpi^{\ast} \nonumber\\
&\cong &
H^{p,q}(X,W)
\oplus \Big(\bigoplus_{i=1}^{r-1}
H^{p-i,q-i}(Z,\imath^{\ast}W)\Big).
\end{eqnarray}

Next we verify the injectivity of $\phi$.
From \eqref{coker-coker}, we obtain a commutative diagram of short exact sequences as follows
\begin{equation}\label{comm-coker}
\xymatrix@C=0.5cm{
0 \ar[r]^{} & H^{p,q}(X,W)
\ar[d]_{\imath^{\ast}}
\ar[r]^{\pi^{\ast}} &
H^{p,q}(\tilde{X},\pi^{\ast}W)
\ar[d]_{\jmath^{\ast}}
\ar[r]^{} &
\mathrm{coker}\,(\pi^{\ast}) \ar[d]_{\bar{\jmath}^{\ast}}^{\cong} \ar[r]^{} & 0 \\
0 \ar[r] &
H^{p,q}(Z,\imath^{\ast}W)
\ar[r]^{\varpi^{\ast}} &
H^{p,q}(E,\jmath^{\ast}\tilde{W})
\ar[r]^{} &
\mathrm{coker}\,(\varpi^{\ast})\ar[r]^{} & 0.}
\end{equation}
Here $\bar{\jmath}^{\ast}$ is the induced isomorphism of the quotient spaces by $\jmath^{\ast}$.
Combining \eqref{form-curr} with \eqref{comm-coker} we get the following diagram
\begin{equation}\label{curr-coker}
\xymatrix@C=0.5cm{
& H^{p,q}_{\mathscr{D}}(X,W)
 &
H^{p,q}_{\mathscr{D}}(\tilde{X},\pi^{\ast}W)
\ar[l]_{\pi_{\flat}}
&
& \\
0 \ar[r]^{} & H^{p,q}(X,W)
\ar[u]^{\varrho_{\ast}}_{\cong}
\ar[d]_{\imath^{\ast}}
\ar[r]^{\pi^{\ast}} &
H^{p,q}(\tilde{X},\pi^{\ast}W)
\ar[d]_{\jmath^{\ast}}
\ar[u]^{\tilde{\varrho}_{\ast}}_{\cong}
\ar[r]^{} &
\mathrm{coker}\,(\pi^{\ast}) \ar[d]_{\bar{\jmath}^{\ast}}^{\cong} \ar[r]^{} & 0 \\
0 \ar[r] &
H^{p,q}(Z,\imath^{\ast}W)
\ar[r]^{\varpi^{\ast}} &
H^{p,q}(E,\jmath^{\ast}\tilde{W})
\ar[r]^{} &
\mathrm{coker}\,(\varpi^{\ast})\ar[r]^{} & 0.}
\end{equation}
Recall that
$
\pi_{\ast}=
\varrho^{-1}_{\ast}\circ\pi_{\flat}
\circ\tilde{\varrho}_{\ast}
$
and
$
\phi=\pi_{\ast}+\sum^{r-1}_{i=1}\Pi_{i}\circ\jmath^{\ast}
$
maps $H^{p,q}(\tilde{X},\pi^{\ast}W)$ to
$$
H^{p,q}(X,W)\oplus
\biggl(\bigoplus^{r-1}_{i=1}
H^{p-i,q-i}(Z,\imath^{\ast}W)\biggr).
$$

Notice that $\pi_{\ast}$ is surjective;
moreover, a direct check shows that
the kernel of $\pi_{\ast}$ is equal to the co-kernel of $\pi^{\ast}$, i.e.,
$$
\ker\,(\pi_{\ast})=\mathrm{coker}\,(\pi^{\ast})
\stackrel{\bar{\jmath}^{\ast}}\rightarrow
\biggl(\bigoplus^{r-1}_{i=1}
\bm{h}^{i}\wedge\varpi^{\ast}
H^{p-i,q-i}(Z,\imath^{\ast}W)\biggr).
$$
It follows that the restriction of $\jmath^{\ast}$ on
$\ker\,(\pi_{\ast})$ is injective.
Given an element
$\tilde{\alpha}\in H^{p,q}(\tilde{X},\pi^{\ast}W)$, suppose that $\phi(\tilde{\alpha})=0$.
Then we get
$\tilde{\alpha}\in\ker\,(\pi_{\ast})$,
$\jmath^{\ast}(\tilde{\alpha})=0$ and hence $\tilde{\alpha}=0$.
This implies that $\phi$ is injective
and the proof of the theorem is now complete.

\section{Applications of Theorem \ref{thm1}}\label{sec-6}

We start this section with several basic notions in bimeromorphic geometry, whose nice reference is \cite[$\S$ 2]{Uen75}.
The first one is the proper modification.
\begin{defn}
A morphism $\pi: \tilde{X}\rightarrow X$ of two equidimensional complex spaces is called a \emph{proper modification}, if it satisfies:
\begin{enumerate}
  \item [(i)] $\pi$ is proper and surjective;
  \item [(ii)] there exist nowhere dense analytic subsets $\tilde E\subset \tilde{X}$ and $E \subset X$ such that
                  $$
                  \pi:\tilde{X}-\tilde E\rightarrow X- E
                  $$
                  is a biholomorphism, where $\tilde E:=\pi^{-1}(E)$ is called the \emph{exceptional space of the modification}.
\end{enumerate}
If $\tilde X$ and $X$ are compact, a proper modification $\pi: \tilde{X}\rightarrow X$ is often called simply a \emph{modification}.
\end{defn}

More generally, we have the following important notation in complex geometry.
\begin{defn}\label{bimero}
Let $X$ and $Y$ be two complex spaces.
A map $\varphi$ of $X$ into the power set of $Y$ is a \emph{meromorphic map} of $X$ into $Y$,
denoted by $\varphi: X\dashrightarrow Y$, if $X$ satisfies the following conditions:
\begin{enumerate}
  \item [(i)] The graph $G_{\varphi}=\{(x,y)\in X\times Y\ |\ y\in \varphi(x)\}$ of $\varphi$ is an irreducible analytic subset in $X\times Y$;
  \item [(ii)] The projection map $P_X:G_{\varphi}\rightarrow X$ is a proper modification.
\end{enumerate}

A meromorphic map $\varphi: X\dashrightarrow Y$ of complex varieties is called a \emph{bimeromorphic map} if $P_Y:G_{\varphi}\rightarrow Y$ is also a proper modification.

If $\varphi$ is a bimeromorphic map, the analytic set
$$
\{(y,x)\in Y\times X\ |\ (x,y)\in G_\varphi\}\subset Y\times X
$$
defines a meromorphic map $\varphi^{-1}:Y\dashrightarrow X$ such that $\varphi\circ\varphi^{-1}=\mathrm{id}_Y$ and $\varphi^{-1}\circ\varphi=\mathrm{id}_X$.

Two complex varieties $X$ and $Y$ are called \emph{bimeromorphically equivalent} (or \emph{bimeromorphic}) if there exists a bimeromorphic map $\varphi: X\dashrightarrow Y$.
\end{defn}

Relating blow-up to bimeromorphic map, one is fortunate to own the remarkable:
\begin{thm}[{\cite[Theorem 0.3.1]{AKMW02} and \cite{Wlo03}}]\label{wft}
Let $\pi:\tilde{X}\dashrightarrow X$ be a bimeromorphic map between two compact complex manifolds  as in Definition \ref{bimero}.
Let $U$ be an open set where $\pi$ is an isomorphism.
Then $\pi$ can be factored into a sequence of blow-ups and blow-downs along irreducible nonsingular centers disjoint from $U$.
That is, to any such $\pi$ we associate a diagram
\begin{equation}\label{wf}
\pi:\tilde{X}=X_0\stackrel{\pi_1}{\dashrightarrow} X_1\stackrel{\pi_2}{\dashrightarrow}\cdots\stackrel{\pi_{i-1}}{\dashrightarrow} X_{i-1}\stackrel{\pi_{i}}{\dashrightarrow} X_{i}\stackrel{\pi_{i+1}}{\dashrightarrow}\cdots\stackrel{\pi_{l}}{\dashrightarrow} X_l=X,
\end{equation}
where
\begin{enumerate}
    \item[(i)]
              $\pi=\pi_l\circ\cdots\circ\pi_1;$
    \item [(ii)]
              $\pi_i$ are isomorphisms on $U$;
    \item [(iii)]
either $\pi_i:X_{i-1}\dashrightarrow X_{i}$ or $\pi_i^{-1}:X_{i}\dashrightarrow X_{i-1}$ is a morphism obtained by blowing up a nonsingular center disjoint from $U$.
 \end{enumerate}
\end{thm}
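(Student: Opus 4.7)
My plan is to follow the general scheme of Abramovich--Karu--Matsuki--W{\l}odarczyk, reducing the problem to a combinatorial factorization of toric birational maps, and then passing from the algebraic to the analytic setting.

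First I would reduce to the case where $\pi$ is a birational \emph{morphism} rather than just a meromorphic map. By Hironaka's theorem on resolution of the indeterminacy locus, there is a sequence of blow-ups along smooth centers disjoint from $U$ producing $\widehat{X}\to \tilde X$ such that the composition $\widehat{X}\to X$ is a holomorphic birational morphism, biholomorphic over $U$. Factorizing this composite together with the inverse of $\widehat{X}\to \tilde X$ would then yield the desired chain \eqref{wf}. So I may assume from now on that $\pi:\tilde X\to X$ is a projective birational morphism of smooth compact complex manifolds.

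Next, the key structural input is the torification principle: after further blow-ups with smooth centers disjoint from $U$, I can arrange that the exceptional locus of $\pi$ is a simple normal crossings divisor giving $\tilde X$ and $X$ compatible toroidal embeddings, and that $\pi$ becomes a toroidal birational morphism. This is the technical heart of the method and relies on W{\l}odarczyk's $\pi$-desingularization applied equivariantly to the canonical cone of $\pi$. Once toroidal, the problem becomes combinatorial: factoring a birational map of nonsingular fans into a sequence of star subdivisions and inverse star subdivisions along smooth cones (each corresponding to a smooth blow-up or blow-down). This step is supplied by Morelli's theorem, refined via W{\l}odarczyk's theory of birational cobordisms: one constructs a $\mathbb Q$-factorial toric variety with a $\mathbb C^*$-action whose GIT quotients interpolate between the source and target fans, and a Morse-theoretic analysis of the fixed loci yields the factorization into elementary modifications, which in the smooth case are smooth blow-ups and blow-downs.

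The main obstacle, and what forces the proof to be as deep as it is, is precisely the torification step: one has to glue the local toroidal structures consistently across $\tilde X$ while keeping all centers smooth, irreducible, and disjoint from $U$, and simultaneously maintain compatibility with the equivariant resolutions used on both sides. In the analytic category one cannot use generic-point arguments or GAGA directly on noncompact pieces, so I would appeal to W{\l}odarczyk's analytic version of the factorization (in \cite{Wlo03}), which runs the same machinery in the bimeromorphic setting. A secondary subtlety is that the procedure naturally produces \emph{weak} factorization (blow-ups and blow-downs intermixed, as in \eqref{wf}) rather than strong factorization (all blow-ups before all blow-downs); the latter remains open in dimension $\geq 4$, but is not needed for Theorem \ref{wft}.
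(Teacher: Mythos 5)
The paper does not prove this theorem at all: it is imported verbatim from Abramovich--Karu--Matsuki--W{\l}odarczyk \cite[Theorem 0.3.1]{AKMW02} and \cite{Wlo03} and used as a black box (e.g.\ in Corollary \ref{plg}), so there is no in-paper proof to compare yours against. What you have written is a high-level outline of the published proof rather than an argument that could stand on its own: the reduction to a birational morphism via Hironaka, the torification step, and Morelli's combinatorial factorization via $\pi$-desingularization of fans are each named but not carried out, and each is a substantial theorem in its own right. As a summary of the strategy your sketch is broadly faithful, with two caveats worth flagging. First, the order of operations in \cite{AKMW02} is the reverse of what you describe: one first constructs W{\l}odarczyk's birational cobordism (a quasi-projective variety with a $\mathbb{G}_m$-action interpolating between source and target), and only then torifies the cobordism using the torific ideal; one does not toroidalize $\pi$ directly and then invoke a cobordism inside the toric world. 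Second, the passage to the bimeromorphic/analytic category is handled in \cite{AKMW02} itself (their Theorem 0.3.1 and the functoriality/compactness arguments in the later sections), not by a separate ``analytic version'' in \cite{Wlo03}, which is the algebraic paper; the preservation of the open set $U$ and the disjointness of all centers from $U$ likewise come from the functoriality statements of \cite{AKMW02} and need to be quoted, not just asserted. If your goal is to justify the theorem as used in this paper, the correct move is simply to cite it, as the authors do; if your goal is to actually prove it, essentially all of the work remains to be done.
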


Now we state several applications of Theorem \ref{thm1} to complex algebraic geometry.

\subsection{Vanishing theorems}

 Girbau's
theorem deals with the vanishing theorem on higher-order cohomology of the line bundle being (possibly everywhere) degenerate semi-positive.
\begin{thm} Let $L$ be a holomorphic line bundle over a compact connected $n$-dimensional K\"ahler manifold $X$.
\begin{enumerate}
    \item[$(i)$] \label{Girbau0}
If the Chern curvature form $i\Theta(L)$ of $L$ is semi-positive and has at least $n-s+1$ positive eigenvalues
at a point of $X$ for some integer $s\in \{1,\cdots, n\}$, then
$H^q(X,K_X\otimes L) = 0$ for $q \geq s$. Here $K_X$ is canonical bundle of $X$.

    \item[$(ii)$]{\emph{\textbf{(\cite{Gir76})}} \label{Girbau1}
If $i\Theta(L)$ is semi-positive and has at least $n-s+1$ positive eigenvalues
at every point of $X$ for some integer $s\in \{1,\cdots, n\}$, then
$H^q(X,\Omega^p\otimes L) = 0$ for $p+q \geq n+s$.}
 \end{enumerate}
\end{thm}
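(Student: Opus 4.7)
The plan is to prove both parts of Girbau's theorem via the classical Bochner-Kodaira-Nakano (BKN) technique, refined to handle degenerate semi-positive curvature. Equip $L$ with a Hermitian metric whose Chern curvature realizes the given form $i\Theta(L)$, and fix a K\"ahler form $\omega$ on $X$. Hodge theory then identifies each class in $H^{q}(X,\Omega_{X}^{p}\otimes L)$ (respectively $H^{q}(X,K_{X}\otimes L)$) with a unique $\bar\partial$-harmonic $L$-valued $(p,q)$-form (respectively $(n,q)$-form) $u$, so it suffices to show $u\equiv 0$.

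Next, invoke the BKN identity $\Delta''=\Delta'+[i\Theta(L),\Lambda]$ on the K\"ahler manifold $X$ for $L$-valued forms. Applied to the harmonic $u$ and paired in $L^{2}$, this yields
$$\|\partial u\|^{2}+\|\partial^{*}u\|^{2}+\int_{X}\langle[i\Theta(L),\Lambda]u,u\rangle\,dV_{\omega}=0,$$
so the integral of the curvature pairing is non-positive.

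The core step is the pointwise analysis of $[i\Theta(L),\Lambda]$. At each $x\in X$, diagonalize $i\Theta(L)_{x}$ in an $\omega$-orthonormal frame, with eigenvalues $\gamma_{1}(x)\leq\cdots\leq\gamma_{n}(x)$, all non-negative by semi-positivity. For part (i), the $(n,q)$-case reduces by direct computation to $\langle[i\Theta(L),\Lambda]u,u\rangle_{x}=\sum_{|J|=q}(\gamma_{j_{1}}+\cdots+\gamma_{j_{q}})|u_{J}(x)|^{2}\geq 0$, which at the distinguished point $x_{0}$ is strictly positive for every multi-index $|J|=q\geq s$: by pigeonhole, any $q\geq s$ indices among the $n$ must meet the set of $n-s+1$ positive eigenvalues. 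For part (ii), one invokes Girbau's more delicate curvature estimate, which in this diagonalizing frame yields pointwise non-negativity and strict positivity (on nonzero forms) of $[i\Theta(L),\Lambda]$ on $(p,q)$-forms satisfying $p+q\geq n+s$, using the everywhere-positive-eigenvalue hypothesis.

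Combining pointwise positivity with the integrated identity closes both cases. For part (ii), the integrand is $\geq 0$ pointwise yet integrates to $\leq 0$, so it vanishes identically; strict positivity then forces $u\equiv 0$. For part (i), the integrand is $\geq 0$ everywhere and strictly positive in a neighborhood of $x_{0}$ (by continuity of eigenvalues), so $u$ vanishes on an open set; moreover the BKN identity also forces $\partial u=\partial^{*}u=0$, so $u$ solves the elliptic system $\Delta''u=0$, and Aronszajn's unique continuation theorem propagates the vanishing to all of $X$. The principal technical obstacle is the precise curvature estimate in part (ii) for general bidegree $(p,q)$: the $(n,q)$-case reduces to a transparent eigenvalue sum, but the general case requires a careful combinatorial tracking of the signature of $[i\Theta(L),\Lambda]$ against the positions and magnitudes of the positive eigenvalues of $i\Theta(L)$, which is the technical heart of Girbau's original argument.
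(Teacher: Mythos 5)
The paper only quotes this theorem as background (part $(ii)$ is attributed to \cite{Gir76}) and supplies no proof, so there is no in-paper argument to compare against; I assess your proposal on its own terms. Your part $(i)$ is correct and standard: for a $\bar{\partial}$-harmonic $L$-valued $(n,q)$-form the Bochner--Kodaira--Nakano curvature term is $\sum_{|J|=q}\bigl(\sum_{j\in J}\gamma_{j}\bigr)|u_{J}|^{2}\geq 0$, the pigeonhole count at the distinguished point gives strict positivity there for $q\geq s$, and Aronszajn's unique continuation propagates the local vanishing over the connected manifold $X$.

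Part $(ii)$, however, has a genuine gap. You claim that Girbau's curvature estimate yields pointwise non-negativity and strict positivity of $[i\Theta(L),\Lambda]$ on $(p,q)$-forms with $p+q\geq n+s$, computed in an $\omega$-orthonormal frame for the \emph{fixed} K\"ahler metric $\omega$. That statement is false: for $p<n$ the coefficient attached to $u_{I\bar{J}}$ is $\sum_{j\in J}\gamma_{j}-\sum_{j\notin I}\gamma_{j}$, and already for $n=3$, $s=1$, $p=q=2$, eigenvalues $(\gamma_{1},\gamma_{2},\gamma_{3})=(0.1,0.1,10)$ (all positive, so the hypothesis holds with room to spare) and $I=J=\{1,2\}$, one gets $0.2-10<0$ even though $p+q=n+s$. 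No combinatorial tracking of the signature with respect to $\omega$ can repair this; the missing idea --- the actual heart of Girbau's proof --- is to replace $\omega$ by the auxiliary K\"ahler metric $\omega_{\varepsilon}=i\Theta(L)+\varepsilon\omega$. With respect to $\omega_{\varepsilon}$ the eigenvalues become $\gamma_{j}/(\gamma_{j}+\varepsilon)\in[0,1)$, at least $n-s+1$ of them are uniformly close to $1$ (compactness bounds the $s$-th smallest eigenvalue below by some $c>0$), and the coefficient is then bounded below by $(q-s+1)(1-\varepsilon/c)-(n-p)\geq (p+q-n-s+1)-n\varepsilon/c>0$ for $\varepsilon$ small. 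Applying the Bochner--Kodaira--Nakano inequality for $\omega_{\varepsilon}$, whose harmonic theory computes the same Dolbeault groups, then gives the vanishing. Without this change of metric your argument for $(ii)$ does not close.
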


From  Theorem \ref{thm1} we can construct a series of examples for the above theorem avoiding the complicated computation.
\begin{ex}[{Ramanujam's example, \cite{Ram}, \cite[(4.5) Remark of Chapter VII]{Dem12}}] \label{Ramexample}
Here is a quick description of Ramanujam's example, which shows that Girbau's result is no longer true for $p<n$ when $i\Theta(L)$ is semi-positive and has at least $n-s+1$ positive eigenvalues even on a dense open set: Let
$$\pi: X\rightarrow \mathbb{C}\mathrm{P}^n,\quad n\geq 3$$
be the blow-up of $\mathbb{C}\mathrm{P}^n$ at a point $\{a\}$.
Then for any $m>0$, $\pi^*\mathcal{O}_{\mathbb{C}\mathrm{P}^n}(m)$ is a semipositive line bundle over $X$, positive $(s=1)$ outside $\pi^{-1}(\{a\})$ and for $1\leq p\leq n-1$,
\begin{eqnarray*}
H^p(X,\Omega_X^p\otimes \pi^*\mathcal{O}_{\mathbb{C}\mathrm{P}^n}(m))
&\cong&
H^p(\mathbb{C}\mathrm{P}^n,\Omega_{\mathbb{C}\mathrm{P}^n}^p\otimes \mathcal{O}_{\mathbb{C}\mathrm{P}^n}(m))\oplus \biggl(\bigoplus^{n-1}_{i=1}H^{p-i,p-i}(\{a\},\mathcal{O}_{\mathbb{C}\mathrm{P}^n}(m)|_{\{a\}})\biggr)\\
&\cong&
\bigoplus^{n-1}_{i=1}H^{p-i,p-i}(\{a\},\mathcal{O}_{\mathbb{C}\mathrm{P}^n}(m)|_{\{a\}}) \\
&\neq& 0
\end{eqnarray*}
directly by Bott vanishing theorem (\cite[Theorem 5.2]{Kod05} for example) and Theorem \ref{thm1}.
\end{ex}
\begin{ex}[] Here is a new positive example, which affirms that Girbau's result is possibly true for $p\neq q$ when $i\Theta(L)$ is semi-positive and positive only on a dense open set: Let $\pi: X\rightarrow \mathbb{C}\mathrm{P}^n$ $(n\geq 3)$ be the blowing up at a point $\{a\}$ in $\mathbb{C}\mathrm{P}^n$. Then for $p\neq q$,
$$H^q(X,\Omega_X^p\otimes \pi^*\mathcal{O}_{\mathbb{C}\mathrm{P}^n}(1))\cong H^q(\mathbb{C}\mathrm{P}^n,\Omega_{\mathbb{C}\mathrm{P}^n}^p\otimes \mathcal{O}_{\mathbb{C}\mathrm{P}^n}(1))\oplus
\biggl(\bigoplus^{n-1}_{i=1}H^{p-i,q-i}(\{a\},\mathcal{O}_{\mathbb{C}\mathrm{P}^n}(1)|_{\{a\}})\biggr)=0$$
directly by Theorem \ref{thm1} and Bott vanishing theorem. It seems interesting to give a direct proof of the above vanishing.
\end{ex}

Actually, Ramanujam \cite{Ram} generalized the Kodaira's vanishing theorem for big and nef line bundles on smooth projective surfaces.
Later, a higher dimensional analogy of Kodaira-Ramanujam vanishing was independently proved by Kawamata \cite{Kaw82} and Viehweg \cite{Vie82}.
By contrast of Kodaira-Nakano vanishing theorem,
the Nakano-type Kawamata-Viehweg vanishing can fail for big and nef line bundles; see for instance \cite[Example 4.3.4]{Laz04}.
Here we present a slight generalization of \cite[Example 4.3.4]{Laz04} since $\pi^{\ast}\mathcal{O}_{\mathbb{C}\mathrm{P}^{n}}(1)$ is a big and nef line bundle.
\begin{ex}[]\label{Ramexample-genl} Let $\pi: X\rightarrow \mathbb{C}\mathrm{P}^n$ $(n\geq 3)$ be the blow-up along a smooth curve $C$.
Then $\pi^*\mathcal{O}_{\mathbb{C}\mathrm{P}^n}(1)$ is a semipositive line bundle over $X$, positive $(s=1)$ outside $\pi^{-1}(C)$ and a direct application of Theorem \ref{thm1} implies that for $1\leq p\leq n-2$,
\begin{eqnarray*}
H^{p}(X,\Omega_X^p\otimes \pi^*\mathcal{O}_{\mathbb{C}\mathrm{P}^n}(1))
&\cong&
H^{p}(\mathbb{C}\mathrm{P}^n,\Omega_{\mathbb{C}\mathrm{P}^n}^p\otimes \mathcal{O}_{\mathbb{C}\mathrm{P}^n}(1))\oplus
\biggl(\bigoplus^{n-2}_{i=1}H^{p-i,p-i}(C,\mathcal{O}_{\mathbb{C}\mathrm{P}^n}(1)|_{C})\biggr)\\
&\cong& \bigoplus^{n-2}_{i=1}H^{p-i,p-i}(C,\mathcal{O}_{\mathbb{C}\mathrm{P}^n}(1)|_{C}) \\
&\cong& H^{0}(C,\mathcal{O}_{\mathbb{C}\mathrm{P}^n}(1)|_{C})\oplus\cdots \\
&\neq& 0.
\end{eqnarray*}
\end{ex}

Now let us present a counterexample to Nakano-type generic vanishing theorem.
Let $X$ be a compact, connected K\"{a}hler manifold. We denote by $\mathrm{Pic}^{0}(X)$ the identity component of the Picard group of $X$ and $\mathfrak{A}:X\rightarrow \mathrm{Alb}(X)$ the Albanese mapping of $X$.
In \cite[Theorem 1]{GL87}, Green-Lazarsfeld obtained the Kodaira-type {\it generic vanishing theorem}:
if $L\in \mathrm{Pic}^{0}(X)$ is a generic line bundle, then $H^{q}(X,L)=0$ for $q<\mathrm{dim}\, \mathfrak{A}(X)$.
However, for $p+q<\mathrm{dim}\, \mathfrak{A}(X)$,
$$H^{q}(X,\Omega_{X}^{p}\otimes L)\neq0,$$
which means that the Nakano-type generic vanishing theorem does not hold generally.

\begin{ex}[{\cite[\S 3, Remark]{GL87}}]
Suppose that $X$ is an abelian variety of dimension $n\geq 4$ and $Z\subset X$ is a smooth curve of genus $g>1$.
Let $\mathfrak{A}:\tilde{X}\rightarrow X$ be the blow-up of $X$ centered at $Z$.
Then $\mathfrak{A}$ is the Albanese mapping of $\tilde{X}$ and $\mathrm{dim}\, \mathfrak{A}(\tilde{X})=n$.
Using Theorem \ref{thm1}, we have
$$
H^{q}(\tilde{X}, \Omega_{\tilde{X}}^{p}\otimes \mathfrak{A}^{\ast}L)
\cong
H^{q}(X, \Omega_{X}^{p}\otimes L)\oplus \bigoplus_{i=1}^{n-2}H^{q-i}(Z, \Omega_{Z}^{p-i}\otimes L|_{Z})
$$
for any non-zero generic $L\in \mathrm{Pic}^{0}(X)$ and $\mathfrak{A}^{\ast}L\in \mathrm{Pic}^{0}(\tilde{X})$.
In particular, for $1\leq p<\frac{n-1}{2}$, the generic vanishing theorem implies that
\begin{eqnarray*}
H^{p+1}(\tilde{X}, \Omega_{\tilde{X}}^{p}\otimes \mathfrak{A}^{\ast}L)
&\cong&
H^{p+1}(X, \Omega_{X}^{p}\otimes L)\oplus \bigoplus_{i=1}^{n-2}H^{p+1-i}(Z, \Omega_{Z}^{p-i}\otimes L|_{Z}) \\
&\cong&
\bigoplus_{i=1}^{n-2}H^{p+1-i}(Z, \Omega_{Z}^{p-i}\otimes L|_{Z}) \\
&=& H^{1}(Z, \mathcal{O}_{Z} \otimes L|_{Z})\oplus\cdots\\
&\neq& 0,
\end{eqnarray*}
since $\Omega_{X}^{p}\cong \mathcal{O}_{X}^{\oplus \binom{n}{p}}$ and $c_{1}(\mathcal{O}_{Z} \otimes L|_{Z})=0$,
 and by Riemann-Roch theorem,
$$h^{0}(Z, \mathcal{O}_{Z} \otimes L|_{Z})-h^{1}(Z, \mathcal{O}_{Z} \otimes L|_{Z})=1-g<0$$ for $g>1$.
\end{ex}

\subsection{Blow-up invariants}
In this subsection, we always consider the blow-up $\pi:\tilde{X}\rightarrow X$ of a compact complex $n$-dimensional manifold $X$ with a center $\imath: Z\subset X$ and are able to obtain all the results applicable to a composition of finite blow-ups.

 The first consideration is the blow-up invariance for the volume of a line bundle.
Recall that
\begin{defn}[Volume of a line bundle] Let $X$ be an irreducible
compact complex space of dimension $n$, and $L$ a line bundle on $X$. The
\emph{volume} of $L$ is defined to be the non-negative real number
$$vol(L) = vol_X(L) = \limsup_{m\rightarrow \infty}\frac{h^0(X,L^{\otimes m})}{m^n/n!}.$$
The volume $vol(D)=vol_X(D)$ of a Cartier divisor $D$ is defined similarly, or
by passing to $\mathcal{O}_X(D)$.
\end{defn}

From  Fujita's vanishing theorem and Properties of the volume in \cite[Proposition 2.2.35]{Laz04}, it follows that the volume of a line bundle depends only upon its numerical equivalence class.
\begin{prop}[Blow-up invariance of volume]\label{biv}
 Given an integral or $\mathbb{Q}$-divisor $D$ on
X, put $\tilde D=\pi^*D.$ Then
$$vol_{\tilde X}(\tilde D)= vol_X(D).$$
\end{prop}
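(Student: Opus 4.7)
The plan is to reduce Proposition \ref{biv} to the simplest case of the blow-up formula and then invoke the scaling behavior of the volume. First I would treat the case where $D$ is an integral Cartier divisor. Setting $W = \mathcal{O}_X(mD)$ and applying Theorem \ref{thm1} with $p = q = 0$, the correction summands $H^{-i,-i}(Z, \imath^{\ast} W)$ all vanish by bidegree reasons, so the isomorphism \eqref{dol-bl-for} becomes
$$
H^{0}(\tilde{X}, \mathcal{O}_{\tilde{X}}(m\tilde{D}))
\;\cong\;
H^{0,0}(\tilde{X}, \pi^{\ast}W)
\;\cong\;
H^{0,0}(X, W)
\;=\;
H^{0}(X, \mathcal{O}_X(mD)).
$$
Equivalently, this is the $p = 0$ case of Lemma \ref{key-lem-1}(i), which gives $\pi_{\ast}\mathcal{O}_{\tilde X} = \mathcal{O}_X$; combined with the projection formula one obtains $\pi_{\ast}\mathcal{O}_{\tilde X}(m\tilde{D}) = \mathcal{O}_X(mD)$, from which the $H^{0}$-identification follows. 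Hence $h^{0}(\tilde X, m\tilde D) = h^{0}(X, mD)$ for every $m \geq 1$, and since $\dim \tilde X = \dim X = n$, dividing by $m^{n}/n!$ and taking $\limsup$ yields $\mathrm{vol}_{\tilde{X}}(\tilde{D}) = \mathrm{vol}_{X}(D)$.

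Next I would upgrade to the $\mathbb{Q}$-divisor case by clearing denominators. Given a $\mathbb{Q}$-divisor $D$, pick $k \in \mathbb{Z}_{>0}$ so that $kD$ is an integral Cartier divisor; then $\pi^{\ast}(kD) = k\tilde{D}$ is integral on $\tilde{X}$, and the integral case already proven gives
$$
\mathrm{vol}_{\tilde{X}}(k\tilde{D}) \;=\; \mathrm{vol}_{X}(kD).
$$
Invoking the homogeneity property $\mathrm{vol}(kD) = k^{n}\,\mathrm{vol}(D)$ on both sides (see \cite[Proposition 2.2.35]{Laz04}) and dividing by $k^{n}$ gives the desired equality $\mathrm{vol}_{\tilde{X}}(\tilde{D}) = \mathrm{vol}_{X}(D)$. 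Iterating the statement along a chain of blow-ups as in \eqref{wf} then extends the invariance to any finite composition of blow-ups with smooth centers.

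The main thing to watch is essentially a bookkeeping point: Theorem \ref{thm1} is phrased for bundle-valued Dolbeault cohomology, and one must observe explicitly that $H^{0,0}(X, \mathcal{O}_X(mD))$ coincides with the ordinary sheaf cohomology $H^{0}(X, \mathcal{O}_X(mD))$ that enters the definition of the volume. There is no serious obstacle — the required content is already distilled into Lemma \ref{key-lem-1}(i), or equivalently the bottom bidegree of Theorem \ref{thm1} — but I would flag this identification explicitly so that the proof does not read as if it used the full strength of the bundle-valued blow-up formula when in fact only its $(p,q) = (0,0)$ case is invoked.
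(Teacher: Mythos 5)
Your proof is correct and follows essentially the same route as the paper: both arguments reduce to the $p=q=0$ case of Theorem \ref{thm1}, where the summands $H^{-i,-i}(Z,\imath^{\ast}\mathcal{O}_X(mD))$ vanish for bidegree reasons, giving $h^0(\tilde X,m\tilde D)=h^0(X,mD)$ and hence equality of volumes. Your explicit reduction of the $\mathbb{Q}$-divisor case via homogeneity, and the remark identifying $H^{0,0}$ with ordinary $H^0$, merely fill in details the paper leaves implicit.
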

\begin{proof}
This is a direct corollary of Theorem \ref{thm1} when one takes $p=q=0$ and thus for each $i\geq 1$, $$H^{p-i,q-i}(Z;\imath^{*}\mathcal{O}_X(mD))=0.$$
Comparing birational invariance I of volume as in  {\cite[Proposition 2.2.43]{Laz04}}, one can obtain the estimate:
$$h^0(X,\mathcal{O}_X(mD))\leq h^0(\tilde X,\mathcal{O}_{\tilde X}(m\tilde D))=h^0(X,\mathcal{O}_X(mD))+O(m^{n-1})$$
when $\pi$ is a birational projective mapping.
\end{proof}

Next, we consider the bimeromorphic invariance of plurigenera and Kodaira dimension, and blow-up invariance of Kodaira-Iitaka dimension of holomorphic line bundle on a compact complex manifold.
We first give a geometric description of the Kodaira map. Let $X$ be a compact connected complex manifold of dimension $n$ and $L$ a holomorphic line bundle over $X$. The space for  holomorphic sections of $L$ on $X$ is finite-dimensional. Set the linear system associated to $V=H^{0}(X,L)$ as $|V|=\{\Div(s): s\in V\}$. The \emph{base point locus} of the linear system $|V|$ is given by $$\Bl_{|V|}=\cap_{s\in V} \Div(s)=\{x\in X: s(x)=0,\ \text{for all $s\in V$}\}.$$
Set $d=\dim H^{0}(X,L)$ and let $\mathbb{G}(d-1,H^{0}(X,L))$ be the Grassmannian of hyperplanes of $H^{0}(X,L)$.
\begin{defn}
  The \emph{Kodaira map} $\Phi_V$ associated to $L$ is defined by
  $$\Phi_V: X\setminus \Bl_{|V|}\longrightarrow \mathbb{G}(d-1,H^{0}(X,L)): x\longmapsto \{s\in H^{0}(X,L): s(x)=0\}.$$
\end{defn}

Now consider $V_m:=H^{0}(X,L^{\otimes m})$ and $\Phi_m:=\Phi_{V_m}$.
Set
$$
\varrho_m=
    \begin{cases}
      \max\{\rk\ \Phi_m: x\in X\setminus \Bl_{|V_m|}\},\quad &\text{if $V_m\neq\{0\}$,}\\
      -\infty,\quad &\text{otherwise.}
    \end{cases}
$$
The \emph{Kodaira-Iitaka dimension} of $L$ is
$$\kappa(L)=\max\{\varrho_m:m\in \mathbb{N}^+\}.$$
The \emph{Kodaira dimension} $\kappa(X)$ of a compact complex manifold $X$ is defined as $\kappa(K_X)$ of the canonical bundle $K_X$.
A useful characterization of Kodaira-Iitaka dimension is:
\begin{thm}[{\cite[Theorem $8.1$]{Uen75}}]\label{asymp}
For a Cartier divisor (or a line bundle) $D$ on a variety $V$, there exists positive numbers $\alpha,\beta$ and a positive integer $m_0$ such that for any integer $m\geq m_0$, there hold the inequalities
\begin{equation}\label{asymp-ineq}
 \alpha m^{\kappa(D)}\leq P_m(dD):=\dim_{\mathbb{C}}H^0 (V, \mathcal{O}_V(mdD))\leq \beta m^{\kappa(D)},
\end{equation}
where $d$ is some positive integer depending on $D$. When the divisor $D$ is effective (or $P_1(D)\neq 0$), one can take $d=1$ in \eqref{asymp-ineq}.
\end{thm}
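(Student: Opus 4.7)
The plan is to realize $\kappa(D)$ geometrically via the Iitaka fibration and then compare the plurigenera of $D$ on $V$ with the polynomial growth of sections of a hyperplane line bundle on a lower-dimensional projective variety. By the definition of the Kodaira-Iitaka dimension, I would first choose a positive integer $d$ such that $\varrho_{d}=\kappa(D)=:\kappa$, so that the rational map $\Phi_{d}:V\dashrightarrow W\subset\mathbb{P}(H^{0}(V,\mathcal{O}_{V}(dD))^{\ast})$ has closed image $W$ of dimension exactly $\kappa$. By Hironaka's resolution of indeterminacies, there exists a proper birational modification $\mu:\tilde{V}\to V$ and a surjective morphism $\tilde{\Phi}:\tilde{V}\to W$ with $\tilde{\Phi}=\Phi_{d}\circ\mu$, fitting into a decomposition
$$\mu^{\ast}\mathcal{O}_{V}(dD)\cong\tilde{\Phi}^{\ast}\mathcal{O}_{W}(1)\otimes\mathcal{O}_{\tilde{V}}(E),$$
where $E\geq 0$ is a $\mu$-exceptional effective divisor on $\tilde{V}$.

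For the upper bound, I would exploit the birational identification $H^{0}(V,\mathcal{O}_{V}(mdD))\cong H^{0}(\tilde{V},\mu^{\ast}\mathcal{O}_{V}(mdD))$, which follows from $\mu_{\ast}\mathcal{O}_{\tilde{V}}=\mathcal{O}_{V}$ and the projection formula, and then push forward along $\tilde{\Phi}$: global sections correspond to sections of $\mathcal{O}_{W}(m)\otimes\tilde{\Phi}_{\ast}\mathcal{O}_{\tilde{V}}(mE)$. A standard coherence argument bounds the rank of $\tilde{\Phi}_{\ast}\mathcal{O}_{\tilde{V}}(mE)$ uniformly in $m$, so $P_{m}(dD)$ is dominated by $h^{0}(W,\mathcal{O}_{W}(m)\otimes\mathcal{G})$ for a single coherent $\mathcal{G}$; since $W$ is projective of dimension $\kappa$, the Hilbert polynomial of this twist has degree $\kappa$, yielding $P_{m}(dD)\leq\beta m^{\kappa}$.

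For the lower bound, I would pull back sections of $\mathcal{O}_{W}(m)$ via $\tilde{\Phi}$ and multiply them by a fixed nonzero section of $\mathcal{O}_{\tilde{V}}(mE)$ (available since $E$ is effective), obtaining an injection
$$H^{0}(W,\mathcal{O}_{W}(m))\hookrightarrow H^{0}(\tilde{V},\mu^{\ast}\mathcal{O}_{V}(mdD))\cong H^{0}(V,\mathcal{O}_{V}(mdD)).$$
Since $W$ is an irreducible projective $\kappa$-dimensional variety on which $\mathcal{O}_{W}(1)$ is very ample, its Hilbert polynomial gives $h^{0}(W,\mathcal{O}_{W}(m))\geq\alpha m^{\kappa}$ for some $\alpha>0$ and all $m\gg 0$. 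When $D$ is effective (or $P_{1}(D)\neq 0$), the construction applies directly to $|D|$, and one may take $d=1$.

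The main obstacle is the careful bookkeeping of the $\mu$-exceptional divisor $E$ in both bounds: one must verify that the twists by $\mathcal{O}_{\tilde{V}}(mE)$ do not alter the leading polynomial order but only modify the constants $\alpha$ and $\beta$. This ultimately rests on the $\mu$-exceptionality of $E$ together with Grauert-type semicontinuity, ensuring that $\tilde{\Phi}_{\ast}\mathcal{O}_{\tilde{V}}(mE)$ remains a coherent sheaf on $W$ of bounded generic rank as $m$ varies.
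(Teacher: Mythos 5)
The paper does not actually prove this statement --- it is quoted from Ueno's book (Theorem 8.1 there) without proof --- so your proposal must be measured against the classical Iitaka--Ueno argument. Your lower bound is essentially that argument and is correct: pull back sections of $\mathcal{O}_{W}(m)$ along the resolved Iitaka map and multiply by the canonical section of $\mathcal{O}_{\tilde{V}}(mE)$. Two small inaccuracies there: $E$ is the fixed part of $\mu^{\ast}|dD|$ and is in general \emph{not} $\mu$-exceptional, since fixed components of $|dD|$ living on $V$ itself survive in $E$; and the reduction to $d=1$ for effective $D$ is not because ``the construction applies directly to $|D|$'' (the image of $\Phi_{|D|}$ may have dimension smaller than $\kappa(D)$), but because $h^{0}(mD)$ is nondecreasing in $m$ when $D$ is effective, which lets one interpolate between consecutive multiples of $d$.

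The genuine gap is in the upper bound. You assert that ``a standard coherence argument bounds the rank of $\tilde{\Phi}_{\ast}\mathcal{O}_{\tilde{V}}(mE)$ uniformly in $m$,'' so that $P_{m}(dD)\leq h^{0}(W,\mathcal{O}_{W}(m)\otimes\mathcal{G})$ for a single coherent $\mathcal{G}$. Neither claim follows from coherence or from Grauert-type semicontinuity, which concerns varying the base point for fixed $m$, not varying $m$. The generic rank of $\tilde{\Phi}_{\ast}\mathcal{O}_{\tilde{V}}(mE)$ equals $h^{0}\bigl(\tilde{V}_{w},mE|_{\tilde{V}_{w}}\bigr)=h^{0}\bigl(\tilde{V}_{w},m\mu^{\ast}(dD)|_{\tilde{V}_{w}}\bigr)$ for general $w$ (since $\tilde{\Phi}^{\ast}\mathcal{O}_{W}(1)$ is trivial on fibres), and a priori this can grow polynomially in $m$; it is uniformly bounded exactly when $\kappa\bigl(dD|_{\tilde{V}_{w}}\bigr)\leq 0$. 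That the restriction of $D$ to the general fibre of the Iitaka map has Iitaka dimension at most zero is precisely the hard content of the Iitaka fibration theorem, and it requires choosing $d$ so that the maps $\Phi_{mdD}$ have stabilized birationally, not merely so that $\dim\Phi_{dD}(V)=\kappa$. Moreover the sheaves $\tilde{\Phi}_{\ast}\mathcal{O}_{\tilde{V}}(mE)$ form an increasing chain that need not inject into any fixed coherent sheaf, so even granting bounded generic rank one still needs an argument (Iitaka's induction on $\kappa$ by cutting with general members of the free part $|\tilde{\Phi}^{\ast}\mathcal{O}_{W}(1)|$, or the rank-one pushforward argument as in Lazarsfeld's book) to conclude $h^{0}\leq\beta m^{\kappa}$. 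As written, the upper bound --- the substantive half of the theorem --- is assumed rather than proved.
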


\begin{cor}\label{plg} Let $\nu: \tilde{X}\rightarrow X$ be a
bimeromorphic map of two compact complex manifolds $\tilde{X}$ and $X$.
Then there hold the equalities
$$P_m(\tilde{X})=P_m({X}),\quad \kappa(\tilde{X})=\kappa({X})$$ for $P_m(\bullet)=\dim_{\mathbb{C}}H^0({\bullet}, K_{{\bullet}}^{\otimes m})$. In particular, if $\pi:\tilde{X}\rightarrow X$ is the blow-up morphism and $L$ is a  holomorphic line bundle over $X$, then
$$\kappa(L)=\kappa(\pi^*L).$$
\end{cor}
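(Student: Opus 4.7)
The plan is to reduce to a single blow-up via weak factorization and then treat each statement by a different route. By Theorem \ref{wft}, any bimeromorphic map $\nu:\tilde X\dashrightarrow X$ factors into a finite sequence in which each step is either a blow-up along a smooth center or its inverse, so it suffices to verify the claimed equalities when $\pi:\tilde X\to X$ is a single blow-up with smooth center $Z$ of codimension $r\geq 2$.

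For the Kodaira--Iitaka dimension of a line bundle $L\to X$, I would apply Theorem \ref{thm1} with $p=q=0$ and $W=L^{\otimes m}$: the extra summands $H^{-i,-i}(Z,\imath^{*}L^{\otimes m})$ with $i\geq 1$ vanish for degree reasons, whence
\[
H^{0}(\tilde X,\pi^{*}L^{\otimes m})\cong H^{0}(X,L^{\otimes m})\qquad\text{for every }m\geq 1.
\]
Invoking the characterisation in Theorem \ref{asymp} of $\kappa$ by the polynomial growth rate of $\dim H^{0}$, one concludes $\kappa(\pi^{*}L)=\kappa(L)$ immediately.

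For the plurigenera, Theorem \ref{thm1} does not apply to $K_{\tilde X}^{\otimes m}$ directly since $K_{\tilde X}$ is not a pullback from $X$; instead I would use the standard canonical bundle formula $K_{\tilde X}\cong\pi^{*}K_{X}\otimes\mathcal{O}_{\tilde X}((r-1)E)$ together with the projection formula to obtain
\[
\pi_{*}K_{\tilde X}^{\otimes m}\cong K_{X}^{\otimes m}\otimes\pi_{*}\mathcal{O}_{\tilde X}\bigl(m(r-1)E\bigr)\cong K_{X}^{\otimes m},
\]
the final isomorphism resting on the classical identity $\pi_{*}\mathcal{O}_{\tilde X}(kE)=\mathcal{O}_{X}$ for every $k\geq 0$; this follows from the Hartogs extension theorem, because a section of $\mathcal{O}_{\tilde X}(kE)$ over $\pi^{-1}(U)$ restricts to a holomorphic function on $U\setminus Z$ which extends across $Z$ thanks to $\mathrm{codim}_{\mathbb C}Z\geq 2$. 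Taking global sections then yields $P_{m}(\tilde X)=P_{m}(X)$, and $\kappa(\tilde X)=\kappa(X)$ follows from Theorem \ref{asymp} applied to $K_{X}$ and $K_{\tilde X}$. The only mildly delicate point in the argument is this Hartogs step, which bypasses the inability of Theorem \ref{thm1} to reach $K_{\tilde X}^{\otimes m}$ in the non-pullback range $m\geq 2$; everything else is a bookkeeping exercise combining Theorems \ref{wft}, \ref{thm1} and \ref{asymp}.
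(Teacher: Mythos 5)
Your proposal is correct and follows essentially the same route as the paper: reduce to a single blow-up by the weak factorization theorem, handle $\kappa(L)=\kappa(\pi^{*}L)$ via the $p=q=0$ case of Theorem \ref{thm1} together with Theorem \ref{asymp}, and handle plurigenera via the canonical bundle formula $K_{\tilde X}\cong\pi^{*}K_{X}\otimes\mathcal{O}_{\tilde X}((r-1)E)$ plus an extension-across-codimension-$\geq 2$ argument. Your phrasing of that last step as $\pi_{*}\mathcal{O}_{\tilde X}(kE)=\mathcal{O}_{X}$ via Hartogs is just the sheafified form of the paper's statement that a section of $\nu^{*}(K_{X}^{\otimes m})\otimes\mathcal{O}(m(r-1)E)$ restricts to a section of $K_{X}^{\otimes m}$ off the fundamental locus and extends by normality, so the two arguments coincide in substance.
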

\begin{proof} Compare the classical proof in \cite[Lemma 6.3, Corollary 6.4, Theorem 5.13]{Uen75} or  \cite[Example 2.1.16]{Laz04}.

Without loss of generality, one may assume that $\nu$ is a blow-up morphism along the smooth center with codimension $r$ with the exceptional divisor $E$ by the weak factorization Theorem \ref{wft}. Then
$$\nu^*: H^0({X}, K_{{X}}^{\otimes m}) \rightarrow H^0(\tilde{X}, K_{\tilde{X}}^{\otimes m})$$
is an isomorphism. Actually, notice
$$K_{\tilde{X}}=\nu^* K_X \otimes \mathcal{O}((r-1)E)$$ as shown in
\cite[(12.7) Proposition of Chapter VII]{Dem12}, and thus
$$H^0(\tilde{X}, \nu^*(K_{{X}}^{\otimes m}))\stackrel{\cong}{\longrightarrow} H^0(\tilde{X}, \nu^*(K_{{X}}^{\otimes m})\otimes \mathcal{O}(m(r-1) E))=H^0(\tilde{X}, K_{\tilde{X}}^{\otimes m}),$$
where the group on the left-hand side is isomorphic to $H^0({X}, K_{{X}}^{\otimes m})$ directly by Theorem \ref{thm1}, while a section of the group $H^0(\tilde{X}, \nu^*(K_{{X}}^{\otimes m})\otimes \mathcal{O}(m(r-1) E))$
 restricts to one of $K_{{X}}^m$ defined off the codimension $\geq 2$ fundamental locus of $\nu$, which extends to the whole $X$ by normality.

 Then the Kodaira(-Iitaka) dimension case is a direct result of Theorem \ref{asymp} (and Theorem \ref{thm1}).
\end{proof}

Similarly, one can also obtain that the \emph{holomorphic Euler characteristic }
$$\chi(X,F)=\sum_{i=1}^{\dim_\mathbb{C} X}(-1)^{i}\dim_\mathbb{C} H^i(X,F)$$
for a holomorphic vector bundle $F$ over a compact complex manifold $X$ is a blow-up invariant in the sense:
\begin{cor}\label{hec} With the above setting,
there holds the equality
$$\chi(X,F)=\chi(\tilde X, \pi^*F).$$
\end{cor}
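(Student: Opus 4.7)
The plan is to reduce the statement to Theorem \ref{thm1} with $p=0$, where the formula collapses dramatically. First, I would invoke the Dolbeault theorem (as recorded in Section \ref{sec-2}) to rewrite the sheaf cohomologies appearing in the Euler characteristic in Dolbeault form:
\begin{equation*}
\chi(X,F)=\sum_{q\geq 0}(-1)^{q}\dim_{\mathbb{C}}H^{q}(X,\mathcal{O}(F))=\sum_{q\geq 0}(-1)^{q}\dim_{\mathbb{C}}H^{0,q}(X,F),
\end{equation*}
and similarly for $\chi(\tilde{X},\pi^{\ast}F)$ in terms of $H^{0,q}(\tilde{X},\pi^{\ast}F)$.

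Next, I would apply the blow-up formula \eqref{dol-bl-for} of Theorem \ref{thm1} to the holomorphic vector bundle $F$ with $p=0$. For any $i\geq 1$ one has $p-i=-i<0$, so every summand
\begin{equation*}
H^{p-i,q-i}(Z,\imath^{\ast}F)=H^{-i,q-i}(Z,\imath^{\ast}F)
\end{equation*}
vanishes identically. Consequently the formula degenerates to the canonical isomorphism
\begin{equation*}
H^{0,q}(\tilde{X},\pi^{\ast}F)\cong H^{0,q}(X,F)\qquad\text{for every }q\geq 0.
\end{equation*}

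Taking the alternating sum of dimensions over $q$ then yields $\chi(\tilde{X},\pi^{\ast}F)=\chi(X,F)$, completing the proof. There is essentially no obstacle here; the only subtlety worth flagging is that the statement depends on the convention that $H^{a,b}(\cdot,\cdot)=0$ whenever $a<0$ or $b<0$, which is already built into the Dolbeault formalism of Section \ref{sec-2}. The iteration to a composition of finitely many blow-ups (as in the blanket convention of Section \ref{sec-6}) is then immediate by applying the one-step invariance repeatedly.
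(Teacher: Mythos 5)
Your proof is correct and follows the same route the paper intends: specialize Theorem \ref{thm1} to $p=0$, observe that all summands $H^{-i,q-i}(Z,\imath^{\ast}F)$ with $i\geq 1$ vanish so that $H^{0,q}(\tilde{X},\pi^{\ast}F)\cong H^{0,q}(X,F)$ for every $q$, and take the alternating sum of dimensions (this is exactly the mechanism used for Proposition \ref{biv} and Corollary \ref{plg}, to which the paper's ``similarly'' refers). No gaps.
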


Notice that it seems not easy to obtain Corollary \ref{hec} by the remarkable Hirzebruch-Riemann-Roch theorem, which is usually used to compute the holomorphic Euler characteristic, that is, for a holomorphic vector bundle $F$ over $X$, the holomorphic Euler characteristic $\chi(X,F)$ can be computed as
  $$\chi(X,F)=\int_X ch(F)\cdot td(X),$$
  where the exponential Chern character $ch(F)$ is $$\rk\ F+c_1(F)+\frac{1}{2!}(c_1^2(F)-c_2(F))+\frac{1}{3!}(c_1^3(F)-3c_1(F)c_2(F)+3c_3(F))+\cdots,$$
  and
  the Todd class $td(X)$ of the holomorphic tangent bundle $T^{\prime}_X$ of $X$ is
  $$1+\frac{1}{2}c_1(T^{\prime}_X)+\frac{1}{12}(c_1^2(T^{\prime}_X)+c_2(T^{\prime}_X))+\frac{1}{24}c_1(T^{\prime}_X)c_2(T^{\prime}_X)+\cdots.$$

It is worth introducing:
\begin{defn}
Let $X$ and $Y$ be compact complex manifolds.
We say that $X$ and $Y$ are {\it strongly K-equivalent} if,
there are the correspondences
$$
\xymatrix{
& Z \ar[dl]_{\pi_X} \ar[dr]^{\pi_Y} \\
X   &&     Y }
$$
where $\pi_X, \pi_Y$ are the compositions of finite blow-ups, and $\pi_X^{\ast} K_{X}\cong \pi_Y^{\ast}K_{Y}$.
\end{defn}
Then with this setting, the similar argument to Corollary \ref{plg} gives
$$
\chi(X, K_{X}^{\otimes m})=\chi(Y, K_{Y}^{\otimes m}).
$$

\subsection{Blow-up formula for bundle-valued Hochschild homology}

Let $X$ be a compact complex manifold of dimension $n$ and $W$ a locally sheaf of constant rank on $X$.
Recall that the {\it Hochschild homology of $X$ with value in $W$} is given by
$$
\mathrm{HH}_{k}(X, W):=
\mathrm{Tor}_{k}^{X\times X}(\mathcal{O}_{\Delta}, \Delta_{\ast}W), \,\;-n\leq k\leq n,
$$
where $\mathcal{O}_{\Delta}$ is the structure sheaf
of the diagonal embedding $\Delta: X \hookrightarrow X\times X$.
By the Hochschild-Kostant-Rosenberg Theorem as in \cite[Corollary 3.1.4]{BF08},
we have
$$
\mathrm{HH}_{k}(X, W)
\cong
\bigoplus_{p-q=k} H^{q}(X, \Omega_{X}^p\otimes W)
\cong
\bigoplus_{p-q=k} H^{p, q}(X, W)
$$
for any $-n\leq k\leq n$.

\begin{cor}
Let $Z\subset X$ be a closed complex submanifold of complex codimension $r\geq2$.
For any $-n\leq k\leq n$,
there is an isomorphism of Hochschild homologies
$$\label{buf}
\mathrm{HH}_{k}(\tilde{X}, \tilde{W})
\cong
\mathrm{HH}_{k}(X, W) \oplus \mathrm{HH}_{k}(Z, \imath^*W)^{\oplus (r-1)},
$$
where $\pi:\tilde{X}\rightarrow X$ is the blow-up of $X$ along $Z$, and $\tilde{W}:=\pi^{\ast}W$.
\end{cor}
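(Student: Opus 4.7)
The plan is to derive this corollary as a direct consequence of Theorem \ref{thm1} combined with the Hochschild–Kostant–Rosenberg isomorphism recalled just above. First I would invoke HKR on each side to translate the statement into one about bundle-valued Dolbeault cohomology, namely
\[
\mathrm{HH}_{k}(\tilde{X}, \tilde{W})
\;\cong\;
\bigoplus_{p-q=k} H^{p,q}(\tilde{X}, \tilde{W}),
\qquad
\mathrm{HH}_{k}(X, W)
\;\cong\;
\bigoplus_{p-q=k} H^{p,q}(X, W),
\]
and analogously for $(Z,\imath^{\ast}W)$. Since the corollary concerns only an additive statement, it suffices to verify the isomorphism summand by summand after applying HKR.

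Next, for each pair $(p,q)$ with $p-q=k$, I would feed $H^{p,q}(\tilde{X},\tilde{W})$ into Theorem \ref{thm1}, obtaining the canonical decomposition
\[
H^{p,q}(\tilde{X},\tilde{W})
\;\cong\;
H^{p,q}(X,W)\;\oplus\;\bigoplus_{i=1}^{r-1} H^{p-i,q-i}(Z,\imath^{\ast}W),
\]
where by convention any term with a negative index or with an index exceeding $\dim_{\mathbb{C}} Z=n-r$ is understood to be zero. Summing over all $(p,q)$ with $p-q=k$ and interchanging the two direct sums, the first block reassembles to $\mathrm{HH}_{k}(X,W)$ via HKR on $X$.

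The crucial (but entirely bookkeeping) observation is that the shift $(p,q)\mapsto(p-i,q-i)$ preserves the difference $p-q=k$. Hence for each fixed $i\in\{1,\dots,r-1\}$ the double sum
\[
\bigoplus_{p-q=k} H^{p-i,q-i}(Z,\imath^{\ast}W)
\;\cong\;
\bigoplus_{p'-q'=k} H^{p',q'}(Z,\imath^{\ast}W)
\;\cong\;
\mathrm{HH}_{k}(Z,\imath^{\ast}W),
\]
by re-indexing $(p',q')=(p-i,q-i)$ and applying HKR on $Z$. Assembling the $r-1$ copies yields the claimed formula. There is no real obstacle here: the only point that deserves a brief comment is the convention that out-of-range Dolbeault groups vanish, which guarantees that the re-indexing argument is valid for all $k$ in the range $-n\le k\le n$, including the boundary values where some summands are automatically zero.
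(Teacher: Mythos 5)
Your proposal is correct and follows exactly the same route as the paper: apply HKR on both sides, decompose each $H^{p,q}(\tilde{X},\tilde{W})$ via Theorem \ref{thm1}, interchange the direct sums, and observe that the shift $(p,q)\mapsto(p-i,q-i)$ preserves $p-q=k$ so each of the $r-1$ blocks reassembles into $\mathrm{HH}_{k}(Z,\imath^{\ast}W)$. Your remark about the vanishing convention for out-of-range indices is a sensible addition but does not change the argument.
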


\begin{proof}
This is a combination result of our main Theorem \ref{thm1}
and HKR theorem and see also \cite[Corollary 1.7]{RYY}$_{v3}$.
In fact,  we have the following isomorphisms:
\begin{eqnarray*}
\mathrm{HH}_{k}(\tilde{X}, \tilde{W})
&\cong& \bigoplus_{p-q=k} H^{p, q}(\tilde{X}, \tilde{W}) \;\;(\textrm{HKR Theorem for $(\tilde{X},\tilde{W})$})\\
&\cong &\bigoplus_{p-q=k}  \Big( H^{p,q}(X, W)\oplus \bigoplus_{i=1}^{r-1} H^{p-i,q-i}(Z, \imath^*W) \Big) \;\; (\textrm{Theorem \ref{thm1}}) \\
&\cong & \bigoplus_{p-q=k} H^{p, q}(X,W) \oplus
\bigoplus_{i=1}^{r-1} \Big( \bigoplus_{p-q=k} H^{p-i, q-i}(Z, \imath^*W) \Big)\\
&\cong & \mathrm{HH}_{k}(X, W) \oplus \mathrm{HH}_{k}(Z, \imath^*W)^{\oplus (r-1)} \;\;(\textrm{HKR theorem for $(X, W)$, $(Z, \imath^*W)$})
\end{eqnarray*}
for any $-n\leq k\leq n$.
\end{proof}

\subsection{Hochschild cohomology under blowing up}

Let $X$ be a compact complex manifold of dimension $n$.
Recall that the {\it Hochschild cohomology} of $X$ is given by
$$
\mathrm{HH}^{k}(X):=
\mathrm{Ext}_{X\times X}^{k}(\mathcal{O}_{\Delta}, \mathcal{O}_{\Delta}), \;\,0\leq k\leq n.
$$
By the Hochschild-Kostant-Rosenberg Theorem as in \cite[Corollary 4.2]{Cal05},
one has
$$
\mathrm{HH}^{k}(X)
\cong
\bigoplus_{p+q=k} H^{q}(X, \wedge^pT^{\prime}X)
$$
for any $0\leq k\leq n$. Here comes a natural:
\begin{quest}
Is there a blow-up formula for the Hochschild cohomology of compact complex manifolds?
\end{quest}

At present, it seems that it is hard to build an explicit blow-up formula for the Hochschild cohomology; even for smooth projective varieties.
By Hochschild-Kostant-Rosenberg Theorem,
one possible way to handle this question is to understand the cohomology $H^{q}(X, \wedge^pT^{\prime}X)$ under blowing up.
Using our blow-up formula, we will discuss some relations between $H^{q}(X, \wedge^pT^{\prime}X)$ and $H^{q}(\tilde{X}, \wedge^pT^{\prime}{\tilde{X}})$.
First of all, Serre duality gives
$$
H^{q}(X, \wedge^pT^{\prime}X)\cong H^{n-q}(X, \Omega_{X}^{p}\otimes  \omega_{X})^{\ast},
$$
where $\omega_{X}$ is the canonical sheaf of $X$.
Then from the blow-up formula in Theorem \ref{thm1}, it follows
\begin{equation}\label{impHH}
H^{s}(\tilde{X}, \Omega_{\tilde{X}}^{p}\otimes  \pi^{\ast}\omega_{X})
\cong
H^{s}(X, \Omega_{X}^{p}\otimes  \omega_{X})
\oplus \bigoplus_{i=1}^{r-1} H^{s-i}(Z, \Omega_{Z}^{p-i}\otimes  \imath^{\ast}\omega_{X}).
\end{equation}
Set $D:=(r-1)E$ as $r\geq 2$ and the structure sheaf sequence yields  a short exact sequence
 \begin{equation}\label{HHstructsf-seq}
\xymatrix@C=0.5cm{
  0 \ar[r] &  \mathcal{O}_{\tilde{X}}  \ar[r] &  \mathcal{O}_{\tilde{X}}(D)  \ar[r] & j_{\ast}j^{\ast}\mathcal{O}_{\tilde{X}}(D)   \ar[r] & 0,}
\end{equation}
where $j: D\hookrightarrow \tilde{X}$.
Tensoring \eqref{HHstructsf-seq} with $\Omega_{\tilde{X}}^{p}\otimes\pi^{\ast}\omega_{X}$, we get
 \begin{equation}\label{twistHHstructsf-seq}
\xymatrix@C=0.5cm{
  0 \ar[r] & \Omega_{\tilde{X}}^{p}\otimes \pi^{\ast}\omega_{X}  \ar[r] &  \Omega_{\tilde{X}}^{p}\otimes\omega_{\tilde{X}}\ar[r] & \Omega_{\tilde{X}}^{p}\otimes \pi^{\ast}\omega_{X} \otimes j_{\ast}j^{\ast}\mathcal{O}_{\tilde{X}}(D)  \ar[r] & 0},
\end{equation}
since $\omega_{\tilde{X}}=\pi^{\ast}\omega_{X}\otimes \mathcal{O}_{\tilde{X}}((r-1)E)=\pi^{\ast}\omega_{X}\otimes \mathcal{O}_{\tilde{X}}(D)$.
On the long exact sequence \eqref{twistHHstructsf-seq} of sheaf cohomologies,
\eqref{impHH} implies a long exact sequence
$$
\begin{tikzcd}
\cdots\rar & H^{q}(X, \wedge^pT^{\prime}X)^{\ast}\oplus \bigoplus_{i=1}^{r-1} H^{q-i}(Z, \Omega_{Z}^{p-i}(\imath^{\ast}\omega_{X})) \rar & H^{q}(\tilde{X}, \wedge^pT^{\prime}{\tilde{X}})^{\ast}   \ar[out=-10, in=170]{dl} \\
& \quad\quad\quad\quad\quad\quad  H^{q}(D, j^{\ast}(\Omega_{\tilde{X}}^{p}(\pi^{\ast}\omega_{X})\otimes\mathcal{O}_{\tilde{X}}(D)) ) \rar & \cdots.
\end{tikzcd}
$$
In particular, as the codimension $r=2$,
we obtain
$$
\begin{tikzcd}
\cdots\rar & H^{q}(X, \wedge^pT^{\prime}X)^{\ast}\oplus H^{q-1}(Z, \Omega_{Z}^{p-1}(\imath^{\ast}\omega_{X})) \rar & H^{q}(\tilde{X}, \wedge^pT^{\prime}{\tilde{X}})^{\ast}   \ar[out=-10, in=170]{dl} \\
&\quad\quad\quad\quad\quad\quad H^{q}(E, j^{\ast}(\Omega_{\tilde{X}}^{p}(\pi^{\ast}\omega_{X})\otimes\mathcal{O}_{\tilde{X}}(E)) ) \rar & \cdots.
\end{tikzcd}
$$

\appendix
\section{Borel spectral sequence for complex analytic bundles}\label{app-1}

Let $\xi=(E,B,F,\pi)$ be a complex analytic fibre bundle, where $E,B,F$ are connected and $F$ is compact.
Let $W$ be a complex vector bundle on $B$, and $\tilde{W}=\pi^{\ast}W$ the pullback on $E$.
Suppose that $\{f_{\alpha\beta}\}$ is the transition functions of $W$ with respect to a suitable open covering $\{U_{\alpha}\}_{\alpha\in\Lambda}$,
where
$
f_{\alpha\beta}:U_{\alpha}\cap U_{\beta}\rightarrow G.
$
From definition, there exist a representation of $G$ over $H^{p,q}_{\bar{\partial}}(F)$ denoted by $\varphi^{0}:G\rightarrow\textmd{GL}(H^{p,q}_{\bar{\partial}}(F))$.
Consider the collection
$$
\mathfrak{C}^{p,q}:=\bigcup_{b\in B}H^{p,q}_{\bar{\partial}}(F_{b}),
$$
where $F_{b}=\pi^{-1}(b)$.
The collection $\mathfrak{C}^{p,q}$ forms a smooth vector bundle $\mathbf{H}^{p,q}(F)$ over $B$
with the transition functions $\{f^{0}_{\alpha\beta}:=\varphi^{0}\circ f_{\alpha\beta}\}$.
Let $\mathbf{H}_{\bar{\partial}}(F)$ be the direct sum of $\mathbf{H}^{p,q}(F)$, i.e.,
$$
\mathbf{H}_{\bar{\partial}}(F)
=\bigoplus_{p,q}\mathbf{H}^{p,q}(F).
$$
Then we have
\begin{lem}
[{\cite[Appendix Two, \S 1]{Hir66}}]\label{lem3.1}
If $\varphi^{0}$ is constant on the connected components of $G$, in particular if $F$ is K\"{a}hler, then $\mathbf{H}_{\bar{\partial}}(F)$ is a holomorphic vector bundle over $B$.
\end{lem}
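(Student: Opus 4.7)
The goal is to exhibit $\mathbf{H}^{p,q}(F)$ as a holomorphic vector bundle over $B$ by verifying that its transition cocycle $\{f^{0}_{\alpha\beta}\}$ is holomorphic; the direct sum $\mathbf{H}_{\bar{\partial}}(F)$ then inherits a holomorphic structure. My plan has two parts, the second of which is where the K\"ahler hypothesis actually enters.

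First, assume $\varphi^{0}$ is constant on every connected component of $G$. Then I would argue that each $f^{0}_{\alpha\beta}=\varphi^{0}\circ f_{\alpha\beta}$ is locally constant on $U_{\alpha}\cap U_{\beta}$. Indeed, $f_{\alpha\beta}$ is holomorphic hence continuous, so it maps every connected component of $U_{\alpha}\cap U_{\beta}$ into a single connected component of $G$, on which $\varphi^{0}$ takes a single value in $\mathrm{GL}(H^{p,q}_{\bar{\partial}}(F))$. A locally constant matrix-valued cocycle is tautologically holomorphic (in fact flat), so the clutching construction produces a holomorphic vector bundle structure on $\mathbf{H}^{p,q}(F)$, and the direct sum over $(p,q)$ finishes this half of the lemma.

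Second, when $F$ is compact K\"ahler, I need to verify the hypothesis on $\varphi^{0}$. Each $g\in G$ acts on $F$ by a biholomorphism whose pullback preserves the bidegree of forms, so $g^{\ast}$ descends to an endomorphism $\varphi^{0}(g)$ of each $H^{p,q}_{\bar{\partial}}(F)$. For $g$ in the identity component $G_{0}$, choose a continuous arc $\gamma\colon[0,1]\to G_{0}$ from $e$ to $g$; this is a smooth homotopy from $\mathrm{id}_{F}$ to $g$, which forces $g^{\ast}$ on the de Rham cohomology $H^{k}(F;\mathbb{C})$ to coincide with the identity. Because $F$ is compact K\"ahler, the Hodge decomposition $H^{k}(F;\mathbb{C})=\bigoplus_{p+q=k}H^{p,q}_{\bar{\partial}}(F)$ is canonical and is preserved by any biholomorphism (pullback respects the bigrading at the level of forms), so the identity on $H^{k}(F;\mathbb{C})$ restricts to the identity on each Hodge summand. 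Hence $\varphi^{0}(g)=\mathrm{id}$ for every $g\in G_{0}$, and the homomorphism property of $\varphi^{0}$ then shows it is constant on each connected component of $G$, with value determined by an arbitrary representative of that component.

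The only delicate step is the K\"ahler input: without the canonicity of the Hodge decomposition one cannot descend the homotopy invariance from $H^{k}(F;\mathbb{C})$ to $H^{p,q}_{\bar{\partial}}(F)$, and indeed for non-K\"ahler $F$ the representation $\varphi^{0}$ can be nontrivial on $G_{0}$. Once the K\"ahler assumption rules this out, the bundle-theoretic conclusion is automatic, so Lemma \ref{lem3.1} follows from the general clutching principle applied to the locally constant cocycle $\{f^{0}_{\alpha\beta}\}$.
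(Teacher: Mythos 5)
Your proof is correct: the paper gives no argument for this lemma at all, citing only Hirzebruch's Appendix Two, and your two-step reasoning --- a locally constant cocycle $\{f^{0}_{\alpha\beta}\}$ is tautologically holomorphic (indeed flat), and for compact K\"ahler $F$ the homotopy invariance of de Rham cohomology together with the metric-independent, biholomorphism-equivariant Hodge decomposition forces $\varphi^{0}$ to be trivial on the identity component $G_{0}$ and hence constant on each component --- is precisely the standard argument found in that reference. There are no gaps.
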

In particular, the following theorem attributed to Borel \cite[Appendix Two, 2.1]{Hir66} provides us with an approach to compute the Dolbeault cohomology with values in $\tilde{W}$.
\begin{thm}\label{thm3.2}
Assume that every connected component of the structure group $G$ of $\xi$ acts trivially on $H_{\bar{\partial}}(F)$, and
then there exists a spectral sequence $(E_{k},d_{k})$ ($k\geq0$) with the following properties:
\begin{itemize}
  \item [(i)]$E_{k}$ is 4-graded, by the fibre degree, the base-degree and the type.
              Let $^{p,q}E^{s,t}_{k}$ be the subspace of elements of $E_{k}$ of type $(p,q)$, fibre-degree $s$, base-degree $t$,
              we have $^{p,q}E^{s,t}_{k}=0$ if $p+q\neq s+t$ or if one of $p,q,s,t$ is $<0$.
              The differential $d_{k}$ is
              $$
              d_{k}:{^{p,q}E^{s,t}_{k}}\rightarrow {^{p,q+1}E^{s+k,t-k+1}_{k}}.
              $$
  \item [(ii)] If $p+q=s+t$, we have
              $$
              ^{p,q}E^{s,t}_{2}\cong\sum_{i\geq0}H^{i,s-i}(B,W\otimes\mathbf{H}^{p-i,q-s+1}(F)).
              $$
  \item [(iii)]The spectral sequence converges to $H(E,\tilde{W})$.
              For all $p,q\geq0$ we have
              $$
              \mathrm{Gr}H^{p,q}(E,\tilde{W})
              =\sum_{s+t=p+q}{^{p,q}E^{s,t}_{\infty}}
              $$
              for a suitable filtration of $H^{p,q}(E,\tilde{W})$.
\end{itemize}
\end{thm}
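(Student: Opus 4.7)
The plan is to extend the classical argument for the Borel spectral sequence, as given in Hirzebruch's book for the trivial coefficient case, to coefficients in the pullback bundle $\tilde{W}=\pi^{\ast}W$. The starting point is to equip the Dolbeault complex $A^{\bullet,\bullet}(E,\tilde{W})$ with the \emph{Borel filtration} by base-degree. Concretely, after choosing a smooth splitting $TE\cong \pi^{\ast}TB\oplus T(E/B)$ (for instance via a smooth Ehresmann connection compatible with the structure group of $\xi$), forms on $E$ acquire a bigrading by fibre-degree and base-degree; and, setting
$$
L^{s}A^{p,q}(E,\tilde{W})=\bigl\{\alpha\in A^{p,q}(E,\tilde{W})\,\bigl|\,\text{base-degree of }\alpha\geq s\bigr\},
$$
one obtains a decreasing filtration. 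Because $\tilde{W}=\pi^{\ast}W$ is pulled back from $B$, the local frames of $\tilde{W}$ depend only on base coordinates, and so the filtration $L^{\bullet}$ on $A^{\bullet,\bullet}(E,\tilde{W})$ is a filtration by subcomplexes for $\bar{\partial}$: this is the first point to check, and it amounts to the observation that differentiating along a holomorphic horizontal direction cannot lower the base-degree and that $\bar{\partial}$ acts trivially on the frames of $\tilde{W}$.

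Next I would compute the low pages of the associated spectral sequence $(E_{k},d_{k})$. The $E_{0}$-term at fibre-degree $p+q-s$ and base-degree $s$ is naturally identified with the space of sections of $\wedge^{\bullet}T^{\ast}B\otimes_{\mathbb{C}} W\otimes_{\mathbb{C}}\underline{A^{\bullet,\bullet}(F)}$, where the last factor denotes the smooth vector bundle over $B$ with fibre the Dolbeault complex of $F_{b}$. The $d_{0}$-differential is fibrewise $\bar{\partial}$, so the $E_{1}$-page is identified with
$$
^{p,q}E^{s,t}_{1}\cong\sum_{i\geq 0}A^{i,s-i}(B,W\otimes \mathbf{H}^{p-i,q-s+i}(F)),
$$
using the hypothesis that every connected component of $G$ acts trivially on $H_{\bar{\partial}}(F)$ to conclude, via Lemma \ref{lem3.1}, that $\mathbf{H}^{p-i,q-s+i}(F)$ is a genuine holomorphic bundle on $B$ (rather than merely a locally constant sheaf of complex vector spaces); a standard partition-of-unity argument patches the local K\"unneth description into this global one. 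The $d_{1}$-differential then becomes $\bar{\partial}$ on $B$ with values in $W\otimes \mathbf{H}^{\bullet,\bullet}(F)$, so Dolbeault's theorem on $B$ gives the desired formula for $E_{2}$.

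Finally, convergence and the multigrading conventions. Since $F$ is compact, the filtration $L^{\bullet}$ is bounded (base-degree lies in $[0,2\dim_{\mathbb{R}}B]$), hence the spectral sequence is regular and converges to the cohomology of the total complex $H(E,\tilde{W})$; tracking the bigrading $(p,q)$ through the construction shows that the induced filtration on $H^{p,q}(E,\tilde{W})$ has graded pieces $^{p,q}E^{s,t}_{\infty}$ for $s+t=p+q$, as asserted. The main technical obstacle is the second step: one must verify \emph{functorially} that the pullback-bundle structure of $\tilde{W}$ interacts correctly with the Borel filtration so that, after taking cohomology along the fibre, the coefficient system becomes the honest tensor product $W\otimes \mathbf{H}^{\bullet,\bullet}(F)$ rather than a twisted version. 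This is exactly where the hypothesis that $G$ acts trivially on $H_{\bar{\partial}}(F)$ is used: it guarantees that the induced transition data on $\mathbf{H}^{\bullet,\bullet}(F)$ is holomorphic, and the projection formula for the pullback $\tilde{W}=\pi^{\ast}W$ makes the $W$-factor split off globally.
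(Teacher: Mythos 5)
The paper does not actually prove Theorem \ref{thm3.2}: it is quoted (up to an evident typo --- the coefficient bundle in (ii) should read $\mathbf{H}^{p-i,q-s+i}(F)$, as in Corollary \ref{cor3.3}) from Borel's Appendix Two to Hirzebruch's book \cite{Hir66}, so there is no in-paper argument to compare against. Your sketch reconstructs precisely Borel's route --- the filtration of $A^{\bullet,\bullet}(E,\pi^{\ast}W)$ by base-degree, fibrewise $\bar{\partial}$-cohomology on the $E_{1}$-page, and Dolbeault's theorem on $B$ with coefficients in $W\otimes\mathbf{H}^{\bullet,\bullet}(F)$ for $E_{2}$ --- and you correctly isolate where the hypothesis on $G$ enters (via Lemma \ref{lem3.1}, to make $\mathbf{H}^{\bullet,\bullet}(F)$ an honest holomorphic bundle) and why the pullback structure of $\tilde{W}$ lets the $W$-factor split off. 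The only points deserving more care than you give them are the $E_{1}$ identification, which rests on compactness of $F$ and ellipticity of its Dolbeault complex (finite-dimensionality and splitting of the fibrewise cohomology), not merely on a partition of unity, and the bound on the base-degree, which is $\dim_{\mathbb{R}}B$ rather than $2\dim_{\mathbb{R}}B$; neither affects the validity of the argument.
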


Especially, as a corollary, we have the following result (cf. \cite[Appendix Two, 7.1]{Hir66}).
\begin{cor}\label{cor3.3}
If the vector bundle $\mathbf{H}_{\bar{\partial}}(F)$ is trivial, in particular, if the structure group of $\xi$ is connected,
then we have
$$
^{p,q}E^{s,t}_{2}\cong\sum_{i\geq0}H^{i,s-i}(B,W)\otimes H^{p-i,q-s+i}_{\bar{\partial}}(F).
$$
\end{cor}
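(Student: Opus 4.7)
The plan is to deduce this corollary directly from Theorem \ref{thm3.2}, whose $E_2$-term is already identified as
$$
{}^{p,q}E^{s,t}_{2}\cong \sum_{i\ge 0} H^{i,s-i}\bigl(B,\, W\otimes \mathbf{H}^{p-i,q-s+i}(F)\bigr).
$$
Under the hypothesis, the coefficient bundle $\mathbf{H}^{p-i,q-s+i}(F)$ simplifies dramatically, and the corollary reduces to unwinding the tensor product and pulling a constant vector space out of the cohomology.

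The first step is to settle the \emph{in particular} clause. Recall that $\mathbf{H}_{\bar{\partial}}(F)$ is built from the transition cocycle $\{f^{0}_{\alpha\beta}=\varphi^{0}\circ f_{\alpha\beta}\}$, where $\varphi^{0}:G\to \mathrm{GL}(H_{\bar{\partial}}(F))$ is the induced representation. Lemma \ref{lem3.1} already requires that $\varphi^{0}$ be locally constant on $G$. If in addition $G$ is connected, then $\varphi^{0}$ must be globally constant, hence equal to $\varphi^{0}(e)=\mathrm{id}$; the cocycle $\{f^{0}_{\alpha\beta}\}$ is then trivial, so $\mathbf{H}_{\bar{\partial}}(F)$ is the trivial holomorphic bundle $B\times H_{\bar{\partial}}(F)$. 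Thus the connected case is subsumed into the general triviality hypothesis.

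The second step is the identification of cohomology with a trivial coefficient factor. Choosing a $\mathbb{C}$-basis $\{\mathbf{e}_{1},\dots,\mathbf{e}_{N}\}$ of $H^{p-i,q-s+i}_{\bar{\partial}}(F)$, triviality of $\mathbf{H}^{p-i,q-s+i}(F)$ gives a holomorphic isomorphism
$$
W\otimes \mathbf{H}^{p-i,q-s+i}(F)\;\cong\; W\otimes_{\mathbb{C}} H^{p-i,q-s+i}_{\bar{\partial}}(F)\;\cong\;\bigoplus_{\ell=1}^{N} W\cdot \mathbf{e}_{\ell},
$$
so that applying $H^{i,s-i}(B,-)$ commutes with the finite direct sum and extracts the vector space $H^{p-i,q-s+i}_{\bar{\partial}}(F)$ as an external tensor factor:
$$
H^{i,s-i}\bigl(B,\, W\otimes \mathbf{H}^{p-i,q-s+i}(F)\bigr)\;\cong\; H^{i,s-i}(B,W)\otimes_{\mathbb{C}} H^{p-i,q-s+i}_{\bar{\partial}}(F).
$$
Summing over $i\ge 0$ and substituting into the formula furnished by Theorem \ref{thm3.2} yields the claimed isomorphism.

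There is essentially no obstacle here beyond being careful with bookkeeping: the whole statement is a bundle-theoretic specialization of Theorem \ref{thm3.2}. The only mildly delicate point is ensuring that the isomorphism $W\otimes \mathbf{H}^{\ast,\ast}(F)\cong W\otimes_{\mathbb{C}} H^{\ast,\ast}_{\bar{\partial}}(F)$ is genuinely holomorphic (not merely smooth), which is why the triviality must be as a holomorphic bundle rather than merely as a complex vector bundle; the argument above secures this, either by hypothesis or, in the connected case, through the vanishing of the cocycle $\{f^0_{\alpha\beta}\}$ guaranteed by Lemma \ref{lem3.1}.
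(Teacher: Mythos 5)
Your argument is correct and is essentially the intended one: the paper gives no proof of this corollary at all (it is quoted from Hirzebruch, Appendix Two, 7.1), and the expected deduction is exactly what you do --- under triviality of the coefficient bundle, $W\otimes \mathbf{H}^{p-i,q-s+i}(F)\cong W^{\oplus N}$ holomorphically, so the fibre cohomology pulls out of $H^{i,s-i}(B,-)$ as a constant tensor factor in the $E_2$-term of Theorem \ref{thm3.2}, while the connected-group case reduces to triviality of the cocycle $\{f^{0}_{\alpha\beta}=\varphi^{0}\circ f_{\alpha\beta}\}$ since a locally constant homomorphism on a connected group is identically $\varphi^{0}(e)=\mathrm{id}$. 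Two small points worth making explicit: you tacitly read ``$\mathbf{H}_{\bar{\partial}}(F)$ trivial'' as triviality of each bigraded summand $\mathbf{H}^{p,q}(F)$ (the natural reading, since the cocycle preserves the bigrading, and automatic in the connected case), and you silently correct the index typo $q-s+1$ to $q-s+i$ in the statement of Theorem \ref{thm3.2}(ii), which is indeed required for consistency with the corollary and with \eqref{hat-e2}.
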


\section{Blow-up formula re-examined}\label{app-2}
The purpose of this appendix is to sketch a second proof of Theorem 1.2 by the same philosophy as in \cite{Ste18}.

\subsection{Morphism of cohomology induced by pullback of differential forms}

Suppose that $f:X\rightarrow Y$ is a holomorphic map of complex manifolds.
Then there exists a natural morphism
$\alpha:f^{-1}\Omega^{p}_{Y}\rightarrow\Omega^{p}_{X}$ of $f^{-1}\mathcal{O}_{Y}$-modules,
determined by the pullback of holomorphic $p$-forms.
It is first defined by a morphism of presheaves
$$
\lim_{f(U)\subset V}\Omega_{Y}^{p}(V)
\stackrel{\mu}\longrightarrow
\lim_{f(U)\subset V} \Omega_{X}^{p}(f^{-1}(V))
\stackrel{\nu}\longrightarrow
\Omega_{X}^{p}(U)
$$
for any open sets $U\subset X$ and $V\subset Y$.
Here the first step is the limit of the pullbacks $\mu=\lim_{f(U)\subset V}f^{\ast}|_{f^{-1}(V)}$
and $\nu$ is the restriction.
Note that
$f_{\ast}\Omega_{X}^{p}(V)= \Omega_{X}^{p}(f^{-1}(V))$.
By the uniqueness of sheafification,
we obtain a composition morphism of $f^{-1}\mathcal{O}_{Y}$-modules
\begin{equation}\label{topolpullbackmorph}
\alpha:
f^{-1}\Omega_{Y}^{p}
\longrightarrow
f^{-1}f_{\ast}\Omega_{X}^{p}
\longrightarrow
\Omega_{X}^{p}.
\end{equation}

Because $f^{-1}$ is the left adjoint of $Rf_{\ast}$,
there is a functorial morphism
\begin{equation}\label{funct-morph}
\mathcal{E}\longrightarrow Rf_{\ast}f^{-1}\mathcal{E}
\end{equation}
for any $\mathcal{E}\in \mathrm{D}^{b}(\mathcal{O}_{Y})$ the bounded derived category of $\mathcal{O}_{Y}$-modules (cf. \cite[Proposition 2.6.4]{KS94}).
Combining \eqref{topolpullbackmorph}
with the functorial morphism \eqref{funct-morph} for $\Omega_{Y}^{p}$ and $f_{\ast}\Omega_{X}^{p}$ leads to
the following morphisms:
\begin{equation}\label{real-pullback}
\Omega_{Y}^{p}
\longrightarrow
Rf_{\ast}f^{-1}\Omega_{Y}^{p}
\longrightarrow
Rf_{\ast}\Omega_{X}^{p}
\end{equation}
and
\begin{equation}\label{real-edge-morph}
f_{\ast}\Omega_{X}^{p}
\longrightarrow
Rf_{\ast}f^{-1}f_{\ast}\Omega_{X}^{p}
\longrightarrow
Rf_{\ast}\Omega_{X}^{p}
\end{equation}
in $\mathrm{D}^{b}(\mathcal{O}_{Y})$.

Note that there exists a natural morphism
$f^{\ast}: \Omega_{Y}^{p}\rightarrow f_{\ast} \Omega_{X}^{p}$
induced by pullback of differential forms.
By \eqref{topolpullbackmorph}-\eqref{real-edge-morph},
we get a commutative diagram
\begin{equation*}\label{compare}
\xymatrix@C=0.5cm{
f^{\ast}: &H^{l}(Y, \Omega_{Y}^{p}) \ar[d]_{f^{\ast}} \ar[r]  &
H^{l}(X, f^{-1}\Omega_{Y}^{p}) \ar[d]_{f^{\ast}}  \ar[r]^{\alpha}& H^{l}(X, \Omega_{X}^{p}) \ar[d]_{=} \\
f^{\dag}: & H^{l}(Y, f_{\ast}\Omega_{X}^{p}) \ar[r] & H^{l}(X, f^{-1}f_{\ast}\Omega_{X}^{p}) \ar[r] & H^{l}(X, \Omega_{X}^{p}). &}
\end{equation*}

By the construction, it is important to notice that $f^{\ast}: H^{l}(Y, \Omega_{Y}^{p})
\rightarrow
H^{l}(X, \Omega_{X}^{p})$
is the morphism induced by pullback of differential forms.
Moreover, the morphism $f^{\dag}$ is the edge morphism of the Leray spectral sequence for $\Omega_{X}^{p}$ under the morphism $f$ (cf. \cite[(13.8) Theorem of Chapter IV]{Dem12}). The bundle-valued case can be dealt with similarly.

\subsection{Sketch of proof}
Note that the morphisms
$
\pi^{\ast}:H^{p,q}(X,W)\rightarrow H^{p,q}(\tilde{X},\tilde{W})
$
and
$
\varpi^{\ast}:H^{p,q}(Z,\imath^{\ast}W)
\rightarrow
H^{p,q}(E,\jmath^{\ast}\tilde{W})
$
are injective because of \cite[Theorem 3.1]{Wel74} and Lemma \ref{dolb-projective-formula}.
The blow-up diagram \eqref{blowup-diag3} induces a commutative diagram of short exact sequences
\begin{equation}\label{comm-coker-C}
\xymatrix@C=0.5cm{
0 \ar[r]^{} & H^{p,q}(X,W)
\ar[d]_{\imath^{\ast}}
\ar[r]^{\pi^{\ast}} &
H^{p,q}(\tilde{X},\tilde{W})
\ar[d]_{\jmath^{\ast}}
\ar[r]^{} &
\mathrm{coker}\,(\pi^{\ast})
\ar[d]_{\bar{\jmath}^{\ast}} \ar[r]^{} & 0 \\
0 \ar[r] &
H^{p,q}(Z,\imath^{\ast}W)
\ar[r]^{\varpi^{\ast}} &
H^{p,q}(E,\jmath^{\ast}\tilde{W})
\ar[r]^{} &
\mathrm{coker}\,(\varpi^{\ast})\ar[r]^{} & 0.}
\end{equation}
Like \eqref{comm-coker},
to prove the blow-up formula, it remains to verify that $\bar{\jmath}^{\ast}$ is isomorphic.

Consider the sheaves $\Omega^{p}_{\tilde{X}}(\tilde{W})$
and $\Omega^{p}_{E}(\jmath^{\ast}\tilde{W})$ on $\tilde{X}$ and $E$, respectively.
We have the associated Leray spectral sequences
$\{E_{k}(\tilde{W}),d_{k}\}$
and
$\{E_{k}(\jmath^{\ast}\tilde{W}),d^{\prime}_{k}\}$
with the second terms
$$
E^{l,s}_{2}(\tilde{W})=
H^{l}(X,R^{s}\pi_{\ast}\Omega^{p}_{\tilde{X}}(\tilde{W}))
\Longrightarrow
\mathrm{Gr}\,
H^{p,l+s}(\tilde{X},\tilde{W})
$$
and
$$
E^{l,s}_{2}(\jmath^{\ast}\tilde{W})
=H^{l}(Z,R^{s}\varpi_{\ast}\Omega^{p}_{E}(\jmath^{\ast}\tilde{W}))
\Longrightarrow
\mathrm{Gr}\,H^{p,l+s}(E,\jmath^{\ast}\tilde{W}).
$$
Here the filtration of $H^{p,l+s}(-)$ is defined by setting
$$
H^{p,l+s}(-)=F^{0}H^{p,l+s}(-)\supset
F^{1}H^{p,l+s}(-)\supset\cdots\supset
F^{l+s}H^{p,l+s}(-)\supset F^{l+s+1}H^{p,l+s}(-)=0
$$
and the associated graded space is defined by
$$
\mathrm{Gr}^{k}(-)=\frac{F^{k}(-)}{F^{k+1}(-)}.
$$
From Lemma \ref{key-lem-1} we have
$$
E^{l,0}_{2}(\tilde{W})
=H^{l}(X,\pi_{\ast}\Omega^{p}_{\tilde{X}}(\tilde{W}))
\cong
H^{l}(X,\Omega^{p}_{X}(W))
$$
and
$$
E^{l,0}_{2}(\jmath^{\ast}\tilde{W})
=H^{l}(Z,\varpi_{\ast}\Omega^{p}_{E}(\jmath^{\ast}\tilde{W}))
\cong H^{l}(Z,\Omega^{p}_{Z}(\imath^{\ast}W)).
$$
On one hand, the Dolbeault theorem implies
$$
H^{l}(\tilde{X},\Omega^{p}_{\tilde{X}}(\tilde{W}))\cong H^{p,l}(\tilde{X},\tilde{W})
\,\,\,\mathrm{and}\,\,\,
H^{l}(X,\Omega^{p}_{X}(W))
\cong
H^{p,l}(X,W).
$$
On the other hand, the pullback of the differential forms determines a morphism
$$
\pi^{\ast}:
H^{p,l}(X,W)
\rightarrow
H^{p,l}(\tilde{X},\tilde{W}).
$$
So one can have the following commutative diagram of abelian groups
$$
\xymatrix@R=0.4cm@C=0.3cm{
&E^{l,0}_{\infty}(\tilde{W})\ar@{^{(}->}[dr]& \\
H^{l}(X,\Omega^{p}_{X}(W))
=E^{l,0}_{2}(\tilde{W})
\ar@{->>}[ru]\ar[d]_{\cong}^{\mathrm{Dolbeault}}
\ar[rr]^{\pi^{\dag}}& &H^{l}(\tilde{X},\Omega^{p}_{\tilde{X}}(\tilde{W}))
\ar[d]_{\cong}^{\mathrm{Dolbeault}}\\
H^{p,l}(X,W)\ar[rr]^{\pi^*}&&
H^{p,l}(\tilde X,\tilde W)}
$$
where $\pi^{\dag}$ is the edge morphism of $E^{l,0}_{2}(\tilde{W})$.
This implies that the differentials for $E(\tilde{W})$ with target of degree $(l,0)$ are zero.
Likewise, we can show that the differentials for
$E(\jmath^{\ast}\tilde W)$ with target of degree $(l,0)$ are also zero.
In particular, we have an exact sequence
\begin{equation}\label{exact-seq}
\xymatrix@C=0,5cm{
  0 \ar[r] & E^{q,0}_{\infty}(\tilde{W})
  \ar[r]^{\subseteq\quad\quad} & H^{q}(\tilde{X},\Omega^{p}_{\tilde{X}}(\tilde{W}))
  \ar[r]^{} &
  H^{q}(\tilde{X},\Omega^{p}_{\tilde{X}}(\tilde{W}))
  /E^{q,0}_{\infty}(\tilde{W})
   \ar[r] & 0. }
\end{equation}
By definition, there hold the equalities
$$
E^{q,0}_{\infty}(\tilde{W})=
\frac{F^{q}
H^{q}(\tilde{X},\Omega^{p}_{\tilde{X}}(\tilde{W}))}
{F^{q+1}
H^{q}(\tilde{X},\Omega^{p}_{\tilde{X}}(\tilde{W}))}
=F^{q}
H^{q}(\tilde{X},\Omega^{p}_{\tilde{X}}(\tilde{W}))
\cong
H^{p,q}({X},{W})
$$
and
$$
F^{0}
H^{q}(\tilde{X},\Omega^{p}_{\tilde{X}}(\tilde{W}))
=
H^{q}(\tilde{X},\Omega^{p}_{\tilde{X}}(\tilde{W}))
\cong
H^{p,q}(\tilde{X},\tilde{W}).
$$
So the exact sequence \eqref{exact-seq} becomes
$$
\xymatrix@C=0.5cm{
  0 \ar[r]^{} & F^{q}H^{p,q}(\tilde{X},\tilde{W})
  \ar[r]^{\subseteq} & F^{0}H^{p,q}(\tilde{X},\tilde{W}) \ar[r]^{} &
  F^{0}H^{p,q}(\tilde{X},\tilde{W})/
  F^{q}H^{p,q}(\tilde{X},\tilde{W}) \ar[r]^{} & 0,}
$$
which is equal to
$$
\xymatrix@C=0.5cm{
0 \ar[r]^{} & H^{p,q}(X,W)
\ar[r]^{\pi^{\ast}} &
H^{p,q}(\tilde{X},\tilde{W})
\ar[r]^{} &
\mathrm{coker}\,(\pi^{\ast})
\ar[r]^{} & 0.}
$$
Similarly, we can show that
$$
\xymatrix@C=0.5cm{
0 \ar[r] &
H^{p,q}(Z,\imath^{\ast}W)
\ar[r]^{\varpi^{\ast}} &
H^{p,q}(E,\jmath^{\ast}\tilde{W})
\ar[r]^{} &
\mathrm{coker}\,(\varpi^{\ast})\ar[r]^{} & 0}
$$
equals to
$$
\small{
\xymatrix@C=0.5cm{
  0 \ar[r]^{} & F^{q}H^{p,q}(E,\jmath^{\ast}\tilde{W})
  \ar[r]^{\subseteq} & F^{0}H^{p,q}(E,\jmath^{\ast}\tilde{W}) \ar[r]^{} &
  F^{0}H^{p,q}(E,\jmath^{\ast}\tilde{W})/
  F^{q}H^{p,q}(E,\jmath^{\ast}\tilde{W}) \ar[r]^{} & 0. }}
$$
Consequently, \eqref{comm-coker-C} is equivalent to
\begin{equation*}\label{spec-coker}
\xymatrix@C=0.4cm{
  0 \ar[r]^{} & F^{q}H^{p,q}(\tilde{X},\tilde{W})
  \ar[d]\ar[r]^{\subseteq} & F^{0}H^{p,q}(\tilde{X},\tilde{W}) \ar[d]\ar[r]^{} &\ar[d]_{\jmath^{\ast}}
  F^{0}H^{p,q}(\tilde{X},\tilde{W})/
  F^{q}H^{p,q}(\tilde{X},\tilde{W}) \ar[r]^{} & 0 \\
   0 \ar[r]^{} & F^{q}H^{p,q}(E,\jmath^{\ast}\tilde{W})
  \ar[r]^{\subseteq} & F^{0}H^{p,q}(E,\jmath^{\ast}\tilde{W}) \ar[r]^{} &
  F^{0}H^{p,q}(E,\jmath^{\ast}\tilde{W})/
  F^{q}H^{p,q}(E,\jmath^{\ast}\tilde{W}) \ar[r]^{} & 0.}
\end{equation*}
Moreover, due to the third isomorphism in Lemma \ref{key-lem-1} one gets
$$
\jmath^{\ast}:E^{l,s}_{2}(\tilde{W})\cong E^{l,s}_{2}(\jmath^{\ast}\tilde{W})
\,\,\mathrm{for\,\,any}\,\,s\geq1,
$$
and thus, for any $l$ with $q=l+s$, the following isomorphism holds:
$$
\jmath^{\ast}:\mathrm{Gr}^{l}
H^{p,q}(\tilde{X},\tilde{W})
\cong
\mathrm{Gr}^{l}
H^{p,q}(E,\jmath^{\ast}\tilde{W}).
$$
As a result, one has
$$
\jmath^{\ast}:
\frac{F^{0}
H^{p,q}(\tilde{X},\tilde{W})}
{F^{q}
H^{p,q}(\tilde{X},\tilde{W})}
\cong
\frac{F^{0}H^{p,q}(E,\jmath^{\ast}\tilde{W})}
{F^{q}H^{p,q}(E,\jmath^{\ast}\tilde{W})}
$$
and therefore $\bar{\jmath}^{\ast}$ in \eqref{comm-coker-C} is an isomorphism.
\begin{rem} It is easy to check that the isomorphisms \eqref{crd-iso}
\begin{equation*}
\pi^{\ast}: H^{q}(X, \K_{X,Z}^{p}(W)) \stackrel{\simeq}\longrightarrow H^{q}(\tilde{X}, \K_{\tilde{X},E}^{p}(\tilde{W}))
\end{equation*}
for all suitable $p,q$
are equivalent to the ones \eqref{comm-coker-C}
$$
\frac{H^{p,q}(\tilde{X},\tilde W)}{\pi^{*}H^{p,q}(X,W)}
\cong \frac{H^{p,q}(E,\jmath^*\tilde{W})}{\varpi^{*}H^{p,q}(Z,\imath^*{W})}
$$
by use of the commutative diagram
\begin{equation*}
\small{
\xymatrix@=0.5cm{
     \cdots\ar[r]&\ker\,(\bar{\pi}^{*}_{q}) \ar[d]\ar[r] & 0  \ar[d]_{}\ar[r] & 0\ar[d]_{} \ar[r] &  \ker\,(\bar{\pi}^{*}_{q+1}) \ar[d]\ar[r] & \cdots \\
   \cdots \ar[r]^{} & H^{q}(X,\mathscr{K}^{p}_{X,Z}(W)) \ar[d]_{\bar{\pi}^{\ast}_{q}} \ar[r]^{}&
   H^{p,q}(X,W) \ar[d]_{\pi^{\ast}_{q}} \ar[r]^{\imath^{\ast}}
   & H^{p,q}(Z,\imath^{*}W) \ar[d]_{\varpi^{\ast}_{q}} \ar[r]^{\delta\quad} &H^{q+1}(X,\mathscr{K}^{p}_{X,Z}(W)) \ar[d]_{\bar{\pi}^{\ast}_{q+1}} \ar[r]^{}
   & \cdots \\
   \cdots \ar[r] & H^{q}(\tilde{X}, \mathscr{K}^{p}_{\tilde{X},E}(\tilde{W}))
   \ar[d]\ar[r]^{} &
   H^{p,q}(\tilde{X},\tilde{W})\ar[d] \ar[r]^{\jmath^{\ast}} &
   H^{p,q}(E,\jmath^{*}\tilde{W})\ar[d] \ar[r]^{\tilde{\delta}\quad}&
  H^{q+1}(\tilde{X}, \mathscr{K}^{p}_{\tilde{X},E}(\tilde{W}))  \ar[d]\ar[r]^{} &\cdots\\
  \cdots \ar[r] & \mathrm{coker}\,(\bar{\pi}^{*}_{q})
 \ar[d]\ar[r]^{}  &
  \mathrm{coker}\,(\pi^{*}_{q})\ar[d]\ar[r]
  &
  \mathrm{coker}\,(\varpi^{*}_{q})
  \ar[d]\ar[r]^{\hat{\delta}\quad}
  &\mathrm{coker}\,(\bar{\pi}^{*}_{q+1})
  \ar[r]^{} \ar[d]&\cdots\\
  &0  &0  &0  &0  }
  }
\end{equation*}
See also the simple Dolbeault case in \cite[Proof of Proposition 3.4 and Remark 3.6]{RYY}$_{v4}$.
\end{rem}


\end{document}